\newcommand{\msc}[2][2000]{%
  \let\@oldtitle\@title%
  \gdef\@title{\@oldtitle\footnotetext{#1 \emph{Mathematics subject
        classification.} #2}}%
}
\theoremstyle{plain}
\newtheorem{theorem}{Theorem} [section]
\newtheorem{definition}[theorem]{Definition}
\newtheorem{lemma}[theorem]{Lemma}
\newtheorem{proposition}[theorem]{Proposition}
\newtheorem{hyp}[theorem]{Assumption}
\theoremstyle{remark}
\newtheorem{remark}[theorem]{Remark}
\def\R{{\mathbb R}}
\def\N{{\mathbb N}}
\def\Sch{{\mathcal S}}
\def\O{\mathcal O}
\def\dd{\mathrm d}
\def\({\left(}
\def\){\right)}
\def\<{\left\langle}
\def\>{\right\rangle}
\def\le{\leqslant}
\def\ge{\geqslant}
\def\Eq#1#2{\mathop{\sim}\limits_{#1\rightarrow#2}}
\def\Tend#1#2{\mathop{\longrightarrow}\limits_{#1\rightarrow#2}}
\def\d{{\partial}}
\def\eps{\varepsilon}
\def\si{{\sigma}}
\DeclareMathOperator{\DIV}{div}
\numberwithin{equation}{section}
\begin{document}

\title[Rigidity in isothermal fluids]{Rigidity results in
  generalized isothermal fluids}

\author[R. Carles]{R\'emi Carles}
\author[K. Carrapatoso]{Kleber Carrapatoso}
\author[M. Hillairet]{Matthieu Hillairet}

\address{Institut Montpelli\'erain
  Alexander Grothendieck\\ CNRS\\ Univ. Montpellier\\France}
\email{Remi.Carles@math.cnrs.fr}
\email{Kleber.Carrapatoso@umontpellier.fr}
\email{Matthieu.Hillairet@umontpellier.fr}
\begin{abstract}
  We investigate the long-time behavior of solutions to 
  the isothermal Euler, Korteweg or quantum Navier Stokes equations,
  as well as generalizations of these equations where the convex
  pressure law is asymptotically linear near vacuum. By writing
  the system with a 
  suitable time-dependent scaling 
  we prove that the densities of global solutions display universal
  dispersion rate and asymptotic profile. This result applies to  weak
  solutions  defined in an appropriate way. In the exactly isothermal
  case, we establish
   the compactness of bounded sets of  such weak solutions, by
   introducing modified entropies adapted to the new unknown
   functions. 
\end{abstract}

\thanks{This work was partially supported by the EFI project ANR-17-CE40-0030 and the Dyficolti project ANR-13-BS01-0003-01 of the French National Research Agency (ANR)}

\maketitle


\section{Introduction}
\label{sec:intro}

In the isentropic case $\gamma>1$, the Euler equation on $\R^d$, $d\ge 1$,
\begin{equation}\label{eq:euler-isent}
  \left\{
    \begin{aligned}
 & \d_t \rho+\DIV\(\rho u\)=0,\\
     &\d_t (\rho u) +\DIV(\rho u\otimes u)+\nabla
        \(\rho^\gamma\)= 0,
    \end{aligned}
\right.
\end{equation}
enjoys the formal conservations of mass,
\begin{equation*}
  M(t) =\int_{\R^d}\rho(t,x)dx\equiv M(0),
\end{equation*}
and entropy (or energy),
\begin{equation*}
  E(t) = \frac{1}{2}\int_{\R^d} \rho (t,x)
  |u(t,x)|^2 \dd x+\frac{1}{\gamma-1}\int_{\R^d}\rho(t,x)^\gamma
  \dd x  \equiv E(0). 
\end{equation*}
In general, smooth solutions are defined only locally in time (see
\cite{MUK86,JYC90,Xin98}). However, for some range of $\gamma$, if the
initial velocity has a 
special structure and the initial density is sufficiently small, the
classical solution is defined globally in time. In addition
the large time behavior of the solution can be described rather
precisely, as established in    \cite{Serre97}. We restate some
results from \cite{Serre97} in the following theorem:
\begin{theorem}[From \cite{Serre97}]\label{theo:serre}
  Let $1<\gamma\le 1+2/d$ and $s>d/2+1$. There exists $\eta>0$ such
  that the following holds.\\
$(i)$ If $\rho_0,u_0\in H^s(\R^d)$ are such that
  $\|(\rho_0^{(\gamma-1)/2},u_0)\|_{H^s(\R^d)}\le \eta $, 
  then the system \eqref{eq:euler-isent} with initial data $\rho(0,x)
  = \rho_0(x)$ and $u(0,x) = x + u_0(x)$ admits a unique global
  solution, in the sense that $(\rho,\tilde u)\in
      C([0,\infty);H^s(\R^d))$, where $\tilde
      u(t,x)=u(t,x)-\frac{x}{1+t}$. In addition, there exists $R_\infty,U_\infty\in
  H^s(\R^d)$ such that
 \begin{equation}\label{eq:tempsgrandSerre}
    \left\| \(\rho (t,x)-\frac{1}{t^d}R_\infty\(\frac{x}{t}\),u(t,x)
      -\frac{x}{1+t}-\frac{1}{1+t}U_\infty
      \(\frac{x}{t}\)\)\right\|_{L^\infty(\R^d)}\Tend t \infty 0. 
  \end{equation}
$(ii)$ Conversely, if $R_\infty,U_\infty\in H^s(\R^d)$ are such
  that 
  $\|(R_\infty^{(\gamma-1)/2},U_\infty)\|_{H^s(\R^d)}\le \eta$, then
there exists $\rho_0,u_0\in H^s(\R^d)$ such that the solution to
\eqref{eq:euler-isent} with $ \rho(0,x)=\rho_0(x)$ and 
 $ u(0,x) =    x+u_0(x)$ is global in time in the same sense as above,
 and \eqref{eq:tempsgrandSerre} holds. 
\end{theorem}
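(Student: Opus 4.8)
The statement collects results of \cite{Serre97}; here is the approach I would follow.

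\emph{Reduction to a damped symmetric system in self-similar variables.} The first step is to remove the vacuum singularity by passing to the unknown $c:=\tfrac{2\sqrt{\gamma}}{\gamma-1}\rho^{(\gamma-1)/2}$, in terms of which \eqref{eq:euler-isent} becomes the symmetric hyperbolic system
\begin{equation*}
  \partial_t c + u\cdot\nabla c + \tfrac{\gamma-1}{2}\,c\,\DIV u = 0,
  \qquad
  \partial_t u + (u\cdot\nabla)u + \tfrac{\gamma-1}{2}\,c\,\nabla c = 0
\end{equation*}
(symmetrizer the identity). The second step is to introduce $\tau:=\ln(1+t)$, $y:=x/(1+t)$ together with the scaling $c(t,x)=:(1+t)^{-q}\Sigma(\tau,y)$, $u(t,x)=:\tfrac{x}{1+t}+(1+t)^{-q}V(\tau,y)$ with $q:=\tfrac{d(\gamma-1)}{2}$. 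A direct computation gives
\begin{equation*}
  \partial_\tau\Sigma + e^{-q\tau}\Big(V\cdot\nabla\Sigma+\tfrac{\gamma-1}{2}\,\Sigma\,\DIV V\Big)=0,
  \qquad
  \partial_\tau V+(1-q)V+ e^{-q\tau}\Big((V\cdot\nabla)V+\tfrac{\gamma-1}{2}\,\Sigma\,\nabla\Sigma\Big)=0,
\end{equation*}
again symmetric hyperbolic with the identity as symmetrizer. Two structural facts drive the whole proof: every term carrying a space derivative comes with the weight $e^{-q\tau}$, which is integrable on $[0,\infty)$ \emph{precisely because $\gamma>1$}; and the damping coefficient $1-q=1-\tfrac{d(\gamma-1)}{2}$ is nonnegative \emph{precisely because $\gamma\le 1+2/d$}.

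\emph{Global existence (part $(i)$).} Local well-posedness of the rescaled system in $C([\tau_0,\tau_0+T);H^s(\R^d))$, $s>d/2+1$, is classical. For the a priori estimate I would run the usual $H^s$ energy argument with the above symmetrizer: by symmetry the highest-order pressure-coupling terms cancel, the damping contributes $-2(1-q)\|V(\tau)\|_{H^s}^2\le 0$ and is dropped, and the remaining terms carry a factor $e^{-q\tau}$ and are bounded by Moser and Kato--Ponce inequalities. With $\mathcal E(\tau):=\|\Sigma(\tau)\|_{H^s}^2+\|V(\tau)\|_{H^s}^2$ this yields
\begin{equation*}
  \frac{d}{d\tau}\mathcal E(\tau)\le C\,e^{-q\tau}\,\mathcal E(\tau)^{3/2},
  \qquad\text{hence}\qquad
  \mathcal E(\tau)^{-1/2}\ge \mathcal E(0)^{-1/2}-\frac{C}{2q}\quad(\tau\ge 0),
\end{equation*}
so for $\eta$ small $\mathcal E(\tau)\le 4\mathcal E(0)$ for all $\tau$, and the continuation criterion gives a global solution; reverting the change of variables produces the global solution in the stated class.

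\emph{Asymptotics and the final-state problem.} From the equations $\|\partial_\tau\Sigma(\tau)\|_{H^{s-1}}=O(e^{-q\tau})$, so $\Sigma(\tau)$ converges in $H^{s-1}$ --- hence, by interpolation with the uniform $H^s$ bound, in $H^{s'}$ for $d/2<s'<s$ and so in $L^\infty$ --- to a limit giving $R_\infty$. For the velocity one studies $W:=(1+t)^{1-q}V=e^{(1-q)\tau}V$, which solves $\partial_\tau W=-e^{(1-2q)\tau}\big((V\cdot\nabla)V+\tfrac{\gamma-1}{2}\Sigma\nabla\Sigma\big)$: the first term is $O(e^{-\tau})$ thanks to the decay of $V$ induced by the damping, and for $q>1/2$ the second is $O(e^{(1-2q)\tau})$, so $W(\tau)$ converges and yields $U_\infty$, while for $q\le 1/2$ one checks $(1+t)^{-q}V\to 0$ in $L^\infty$ and takes $U_\infty=0$; reverting the scaling and absorbing the discrepancies between $x/t$, $x/(1+t)$ and between $t^{-d}$, $(1+t)^{-d}$ (which tend to $0$ in $L^\infty$ by the uniform continuity and decay of $R_\infty,U_\infty$) gives \eqref{eq:tempsgrandSerre}. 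For part $(ii)$, given $(R_\infty,U_\infty)$ small I would solve, by a contraction based on the integral equation anchored at $\tau=+\infty$, the rescaled system on a half-line $[\tau_\ast,\infty)$ with the prescribed behavior at infinity --- the argument closes exactly because the nonlinearity carries $e^{-q\tau}$ --- obtain a solution small in $C([\tau_\ast,\infty);H^s)$, extend it to $[0,\infty)$ by the forward theory, and read off $(\rho_0,u_0)$ at $\tau=0$; uniqueness then identifies it with the global solution.

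\emph{Main obstacle.} The delicate point is the $H^s$ estimate for the coupled system: treated as a forcing, the pressure coupling would cost one derivative, so one must use the symmetric-hyperbolic structure to cancel the top-order terms --- which is exactly why one works with $c\propto\rho^{(\gamma-1)/2}$. Everything then rests on the two signs hard-wired into the hypotheses, namely $q>0$ (from $\gamma>1$), which makes the nonlinear terms integrable in $\tau$ and fails in the isothermal case, and $1-q\ge 0$ (from $\gamma\le 1+2/d$), which lets the damping term be discarded rather than fought.
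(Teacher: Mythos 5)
Your proposal is correct and proceeds along the same lines as the paper's Appendix~A, but in a different --- and, I would say, cleaner --- choice of self-similar variables. The paper compactifies time via $\sigma=t/(1+t)\in[0,1)$ and keeps the rescaled velocity $U$ unrescaled, which produces the symmetric system \eqref{eq:euler-isent4} whose symmetrizer $S(\sigma)\propto(1-\sigma)^{d+2-d\gamma}\mathrm{I}_d$ degenerates as $\sigma\to1$; the two hypotheses then enter implicitly through the sign of the exponent and the convergence of $\int_0^1(1-s)^{d\gamma/2-1-d/2}\,\dd s$. You instead take $\tau=\ln(1+t)$ and insert the extra factor $(1+t)^{-q}$ (with $q=d(\gamma-1)/2$) into both rescaled unknowns; this normalizes the symmetrizer to the identity (your constant in $c$ is chosen precisely to make the two off-diagonal coefficients equal to $\tfrac{\gamma-1}{2}$) and trades the degeneracy for a single integrable weight $e^{-q\tau}$ in front of every spatial derivative plus an explicit damping $(1-q)V$. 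The two pictures are related by $1-\sigma=e^{-\tau}$ and $V=e^{(q-1)\tau}U$, under which your energy $\mathcal E(\tau)=\|\Sigma\|_{H^s}^2+\|V\|_{H^s}^2$ is, up to constants, the paper's weighted functional $F_m(\sigma)$ expressed in the new coordinates, and the paper's differential inequality becomes your $\dot{\mathcal E}\lesssim e^{-q\tau}\mathcal E^{3/2}$. What your presentation buys is that the roles of the two hypotheses are laid bare: $\gamma>1$ gives $q>0$, hence $e^{-q\tau}\in L^1(0,\infty)$, while $\gamma\le1+2/d$ gives $1-q\ge0$, so the damping can be dropped; it also makes the discussion of the asymptotic state $U_\infty$ sharper (in particular the threshold $q>1/2$, i.e.\ $\gamma>1+1/d$, below which one simply takes $U_\infty=0$, which is consistent with the weak $L^\infty$ conclusion in \eqref{eq:tempsgrandSerre} since $(1+t)^{-1}U$ is $O((1+t)^{-1})$ regardless). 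The final-state step you sketch by a contraction anchored at $\tau=\infty$ corresponds to the paper solving backward from $\sigma=1$. One point to be careful about in a full write-up is the claim that the first term in $\partial_\tau W$ is $O(e^{-\tau})$: this requires a quantitative decay of $V$ that does \emph{not} follow from merely dropping the damping in the a~priori estimate; you need to re-insert $(1-q)V$ (or bootstrap through the Duhamel formula) to obtain $\|V(\tau)\|_{H^s}\lesssim\max(e^{-(1-q)\tau},e^{-q\tau/2})$ before the cancellation closes.
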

In particular, in the frame of small data (in the sense described above), 
the dispersion  
\begin{equation*}
  \left\| \rho(t)\right\|_{L^\infty(\R^d)}\Eq t \infty \frac{\|R_\infty\|_{L^\infty(\R^d)}}{t^d}
\end{equation*}
is universal but the asymptotic profile $R_\infty$ can be arbitrary. Typically,
given any function $\psi\in \Sch(\R^d)$, $R_\infty= \epsilon \psi$ will
be allowed provided that $\epsilon>0$ is sufficiently small. For completeness
we provide a brief proof of the above theorem in appendix. 
\smallbreak 

We emphasize that the structure of the velocity is crucial: the
initial velocity is a small (decaying) perturbation of a linear
velocity. In a way, the above result is the Euler generalization of
the global existence results for the Burgers equation with expanding
data. Refinements of this result can be found in
\cite{GrSe97,Gra98,Serre2017}.  
\smallbreak


The isothermal Euler equation corresponds to the value $\gamma=1$ in
\eqref{eq:euler-isent}, 
\begin{equation} \label{eq:euler-isotherme}
  \left\{
    \begin{aligned}
 & \d_t \rho+\DIV\(\rho u\)=0,\\
     &\d_t (\rho u) +\DIV(\rho u \otimes u)+\kappa\nabla
        \rho= 0,\quad \kappa>0,
    \end{aligned}
\right.
\end{equation}
The mass is still formally conserved, and the energy now reads
\begin{equation*}
  E(t) = \frac{1}{2}\int_{\R^d} \rho (t,x)
  |u(t,x)|^2 \dd x+\int_{\R^d}\rho(t,x)\ln \rho(t,x)
   \dd x\equiv E(0). 
\end{equation*}
Unlike in the isentropic case, the energy has an indefinite sign, a
property which causes many technical problems. 
In this paper, we show that the isothermal Euler equation on $\R^d$,
$d\ge 1$, with asymptotically vanishing density, $\rho(t,\cdot)\in L^1(\R^d)$, 
displays a specific large time behavior, in the sense that if the
solution is global in time, then  the density disperses with a  rate different from the above one, and possesses a universal asymptotic Gaussian profile. This property remains when the convex pressure law $P(\rho)$ satisfies $P'(0)>0$, as well as  for the
Korteweg and quantum Navier-Stokes equations:
\begin{equation}\label{eq:NSK}
  \left\{
    \begin{aligned}
   & \d_t \rho+\DIV\(\rho u\)=0,\\
   &\d_t (\rho u) +\DIV (\rho u \otimes u) +
      \nabla P(\rho)= \frac{\eps^2}{2}\rho\nabla\( \frac{\Delta
        \sqrt{\rho}}{\sqrt{\rho}}\) + \nu \DIV \(\rho D u\),
    \end{aligned}
\right.
\end{equation}
with $\eps,\nu\ge 0$, where $Du$ denotes the symmetric part of the gradient,
\begin{equation*}
  Du :=  \frac{1}{2}\(\nabla u +\,^t\nabla u\).
\end{equation*}
For this system, we still have conservation of mass and the energy
 \begin{equation}\label{eq:energy-init}
    E(t) = \frac{1}{2}\int \rho(t,x)|u(t,x)|^2 \dd x+\frac{\eps^2}{2} \int |\nabla
    \sqrt{\rho(t,x)}|^2 \dd x+\int F(\rho(t,x)) \dd x,
  \end{equation}
where
\begin{equation*}
  F(\rho) = \rho\int_1^\rho \frac{P(r)}{r^2} \dd r,
\end{equation*}
satisfies
\begin{equation*}
  \dot E(t) =- \nu \int \rho |Du|^2.
\end{equation*}

In the case $\varepsilon =0$ and  $P(\rho)= \kappa \rho,$ equation
\eqref{eq:NSK}  is the precise system derived in \cite{Bru-Me10}, as a
correction to the isothermal quantum Euler equation. We emphasize
that, because of the lack of positivity of the term $F$ in the energy
functional, only the barotropic variant -- where $P(\rho) = \kappa
\rho^{\gamma}$ with $\gamma >1$ --  is studied in references.
Classically, a Bohm potential (corresponding to the term multiplied by
$\varepsilon$ in \eqref{eq:NSK}) is also added, see
\cite{AS-comp,Gis-VV15,Jungel,VasseurYu} for instance. In the case
where the dissipation is absent ($\nu=0$), but with capillarity
($\eps>0$), we refer to \cite{BDD07,AnMa09,AnMa12,AuHa17}. 
\smallbreak

A loose statement of our main result reads (a more precise version is
provided in the  next section, see Theorem~\ref{theo:temps-long}): 
\begin{theorem} \label{thm_mainrough}
Let $(\rho,u)$ be a global weak solution to \eqref{eq:NSK} with initial density/velocity 
$(\rho_0,u_0)$ satisfying
\[
(1+|x|^2 +|u_0|^2)^{1/2} \sqrt{\rho_0}\in L^2(\mathbb R^d) .
\]
Then there exists a mapping $\tau : [0,\infty) \to [1,\infty)$ such that
\begin{align*}
& \tau(t) \Eq t \infty 2t \sqrt{P'(0) \ln(t)}, \\
&  \rho(t, x) \Eq t \infty \dfrac{\|\rho_0\|_{L^1}}{\pi^{d/2}}
  \dfrac{\exp(-|x|^2/\tau(t)^2)}{\tau(t)^{d}}. 
\end{align*}
\end{theorem}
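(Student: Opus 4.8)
\emph{Sketch of proof.} Write $\kappa:=P'(0)>0$ and $M:=\|\rho_0\|_{L^1(\R^d)}$. The whole argument rests on a time--dependent rescaling converting the expanding flow into a confined one. The plan is first to let $\tau$ be the increasing solution of the second order ODE $\ddot\tau\,\tau=2\kappa$ with $\tau(0)=1$, $\dot\tau(0)=0$ --- equivalently $\dot\tau^2=4\kappa\ln\tau$, whence $\tau(t)\sim 2t\sqrt{\kappa\ln t}$ as $t\to\infty$, the announced rate --- and to pass to new unknowns $(\varrho,v)$ through
\[
\rho(t,x)=\frac{1}{\tau(t)^d}\,\varrho\!\left(t,\frac{x}{\tau(t)}\right),\qquad
u(t,x)=\frac{\dot\tau(t)}{\tau(t)}\,x+\frac{1}{\tau(t)}\,v\!\left(t,\frac{x}{\tau(t)}\right),
\]
which preserves mass: $\int_{\R^d}\varrho(t,\cdot)=M$ for all $t$. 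A direct (if tedious) computation recasts \eqref{eq:NSK} in the variables $(t,y)$, $y=x/\tau$, as
\begin{align*}
&\partial_t\varrho+\frac{1}{\tau^2}\,\DIV_y(\varrho v)=0,\\
&\partial_t v+\frac{1}{\tau^2}(v\cdot\nabla_y)v+\kappa\,\nabla_y\ln\varrho+2\kappa\,y=\mathcal R(t,y),
\end{align*}
where $\mathcal R$ gathers the contribution of $P(\rho)-\kappa\rho=o(\rho)$ (negligible because $\rho(t,\cdot)\to0$) together with the capillary and viscous terms, all of which carry negative powers of $\tau$ or of $t$ and should tend to $0$. The normalization $\ddot\tau\,\tau=2\kappa$ is chosen precisely so that the only stationary state of the formal limit system (``$\tau=\infty$'', $v\equiv0$) with mass $M$ is the Gaussian $G_M(y):=\pi^{-d/2}M\,e^{-|y|^2}$, since $\kappa\,\nabla_y\ln G_M+2\kappa\,y=0$.

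Next I would rewrite the energy \eqref{eq:energy-init} --- nonincreasing, and conserved when $\nu=0$ --- in these variables. Using $\dot\tau^2=4\kappa\ln\tau$ and $\int F(\rho)\,\dd x=\kappa\int\varrho\ln\varrho\,\dd y-\kappa dM\ln\tau+o(1)$, this yields an identity of the form
\[
\kappa\,H(t)+\frac{1}{2\tau^2}\int\varrho|v|^2\,\dd y+\frac{\eps^2}{2\tau^2}\int|\nabla_y\sqrt\varrho|^2\,\dd y+2\kappa\ln\tau\Big(\int|y|^2\varrho\,\dd y-\tfrac d2 M\Big)=E(t)+o(1),
\]
where $H(t):=\int\varrho\ln(\varrho/G_M)\,\dd y\ge0$ is the relative entropy with respect to $G_M$. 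Since all left--hand terms but the last are nonnegative, this bounds $\int|y|^2\varrho-\tfrac d2 M$ from above by $C/\ln\tau$, and is the lever that will eventually pin the rescaled kinetic energy $\int\varrho|v|^2/\tau^2$ to $0$. The auxiliary a priori bounds that this rewriting needs (boundedness of the second moment and of $\int\varrho|v|^2/\tau^2$, smallness of $\mathcal R$, and, when $\nu>0$, the extra regularity required by the weak formulation) I would obtain by propagating the energy inequality together with the Bresch--Desjardins entropy, the hypothesis $(1+|x|^2+|u_0|^2)^{1/2}\sqrt{\rho_0}\in L^2$ guaranteeing the relevant weighted quantities are finite at $t=0$.

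The asymptotic profile would then come from a rigidity/compactness argument. Differentiating the second--moment identity $\frac{\dd}{\dd t}\int|y|^2\varrho=\frac{2}{\tau^2}\int y\cdot\varrho v$ once more and using the momentum equation shows that $\int|y|^2\varrho-\tfrac d2 M$ solves an (eventually overdamped) linear oscillator, damped by $2\dot\tau/\tau$ and forced by a multiple of $\tau^{-4}\int\varrho|v|^2$ plus vanishing errors; reinserting this into the energy identity forces the rescaled kinetic and capillary energies to tend to $0$ and $\int|y|^2\varrho\to\tfrac d2 M$. Combining this with the compactness of the family $\{\varrho(t,\cdot)\}_{t\ge0}$ --- obtained from the modified entropy designed for exactly this purpose --- any limit $\varrho(t_n,\cdot)\to\varrho_\infty$ along $t_n\to\infty$ must be stationary (the transport term in the continuity equation carries $\tau^{-2}\to0$) and satisfy $\kappa\nabla\ln\varrho_\infty+2\kappa y=0$ (from the momentum equation, $v$ no longer carrying kinetic energy); with $\int\varrho_\infty=M$ this forces $\varrho_\infty=G_M$. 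Uniqueness of the limit gives $\varrho(t,\cdot)\to G_M$ in $L^1(\R^d)$ (the uniform second--moment bound upgrading local to global convergence), which, undoing the scaling, is exactly
\[
\left\|\rho(t,\cdot)-\frac{\|\rho_0\|_{L^1}}{\pi^{d/2}}\,\frac{e^{-|\cdot|^2/\tau(t)^2}}{\tau(t)^d}\right\|_{L^1(\R^d)}\Tend t\infty 0,\qquad \tau(t)\sim 2t\sqrt{P'(0)\ln t}.
\]

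The principal obstacle is the one advertised in the abstract: because $F$, equivalently $\int\rho\ln\rho$, has no definite sign, the compactness of the rescaled densities $\{\varrho(t,\cdot)\}$ cannot be extracted from the energy bound, and producing it forces one to introduce a genuinely new coercive functional --- the ``modified entropy'' --- adapted to $\varrho$, controlling a weighted version of $\int\varrho|\ln\varrho|$ and absorbing the logarithmically large term $2\kappa\ln\tau(\int|y|^2\varrho-\tfrac d2 M)$. A pervasive secondary difficulty is that every estimate is critical: the rescaling multiplies small quantities by $\ln\tau\to\infty$, so each piece of $\mathcal R$ and each cross term (notably $\frac{\dot\tau}{\tau}\int y\cdot\varrho v$) must be shown to be $o(1)$, not merely $O(1)$ --- which is where the decay $\rho(t,\cdot)\to0$ and the weighted initial datum enter essentially.
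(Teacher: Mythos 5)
Your setup is exactly the paper's: the same ODE $\ddot\tau\,\tau=2\kappa$ and the same change of unknowns \eqref{eq:uvFluid}, and the idea of a positive pseudo-energy built from the relative entropy against the Gaussian is the one the paper uses. But the mechanism you propose for identifying the limit does not work as stated, and it is precisely where the paper's key technical device lies.

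You want to pass to the limit directly in the momentum equation to obtain $\kappa\nabla\ln\varrho_\infty + 2\kappa y = 0$ on the grounds that $v$ ``no longer carries kinetic energy''. The difficulty is a mismatch of time scales. Introducing the new time $s$ with $\frac{\dd s}{\dd t}=\frac{P'(0)}{\tau\dot\tau}$ (the one for which the Fokker--Planck structure emerges), the continuity equation becomes $\partial_s\bar R + \frac{\dot\tau}{P'(0)\tau}\DIV(\bar R\bar U)=0$ and the dissipation bound indeed kills the flux term in the limit, giving $\partial_s R_\infty=0$. But in the momentum equation the time derivative appears as $\frac{P'(0)}{\tau\dot\tau}\partial_s(\bar R\bar U)$; dividing through by $\frac{P'(0)}{\tau\dot\tau}$ to isolate $\partial_s(\bar R\bar U)$ makes the forcing terms $2\kappa yR + P'\nabla R$ blow up, while keeping the small prefactor means you cannot conclude anything about those terms either. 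There is no limit equation extracted this way. The paper's way around this is the crucial trick you miss: take $\partial_t$ of $\tau^2\partial_t R = -\DIV(RU)$ and substitute the momentum equation in divergence form, which eliminates $U$ and yields a second-order scalar equation for $R$ alone. In the $s$-variable this reads $\partial_s\bar R - \frac{2P'(0)}{\dot\tau^2}\partial_s\bar R + \frac{P'(0)}{\dot\tau^2}\partial_s^2\bar R = \mathcal L\bar R + \mathcal N_\alpha$, whose principal part is the Fokker--Planck operator $\mathcal L R = \Delta R + 2\DIV(yR)$; the remaining terms and $\mathcal N_\alpha$ are then shown to vanish term by term in $\mathcal D'$. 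The limit $R_\infty$ satisfies both $\partial_s R_\infty = \mathcal L R_\infty$ and $\partial_s R_\infty = 0$, hence $\mathcal L R_\infty = 0$, and the uniqueness of the stationary Fokker--Planck state of prescribed mass (via \cite{AMTU01}) gives $R_\infty=\Gamma$.

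Two smaller points. First, you conclude with strong $L^1$ convergence of $\varrho(t,\cdot)$ to $G_M$; the paper only establishes weak $L^1$ convergence for general data (it explicitly remarks that the $\sim$ in the statement is a weak limit), and strong convergence is only observed for the explicit Gaussian family. Second, the compactness you worry about is not the obstacle you describe: the pseudo-energy, together with the interpolation in Lemma~\ref{lem:aprioriE}, already controls $\int R(1+|y|^2+|\ln R|)$ and thus gives Dunford--Pettis compactness. The ``modified entropy'' (the BD-type $\lambda$-entropy and Mellet--Vasseur functional) that the abstract advertises is introduced for a different purpose, namely the sequential compactness of weak solutions in Section~\ref{sec:compactness}, not for the identification of the long-time limit.
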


This theorem entails that, in contrast with the isentropic case, the
density of solutions to \eqref{eq:NSK} disperses as follows : 
\begin{equation*}
  \left\| \rho(t)\right\|_{L^\infty(\R^d)}\Eq t \infty
  \frac{\|\rho_0\|_{L^1(\R^d)}}{\(2P'(0)\sqrt{\pi}\)^d}\times 
\frac{1}{\(t\sqrt{\ln(t)}\)^d}, 
\end{equation*}
with a universal profile. This result applies to a notion of ``weak
solution'' that is based 
on standard {\em a priori} estimates satisfied by smooth solutions to \eqref{eq:NSK}.
We make precise the definition of such solutions in the next section, see Definition~\ref{def:weaksol-minus}. 
\smallbreak

 The main ingredient of the proof is to translate in terms of our
 isothermal equations a change of unknown functions introduced for the
 dispersive logarithmic Schr\"odinger equation  in \cite{CaGa-p}. This
 enables to transform \eqref{eq:NSK} into a system with unknowns $(R,U)$
 for which the associated energy is positive-definite. A second
 feature of the new system is that, asymptotically in time, it reads
 (keeping only the dominating terms): 
\begin{equation}\label{eq:NSK-asymptotic}
  \left\{
    \begin{aligned}
   & \d_t R + \dfrac{1}{\tau^2}\DIV\(R U\)=0,\\
   &\d_t (R U)  +  2 P'(0) y R  + P'(0) \nabla R = 0,  
    \end{aligned}
\right.
\end{equation}
where $\tau$ is the time-dependent scaling mentioned in
Theorem~\ref{thm_mainrough}.  
By taking the divergence of the second equation and replace ${\d_t \rm div(RU)}$ with the first one, we obtain then (keeping again only the dominating terms):
\[
\left\{
\begin{aligned}
& \d_t R  = 0,\\
& \d_t R - P'(0) \mathcal L R = 0, 
\end{aligned}
\right.
\]
where $\mathcal L$ is the Fokker-Planck operator $\mathcal L R = \Delta R + 2 {\rm div}(y R).$ 
In this last system, the first equation implies that $R$ converges to a stationary solution to the second equation. The analysis of the long-time behavior of solutions to this Fokker-Planck equation, as provided in \cite{AMTU01}, entails the expected result. 
\smallbreak 

The outline of the paper is as follows. In the next section, we
provide rigorous definitions of weak solutions and precise statements
for our main result.  Section~\ref{sec:longtime} is then devoted to
the long-time behavior of solutions 
to \eqref{eq:NSK}. In this section, we compute at first explicit
solutions to \eqref{eq:NSK} with Gaussian densities.  These explicit
computations motivate the introduction of the change of variable that
we use afterwards. In what remains of this 
section we give an exhaustive proof of the precise version for
Theorem~\ref{thm_mainrough}.   
The long-time analysis mentioned here is based on the {\em a priori}
existence of solutions. 
However, in the compressible setting, global existence of solutions is
questionable. So, in the last section of the paper, we focus on the
notion of weak solutions that we consider. At first, we present the
{\em a priori} estimates which motivate their definition. We end the
paper by proving a sequential compactness result. This sequential
compactness property is a cornerstone for the proof of existence of
weak solutions, see e.g. \cite{Lio98,Fei04}. As for the large time
behavior, we simply state a loose version of our result here (see Theorem~\ref{theo:compacite} for the precise statement):
\begin{theorem}
 Assume $\nu >0$, $0\le \eps \le \nu$, $P(\rho)=\kappa \rho$ with
 $\kappa>0$, and let 
 $T>0$. Let $(\rho_n,u_n)_{n\in \N}$ be a sequence of weak solutions to
 \eqref{eq:NSK} on $(0,T)$, enjoying uniformly the conservation of
 mass, as well as a suitable 
 notion of energy dissipation, BD-entropy dissipation, and
 Mellet-Vasseur type inequality. Then up to the extraction of a
 subsequence, $(\rho_n,u_n)_{n\in \N}$ converges to a weak solution of \eqref{eq:NSK}
 on $(0,T)$.
\end{theorem}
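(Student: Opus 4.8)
The plan is to establish this sequential compactness result by a standard-but-delicate weak stability argument for compressible Navier–Stokes–Korteweg systems in the quantum/degenerate viscosity regime, following the circle of ideas of Bresch–Desjardins, Mellet–Vasseur, Jüngel, and Vasseur–Yu, but adapted to the exactly isothermal pressure $P(\rho)=\kappa\rho$. First I would collect, from the uniform a priori bounds assumed in the statement (uniform mass conservation, energy dissipation, BD-entropy dissipation, and a Mellet–Vasseur type inequality), the natural functional-analytic estimates: uniform bounds on $\sqrt{\rho_n}$ in $L^\infty(0,T;H^1)$ (via the BD-entropy, which controls $\nabla\sqrt{\rho_n}$ even when $\eps=0$), on $\sqrt{\rho_n}u_n$ in $L^\infty(0,T;L^2)$ (energy), on $\sqrt{\rho_n}\,Du_n$ or the appropriate symmetrized velocity gradient in $L^2((0,T)\times\R^d)$ (viscous dissipation), together with the higher-integrability coming from the Mellet–Vasseur inequality, namely a uniform bound on $\rho_n|u_n|^{2+\delta}$ for some $\delta>0$. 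I would also record the entropic pressure bound: since $P(\rho)=\kappa\rho$, the term $F(\rho)$ in \eqref{eq:energy-init} is $\kappa\rho\ln\rho$, so one gets uniform control of $\rho_n(\ln\rho_n)_+$, which together with the tightness assumption $(1+|x|^2)\rho_n$ bounded prevents both vacuum-region pathologies and mass escaping to infinity.

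Next I would extract weak-$*$ limits: along a subsequence, $\sqrt{\rho_n}\to\sqrt{\rho}$ strongly in $L^2_{loc}$ (by Aubin–Lions, using the $H^1$-in-space bound and a bound on $\d_t\sqrt{\rho_n}$ deduced from the mass equation rewritten as $\d_t\sqrt{\rho_n}+u_n\cdot\nabla\sqrt{\rho_n}+\tfrac12\sqrt{\rho_n}\,\mathrm{div}\,u_n=0$), hence $\rho_n\to\rho$ strongly in $L^p_{loc}$ for suitable $p$ and a.e.; the momentum $m_n:=\rho_n u_n=\sqrt{\rho_n}\cdot\sqrt{\rho_n}u_n$ converges in some $L^p_{loc}$; and one defines the limit velocity $u$ via $m=\rho u$ on $\{\rho>0\}$. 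The key passages to the limit are then: (a) the continuity equation, which is immediate from strong convergence of $\rho_n$ and weak convergence of $m_n$; (b) the convective term $\rho_n u_n\otimes u_n=(\sqrt{\rho_n}u_n)\otimes(\sqrt{\rho_n}u_n)$, where one needs strong $L^2_{loc}$ convergence of $\sqrt{\rho_n}u_n$ — this is exactly where the Mellet–Vasseur bound is used, to upgrade weak convergence of $\sqrt{\rho_n}u_n$ to strong convergence via a truncation-and-interpolation argument à la Mellet–Vasseur; (c) the pressure $\nabla(\kappa\rho_n)\to\nabla(\kappa\rho)$, trivial by linearity once $\rho_n\to\rho$; (d) the Korteweg/Bohm term, which in BD form can be written using $\nabla^2\sqrt{\rho_n}$ and $\nabla\sqrt{\rho_n}\otimes\nabla\sqrt{\rho_n}$, and passes to the limit using the $H^1$-strong / $H^2$-weak convergence of $\sqrt{\rho_n}$ together with the identity relating it to $\rho_n$; and (e) the viscous term $\nu\,\mathrm{div}(\rho_n Du_n)$, for which one writes $\rho_n Du_n=\sqrt{\rho_n}\cdot\sqrt{\rho_n}Du_n$ and uses the weak $L^2$ limit of $\sqrt{\rho_n}Du_n$ plus strong convergence of $\sqrt{\rho_n}$, being careful to identify the limit of $\sqrt{\rho_n}Du_n$ with the corresponding object built from $u$ on the set where $\rho>0$ and to control contributions on the vacuum set.

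The main obstacle — as always for degenerate viscosity compressible systems — is step (b)–(e) on the vacuum set $\{\rho=0\}$: the velocity $u$ is only defined $\rho$-a.e., so one must show that all nonlinear terms involving $u$ (convection, viscosity, and the cross term in the Korteweg stress) have limits that do not ``see'' spurious mass on the vacuum region, and that the limiting triple $(\rho,\sqrt{\rho}u,\sqrt{\rho}Du)$ satisfies the weak formulation with test functions compactly supported away from nothing in particular — i.e. one tests against $\phi$ and must make sense of $\int \rho u\cdot\d_t\phi$ etc. purely in terms of $\sqrt{\rho}u$. The standard resolution is to prove the strong convergence of $\sqrt{\rho_n}u_n$ in $L^2_{loc}((0,T)\times\R^d)$ first (via Mellet–Vasseur), and then define $\sqrt{\rho}Du$ as the weak limit of $\sqrt{\rho_n}Du_n$, checking the compatibility relation $\nabla(\rho u)=\sqrt{\rho}\cdot(2\sqrt{\rho}Du)-2\sqrt{\rho}u\otimes\nabla\sqrt{\rho}$ (understood distributionally) survives the limit. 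Tightness in space is handled by the assumed second-moment bound, so no mass is lost at infinity and $\|\rho(t)\|_{L^1}=\|\rho_0\|_{L^1}$ persists. Finally I would verify that the limit $(\rho,u)$ is admissible in the sense of the paper's Definition~\ref{def:weaksol-minus}, i.e. that it inherits, by weak lower semicontinuity of the various convex entropy/dissipation functionals, the energy inequality, the BD-entropy inequality, and the Mellet–Vasseur inequality — so that the limit is itself a weak solution in the same class, closing the compactness loop.
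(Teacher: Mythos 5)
Your sketch of the compactness \emph{mechanism} — Aubin--Lions for $\sqrt{\rho_n}$, defining the limit velocity only on $\{\rho>0\}$, a Mellet--Vasseur truncation argument to upgrade $\sqrt{\rho_n}u_n$ to strong $L^2_{\rm loc}$ convergence, and then passing to the limit in the convective, pressure, Korteweg, and viscous stresses — is essentially the scheme the paper follows, and those steps are sound in themselves. What is missing is the specific obstacle that the isothermal case poses, and the device the paper introduces to overcome it. You work throughout in the physical variables $(\rho,u)$, and you assume that ``energy dissipation, BD-entropy dissipation, and a Mellet--Vasseur type inequality'' give you directly the bounds on $\sqrt{\rho_n}$ in $L^\infty_t H^1_x$, on $\sqrt{\rho_n}u_n$, on $\rho_n|u_n|^{2+\delta}$, and on $\rho_n(\ln\rho_n)_+$. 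But with $P(\rho)=\kappa\rho$ the internal energy is $\kappa\rho\ln\rho$, which has no sign, and neither the BD-entropy nor the Mellet--Vasseur functional closes in $(\rho,u)$ form: the paper explicitly states that they did \emph{not} find a way to express the relevant pseudo-energy, pseudo BD-entropy, and Mellet--Vasseur inequality directly in $(\rho,u)$.

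The paper's precise result (Theorem~\ref{theo:compacite}) is therefore stated and proved for the rescaled unknowns $(R,U)$ defined via the time-dependent Gaussian change of variables \eqref{eq:uvFluid}. In these variables the pseudo-energy $\mathcal E$ of \eqref{eq:pseudo-energy} is the relative entropy with respect to the Gaussian profile, hence non-negative by Csisz\'ar--Kullback; the BD-entropy and Mellet--Vasseur quantities likewise acquire a sign because of the additional confining term $2\kappa yR$. Crucially, the Mellet--Vasseur functional must be taken as $R\,\varphi_{MV}(|W_\eps|^2+|y|^2)$ rather than $\rho\,\varphi_{MV}(|u|^2)$ — the extra $|y|^2$ weight is precisely what allows the estimate to close against the linear forcing $yR$ in \eqref{eq:RU}, as the authors remark after Proposition~\ref{prop:inegaliteMV}. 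One also needs the second moment $|x|\sqrt{\rho}\in L^\infty_tL^2_x$ (equivalently $|y|\sqrt{R}\in L^\infty_tL^2_x$) not merely as a ``tightness'' remark but built into the very definition of weak solution (Definition~\ref{def:weaksol-minus}), because it is what makes the change of variables an equivalence of notions of solution (Lemma~\ref{lem:equivalence}). So the core compactness argument you outline is the right one, but it must be run on $(R_n,U_n)$ with the pseudo-entropies above, not on $(\rho_n,u_n)$ with the naive isothermal entropy, and your proposal as written does not identify that this reformulation is not optional in the isothermal regime.
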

It is for
the system \eqref{eq:RU} in terms of $(R,U)$, as mentioned above, that fairly natural a priori estimates are required in the
above statement. Even though the notions of solution for
\eqref{eq:NSK} and \eqref{eq:RU} are equivalent
(Lemma~\ref{lem:equivalence} below), we did not find a direct approach
to express the pseudo energy, pseudo BD-entropy and Mellet-Vasseur
type inequality mentioned above in a direct way in terms of
$(\rho,u)$, that is, without resorting to $(R,U)$.


\section{Weak solutions and large time behavior}
\label{sec:main}

We now state a precise definition regarding the notion of solution
that we consider in this paper. 
Even though, in \eqref{eq:NSK}, the fluid genuine unknowns are $\rho$
and $u$, the mathematical theory that we develop  in
Section~\ref{sec:compactness} suits 
better to the unknowns $\sqrt{\rho}$ and $\sqrt{\rho}u$. Therefore we state
our definition of weak solution 
in terms of these latter unknowns. Nevertheless, we shall keep these notations, even though no fluid velocity field $u$ underlies the computation of $\sqrt{\rho}u.$ 

\begin{definition} \label{def:weaksol-minus}
Let $\nu \ge 0$ and $\varepsilon \ge 0.$ Given $T >0$, we call weak solution to \eqref{eq:NSK} on $(0,T)$ any pair $(\rho,u)$ such that there is a collection $(\sqrt{\rho},\sqrt{\rho}u ,\mathbf
S_{K},\mathbf T_{N})$ satisfying
\begin{itemize}
\item[i)] The  following regularities:
\begin{align*}
&\(\<x\> +|u|\)\sqrt{\rho} \in L^{\infty}\(0,T; L^2 (\R^d)\),\quad \text{where
  }\<x\> =\sqrt{1+|x|^2}, \\
&(\eps+\nu)\nabla \sqrt \rho \in L^{\infty}\(0,T; L^2 (\R^d)\),\\
&\varepsilon \sqrt \nu \,  \nabla^2 \sqrt{\rho} \in L^2(0,T;L^2(\mathbb
  R^d)) ,\\
& \sqrt \nu \, \mathbf T_N \in L^2(0,T; L^2(\mathbb R^d)) ,
\end{align*}
with the compatibility conditions
\[
\sqrt{\rho} \ge 0 \text{ a.e.  on } (0,T)\times \R^d,  \quad   \sqrt{\rho}u=
0 \text{ a.e. on } \left\{\sqrt{\rho} = 0\right\}.
\]

\item[ii)] {\normalfont \bf Euler case $\eps=\nu=0$:} The following equations in $\mathcal D'((0,T)\times \mathbb R^d)$
\begin{equation}\label{eq:NSKminus-bis-euler}
  \left\{
    \begin{aligned}
  & \d_t{\rho}+\DIV ({\rho} u )=  0,\\
    &\d_t ({\rho}u) +\DIV ( \sqrt{\rho}u \otimes \sqrt{\rho}u)
      +  \nabla P(\rho) 
     =0 .
    \end{aligned}
\right.
\end{equation}

\item[iii)]{\normalfont \bf Korteweg and Navier-Stokes cases $\eps + \nu > 0$:} The following equations in $\mathcal D'((0,T)\times \mathbb R^d)$
\begin{equation}\label{eq:NSKminus-bis}
  \left\{
    \begin{aligned}
  & \d_t\sqrt{\rho}+\DIV (\sqrt{\rho} u )=  \frac{1}{2}{\rm Trace}(\mathbf T_N),\\
    &\d_t ({\rho}u) +\DIV ( \sqrt{\rho}u \otimes \sqrt{\rho}u)
      +  \nabla P(\rho) 
     =\DIV \left(\nu\sqrt \rho \mathbf S_N
        + \dfrac{\varepsilon^2}{2} \mathbf S_K\right) ,
    \end{aligned}
\right.
\end{equation}
with $\mathbf S_N$ the symmetric part of $\mathbf T_N$, and the
compatibility conditions: 
\begin{align} \label{eq_compnewton-minus}
& \sqrt{\rho}\mathbf T_{N} = \nabla(\sqrt{\rho}  \sqrt{\rho} u) - 2
  \sqrt{\rho}u \otimes \nabla \sqrt{\rho}\,, \\[6pt] 
& \mathbf S_K 
=\sqrt{\rho}\nabla^2 \sqrt{\rho} -  \nabla \sqrt{\rho} \otimes \nabla
  \sqrt{\rho} . \label{eq_compbohm-minus}
\end{align}
\end{itemize}
\end{definition}

We emphasize that the above definition is essentially the ``standard''
one, up to the fact that we require $|x|\sqrt\rho\in
L^\infty(0,T;L^2(\R^d))$. The reason for this assumption will become
clear in the Subsection~\ref{sec:apriori} where we will recall the {\em a
  priori estimates} motivating this definition (see
Lemma~\ref{lem:equivalence}, as well as the definition of the
pseudo-energy $\mathcal E$ in \eqref{eq:pseudo-energy}). 
\smallbreak

Several remarks are in order. When the symbol $\rho$ alone appears, it 
must be understood as $|\sqrt{\rho}|^2$, while when the symbol $u$
appears alone, it is defined by $u =
\sqrt{\rho}u/\sqrt{\rho}\,\mathbf 1_{\sqrt{\rho}>0}.$ Under the compatibility
condition of item $i)$ this yields a well-defined vector-field.  
As for the stress-tensors involved in the momentum equation
\eqref{eq:NSKminus-bis}, we
emphasize that \eqref{eq_compnewton-minus} reads formally $\mathbf T_N = \sqrt{\rho} \nabla u.$ 
\smallbreak

An originality of the previous definition is that in the case
$\eps+\nu>0$, we do not ask for the continuity equation in terms of
$\rho$ but in terms of $\sqrt{\rho}.$ However, we prove here that 
the usual continuity equation as written in \eqref{eq:NSKminus-bis} is
a consequence to 
this definition thanks to the regularity of $\sqrt{\rho}$ and
$\sqrt{\rho}u.$ This is the content of the following lemma: 
\begin{lemma}\label{lem:continuite}
Let $\eps + \nu >0$.
Assume that $(\rho,u)$ is a weak solution to \eqref{eq:NSKminus-bis} on
$(0,T)$  in the sense of Definition~\ref{def:weaksol-minus}. 
Then it satisfies
\[
\partial_t \rho + {\rm div}(\rho u) = 0 \quad \text{ in
  $\mathcal D'((0,T) \times \mathbb R^d).$} 
\]
\end{lemma}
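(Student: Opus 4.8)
The plan is to derive the continuity equation in $\rho=|\sqrt\rho|^2$ by multiplying the equation for $\sqrt\rho$ by $2\sqrt\rho$, which formally gives $\partial_t\rho + 2\sqrt\rho\,\mathrm{div}(\sqrt\rho u) = \sqrt\rho\,\mathrm{Trace}(\mathbf T_N)$. The point is to rewrite the right-hand side using the compatibility condition \eqref{eq_compnewton-minus}: taking the trace there yields $\sqrt\rho\,\mathrm{Trace}(\mathbf T_N) = \mathrm{div}(\sqrt\rho\,\sqrt\rho u) - 2\sqrt\rho u\cdot\nabla\sqrt\rho = \mathrm{div}(\rho u) - 2\sqrt\rho u\cdot\nabla\sqrt\rho + (\text{terms that cancel})$; more precisely $\mathrm{div}(\sqrt\rho\,\sqrt\rho u) = \sqrt\rho\,\mathrm{div}(\sqrt\rho u) + \sqrt\rho u\cdot\nabla\sqrt\rho$, so the trace identity gives $\sqrt\rho\,\mathrm{Trace}(\mathbf T_N) = \sqrt\rho\,\mathrm{div}(\sqrt\rho u) - \sqrt\rho u\cdot\nabla\sqrt\rho$. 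Substituting, the formal computation collapses to $\partial_t\rho + \sqrt\rho\,\mathrm{div}(\sqrt\rho u) + \sqrt\rho u\cdot\nabla\sqrt\rho = 0$, i.e. $\partial_t\rho + \mathrm{div}(\rho u)=0$ since $\mathrm{div}(\rho u) = \mathrm{div}(\sqrt\rho\cdot\sqrt\rho u) = \sqrt\rho\,\mathrm{div}(\sqrt\rho u) + \sqrt\rho u\cdot\nabla\sqrt\rho$.

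The real work is to make this rigorous at the claimed regularity, since $\sqrt\rho$ only lies in $L^\infty(0,T;L^2)$ with $(\eps+\nu)\nabla\sqrt\rho\in L^\infty(0,T;L^2)$ and $\sqrt\nu\,\mathbf T_N\in L^2(0,T;L^2)$, so products like $\sqrt\rho\,\mathbf T_N$ or $\sqrt\rho u\cdot\nabla\sqrt\rho$ are not obviously integrable and "multiplying a distributional equation by $2\sqrt\rho$" is not licit as stated. First I would record from Definition~\ref{def:weaksol-minus} which products are genuinely in $L^1_{loc}$: from $\<x\>\sqrt\rho,\ |u|\sqrt\rho\in L^\infty_t L^2_x$ one gets $\sqrt\rho u\in L^\infty_t L^2_x$ and $\rho\in L^\infty_t L^1_x$, $\rho u\in L^\infty_t L^1_x$ (so $\mathrm{div}(\rho u)$ makes sense as a distribution), and from the compatibility relations $\sqrt\rho\,\mathbf T_N$ and $\sqrt\rho u\otimes\nabla\sqrt\rho$ are identified with $\nabla(\sqrt\rho\,\sqrt\rho u)$-type terms. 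The standard device is regularization: mollify the $\sqrt\rho$-equation in space by convolution with $\rho_\delta$, obtaining $\partial_t(\sqrt\rho)_\delta + \mathrm{div}(\sqrt\rho u)_\delta = \tfrac12(\mathrm{Trace}\,\mathbf T_N)_\delta$ with a commutator error; multiply the smooth identity by $2(\sqrt\rho)_\delta$, which is now legitimate; integrate against a test function; and pass to the limit $\delta\to0$, using a Friedrichs/DiPerna–Lions commutator lemma to kill the commutator (here $\sqrt\rho u\in L^\infty_t L^2_x$ and $\nabla(\sqrt\rho u)\in L^2_{t,x}$ when $\nu>0$, via the $\mathbf T_N$ bound, which is exactly what such lemmas need; the Korteweg-only case $\nu=0$ uses $\nabla\sqrt\rho\in L^\infty_t L^2_x$ and $\sqrt\rho u\in L^\infty_t L^2_x$ together with \eqref{eq_compnewton-minus} to control $\mathrm{div}(\sqrt\rho\,\sqrt\rho u)$ directly).

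The main obstacle is thus the commutator/product passage to the limit: one must check that $(\sqrt\rho)_\delta\,\mathrm{div}(\sqrt\rho u)_\delta \to \sqrt\rho\,\mathrm{div}(\sqrt\rho u)$ and $(\sqrt\rho)_\delta\,(\mathrm{Trace}\,\mathbf T_N)_\delta \to \sqrt\rho\,\mathrm{Trace}\,\mathbf T_N$ in $\mathcal D'$, and that the algebraic cancellation described above survives the limit. I expect this to go through because the compatibility conditions \eqref{eq_compnewton-minus} are precisely designed so that every product appearing is a product of an $L^\infty_tL^2_x$ function with an $L^2_tL^2_x$ (or $L^\infty_tL^2_x$) function, landing in $L^1_{loc}$, and the bilinear terms have the divergence structure that makes Friedrichs-type commutator estimates applicable. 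A minor additional point is the time regularity: one also needs $\partial_t\rho\in\mathcal D'$, which follows once the spatial terms are identified as $L^1_{loc}$ functions, since then the equation itself defines $\partial_t\rho$.
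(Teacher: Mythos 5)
Your plan follows the same formal algebra as the paper: multiply the $\sqrt\rho$-equation by $2\sqrt\rho$, then use the compatibility condition \eqref{eq_compnewton-minus} to rewrite $\sqrt\rho\,\mathrm{Trace}(\mathbf T_N)$ and $\sqrt\rho\,\mathrm{div}(\sqrt\rho u)$ in terms of $\mathrm{div}(\rho u)$ and $\sqrt\rho u\cdot\nabla\sqrt\rho$, at which point everything cancels. The paper's own proof justifies the multiplication step only briefly, by observing that $\sqrt\rho\in L^\infty_t H^1_x$ (since $\eps+\nu>0$) while $\mathrm{div}(\sqrt\rho u)\in L^\infty_t H^{-1}_x$, so the relevant pairings make sense; it does not spell out the chain rule $\partial_t\rho=2\sqrt\rho\,\partial_t\sqrt\rho$ beyond that. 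Your mollification route makes this rigorous and is a legitimate alternative.

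One remark worth flagging: there is actually no commutator to estimate here, so invoking a Friedrichs or DiPerna--Lions commutator lemma slightly overshoots. The $\sqrt\rho$-equation is already in divergence form,
\[
\partial_t\sqrt\rho+\mathrm{div}(\sqrt\rho u)=\tfrac12\,\mathrm{Trace}(\mathbf T_N),
\]
so mollification in space commutes exactly with all the operators involved, and one gets, with no error term,
\[
\partial_t(\sqrt\rho)_\delta+\mathrm{div}\bigl((\sqrt\rho u)_\delta\bigr)=\tfrac12\bigl(\mathrm{Trace}\,\mathbf T_N\bigr)_\delta .
\]
Multiplying by $2(\sqrt\rho)_\delta$ and using the exact Leibniz rule for smooth functions yields
\[
\partial_t\bigl[(\sqrt\rho)_\delta\bigr]^2+2\,\mathrm{div}\bigl((\sqrt\rho)_\delta(\sqrt\rho u)_\delta\bigr)-2\nabla(\sqrt\rho)_\delta\cdot(\sqrt\rho u)_\delta=(\sqrt\rho)_\delta\bigl(\mathrm{Trace}\,\mathbf T_N\bigr)_\delta,
\]
and every product passes to the limit in $L^1_{\mathrm{loc}}$ because $(\sqrt\rho)_\delta\to\sqrt\rho$, $\nabla(\sqrt\rho)_\delta\to\nabla\sqrt\rho$, $(\sqrt\rho u)_\delta\to\sqrt\rho u$ all strongly in $L^2_{\mathrm{loc}}$ (this is just the $L^2$ convergence of mollifications, using $\sqrt\rho\in L^\infty_t H^1_x$ and $\sqrt\rho u\in L^\infty_t L^2_x$, plus $\mathbf T_N\in L^2_{t,x}$ when $\nu>0$). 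A genuine commutator estimate would be required only if the equation were written with a transport term $u\cdot\nabla\sqrt\rho$, which is precisely what the present formulation avoids; so the DiPerna--Lions machinery is not needed. After taking $\delta\to0$ and substituting the traced compatibility condition $\sqrt\rho\,\mathrm{Trace}(\mathbf T_N)=\mathrm{div}(\rho u)-2\sqrt\rho u\cdot\nabla\sqrt\rho$, you recover exactly the paper's chain of identities and the cancellation goes through. In short: correct plan, same route as the paper, with a rigorization step that is sound but can be pared down since there is no commutator error to control.
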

\begin{proof}
By definition, we have 
\[
\d_t\sqrt{\rho}+\DIV\(\sqrt{\rho} u\)=  \frac{1}{2}{\rm Trace}(\mathbf T_N)
\]
Here we note that $\sqrt{\rho}u \in
L^{\infty}(0,T;L^{2}(\mathbb R^d))$ (so that ${\rm div}(\sqrt{\rho}u) \in
L^{\infty}(0,T;H^{-1}(\mathbb R^d))$).  We can then multiply this
equation by $\sqrt{\rho} \in L^{\infty}(0,T;H^1(\mathbb R^d)).$
We
obtain: 
\[
\partial_t {\rho}  = - 2\sqrt{\rho}\, {\rm div}(\sqrt{\rho}u)
+ \sqrt{\rho}\, {\rm Trace}(\mathbf T_N).
\]
At this point we remark that, by definition of $\mathbf T_N:$
\[
{\rm div}(\rho u) = \sqrt{\rho}\, {\rm Trace}(\mathbf T_N) + 2 \sqrt{\rho} u
\cdot \nabla \sqrt{\rho}   
\]
and, since $\rho u = \sqrt{\rho} \sqrt{\rho} u$, the products of
the identity below are well-defined: 
\[
{\rm div}(\rho u) = \sqrt{\rho}\, {\rm div}(\sqrt{\rho}u) +
\sqrt{\rho} u \cdot \nabla \sqrt{\rho}. 
\]
Combining these equation entails 
\[
{\rm div}(\rho u) = 2\sqrt{\rho}\,  {\rm div}(\sqrt{\rho} u)  -
\sqrt{\rho}\, {\rm Trace}{\mathbf T}_N. 
\]
We conclude thus that:
\[
\partial_t {\rho}  = -{\rm div}(\rho u).
\]
\end{proof}

\subsection{Rewriting of \eqref{eq:NSK} with a suitable time-dependent scaling}
\label{sec:large-time}
In the case where the density $\rho$
is defined for all time and is dispersive (in the sense that it goes
to zero pointwise), it is natural to examine
the behavior of $P$ near $0$, since it gives an ``asymptotic
pressure law'' as time goes to infinity. A consequence of our
result is that the large time behavior in \eqref{eq:NSK} is very
different according to $P'(0)>0$ or $P'(0)=0$. 
Herein, we assume that $P\in
C^2(0,\infty;\R^+)$ with $P'(0)>0$ and $P''\ge 0$.
Typically, when $P''\equiv 0$, we recover the isothermal case,
$P(\rho)=\kappa\rho$, and we can also consider
\begin{equation*}
  P(\rho)=\kappa \rho +\sum_{j=1}^N
  \kappa_j\rho^{\gamma_j},\quad N\ge 1,\ \kappa_j>0,\ \gamma_j>1,
\end{equation*}
with no other restriction on $\gamma_j$ (in any dimension), or even the exotic case
$P(\rho)=e^\rho$. 
The most general class of pressure laws that we shall consider 
is fixed by the following assumptions:
\begin{hyp}[Pressure law]\label{hyp:P-temps-long}
  The pressure $P\in C^1(\R^+;\R^+)\cap C^2(0,\infty;\R^+)$ is convex
  ($P''(\rho)\ge 0$ for all 
  $\rho> 0$), and satisfies
\[ 
\kappa:=P'(0)>0.
\]
\end{hyp}

 Resuming the
approach from \cite{CaGa-p} (the link between Schr\"odinger equation
and Euler-Korteweg equation is formally given by the Madelung transform), we
change the unknown functions as follows. Introduce $\tau(t)$
solution of the ordinary
differential equation
\begin{equation}\label{eq:tau}
  \ddot \tau = \frac{2\kappa }{\tau} \, ,\quad \tau(0)=1\, ,\quad \dot
  \tau(0)=0\, .
\end{equation}
The reason for considering this equation will become clear in
Subsection~\ref{sec:explicit}.  We find in \cite{CaGa-p}, for slightly
more general initial data:
\begin{lemma}\label{lem:taubis}
 Let $\alpha,\kappa>0$, $\beta\in \R$. Consider the ordinary differential equation
\begin{equation}\label{eq:taugen}
  \ddot \tau = \frac{2\kappa }{\tau} \, ,\quad \tau(0)=\alpha\, ,\quad \dot
  \tau(0)=\beta .
\end{equation}
It has a unique solution $\tau\in C^2(0,\infty)$, and it satisfies, as
$t\to \infty$, 
\begin{equation*}
  \tau(t)= 2t \sqrt{\kappa \ln t}\(1+\O(\ell(t))\) \, ,\quad \dot
  \tau(t)=2\sqrt{\kappa\ln  t}\(1+\O(\ell(t))\),
\end{equation*} 
where
\begin{equation*}
\ell (t):= \frac{\ln \ln t}{ \ln t} \, \cdotp
\end{equation*}
\end{lemma}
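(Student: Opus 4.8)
The plan is to analyze the ODE $\ddot\tau = 2\kappa/\tau$ with $\tau(0)=\alpha>0$, $\dot\tau(0)=\beta$ by first reducing it to a first-order relation via a standard energy-type integral, and then extracting the large-time asymptotics by a bootstrap argument.

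\medskip

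\textbf{Step 1: global existence, positivity and monotonicity.} Since the right-hand side $2\kappa/\tau$ is smooth and positive for $\tau>0$, the Cauchy--Lipschitz theorem gives a unique local $C^2$ solution. I would observe that $\ddot\tau>0$ as long as $\tau>0$, so $\dot\tau$ is strictly increasing; hence for $t$ larger than some $t_0$ we have $\dot\tau(t)>0$ and $\tau$ is increasing. This prevents $\tau$ from reaching $0$ in finite forward time (if $\tau$ were decreasing it would be concave-up and could only decrease for a bounded time before $\dot\tau$ turns positive; a short quantitative check rules out blow-up to $0$), and since $\dot\tau$ is increasing, $\tau$ cannot blow up in finite time either (the growth of $\dot\tau$ is controlled because $2\kappa/\tau$ is then bounded). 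So the solution is global on $(0,\infty)$ — indeed on $[0,\infty)$ — and eventually $\tau\to\infty$ with $\dot\tau>0$.

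\medskip

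\textbf{Step 2: the first integral.} Multiplying the equation by $\dot\tau$ and integrating gives
\begin{equation*}
  \frac{1}{2}\dot\tau(t)^2 = 2\kappa \ln\tau(t) + C,\qquad
  C = \frac{\beta^2}{2} - 2\kappa\ln\alpha,
\end{equation*}
so that $\dot\tau = \sqrt{4\kappa\ln\tau + 2C}$ for $t$ large. This is the key identity: it shows $\dot\tau \sim 2\sqrt{\kappa\ln\tau}$ as $\tau\to\infty$, reducing everything to understanding how $\tau$ relates to $t$.

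\medskip

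\textbf{Step 3: asymptotics of $\tau$ in terms of $t$.} From $\dot\tau/\sqrt{4\kappa\ln\tau+2C} = 1$ I would integrate in $t$ to get $\int^{\tau(t)} \dd s/\sqrt{4\kappa\ln s + 2C} = t + O(1)$. A change of variables or a direct estimate shows the left-hand integral behaves like $\tau/\sqrt{4\kappa\ln\tau}$ to leading order, with relative corrections of size $\ell(\tau)=\ln\ln\tau/\ln\tau$. Thus $t \sim \tau/(2\sqrt{\kappa\ln\tau})$, i.e. $\tau \sim 2t\sqrt{\kappa\ln\tau}$; and since $\ln\tau \sim \ln t$ (because $\tau$ grows polynomially-times-logarithmically), one obtains $\tau(t) = 2t\sqrt{\kappa\ln t}\,(1+O(\ell(t)))$. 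Feeding this back into the first integral of Step 2 gives $\dot\tau(t) = 2\sqrt{\kappa\ln t}\,(1+O(\ell(t)))$.

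\medskip

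The main obstacle I anticipate is not any single hard estimate but the careful bookkeeping of the error terms in Step 3: one must track that replacing $\ln\tau$ by $\ln t$ inside the square root, and estimating the integral $\int \dd s/\sqrt{\ln s}$ against $\tau/\sqrt{\ln\tau}$, both produce errors of exactly the order $\ell(t)$ claimed and no worse. A clean way to do this rigorously is to set $\tau = 2t\sqrt{\kappa\ln t}\,(1+\varepsilon(t))$, substitute into the first integral, and derive a closed estimate forcing $\varepsilon(t)=O(\ell(t))$ by a continuity/bootstrap argument. Since the statement says the result is essentially quoted from \cite{CaGa-p}, I would present this reduction and the bootstrap and refer to that paper for the remaining routine verifications.
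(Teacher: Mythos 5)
Your proposal is correct and follows essentially the same route as the paper's proof in Appendix~B: derive the first integral $(\dot\tau)^2 = C + 4\kappa\ln\tau$ by multiplying by $\dot\tau$, use it to bound $\tau$ away from $0$ (hence global existence and eventual monotone growth to infinity), then separate variables and compare the integral $\int^\tau \dd s/\sqrt{4\kappa\ln s + 2C}$ with $\tau/\sqrt{4\kappa\ln\tau}$ to read off the asymptotics. The only organizational difference is that you argue global existence before introducing the first integral (relying on a deferred "quantitative check" that is in fact exactly that integral), whereas the paper derives the first integral first and reads the lower bound $\tau\ge e^{-C/4\kappa}$ directly from it; and your suggested bootstrap for pinning down the $\mathcal O(\ell(t))$ remainder is consistent with the paper, which explicitly leaves the quantitative error to the reference \cite{CaGa-p}.
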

We
sketch the proof of this lemma in Appendix~\ref{sec:ODE}, without
paying attention to the quantitative 
estimate of the remainder term. 
We now introduce the Gaussian $\Gamma(y)=e^{-|y|^2}$, and we set
\begin{equation}
  \label{eq:uvFluid}
  \rho(t,x) =
  \frac{1}{\tau(t)^d}R\(t,\frac{x}{\tau(t)}\)
\frac{\|\rho_0\|_{L^1}}{\|\Gamma\|_{L^1}},\quad 
  u(t,x) = \frac{1}{\tau(t)} U \(t,\frac{x}{\tau(t)}\) +\frac{\dot \tau(t)}{\tau(t)}x,
\end{equation}
where we denote by $y$ the
spatial variable for $R$ and $U$.
Denoting $\theta=\frac{\| \rho_0 \|_{L^1}}{\|\Gamma\|_{L^1}}$,
\eqref{eq:NSK} becomes, in terms of these new unknowns,
\begin{equation}\label{eq:RU}
  \left\{
    \begin{aligned}
  & \d_tR+\frac{1}{\tau^2}\DIV\(R U\)=0,\\
    &\d_t (R U) +\frac{1}{\tau^2}\DIV ( R U \otimes U)
      +2\kappa y R 
      +P'\( \tfrac{\theta R}{\tau^d}\) \nabla R  \\
&\phantom{\d_t (R w) +\frac{1}{\tau^2}\DIV   }
=\frac{\eps^2}{2\tau^2}R\nabla\( \frac{\Delta  \sqrt{R}}{\sqrt{R}}\)   
+\frac{\nu}{\tau^2} \DIV (R DU) + \frac{\nu \dot \tau}{\tau} \nabla R.
    \end{aligned}
\right.
\end{equation}
The analogue of Definition~\ref{def:weaksol-minus} is the following:
\begin{definition} \label{def_weaksol}
Let $\nu \ge 0$ and $\varepsilon \ge 0.$ Given $T >0$, we call weak
solution to \eqref{eq:RU} on $(0,T)$ any pair $(R,U)$ such that there exists a collection $(\sqrt{R},\sqrt{R}U,\mathbb S_{K},\mathbb
T_{N})$ satisfying 
\begin{itemize}
\item[i)] The  following regularities:
\begin{align*}
&\(\<y\>+|U|\) \sqrt{R} \in L^{\infty}\(0,T; L^2 (\R^d)\),\\
&(\eps+\nu)\nabla \sqrt R \in L^{\infty}\(0,T; L^2 (\R^d)\),\\
&\varepsilon \sqrt \nu \,  \nabla^2 \sqrt{R} \in L^2(0,T;L^2(\mathbb
  R^d)) ,\\
& \sqrt \nu \, \mathbb T_N \in L^2(0,T; L^2(\mathbb R^d)) ,
\end{align*}
with the compatibility conditions
\[
\sqrt{R} \ge 0 \text{ a.e.  on } (0,T)\times \R^d,  \quad   \sqrt{R}U=
0 \text{ a.e. on } \{\sqrt{R} = 0 \}.
\]

\item[ii)] {\normalfont \bf Euler case $\eps=\nu=0$:} The following equations in $\mathcal D'((0,T)\times \mathbb R^d)$
\begin{equation}\label{eq:NSKrevu-euler}
  \left\{
    \begin{aligned}
  & \d_t{R}+\frac{1}{\tau^2}\DIV ({R} U )=  0,\\
    &\d_t ({R}U) +\frac{1}{\tau^2}\DIV ( \sqrt{R}U \otimes \sqrt{R}U)
      +2\kappa y R + P'\(\tfrac{\theta R}{\tau^d}\)\nabla R =0.
    \end{aligned}
\right.
\end{equation}

\item[iii)]{\normalfont \bf Korteweg and Navier-Stokes cases $\eps + \nu > 0$:} The following equations in $\mathcal D'((0,T)\times \mathbb R^d)$
\begin{equation}\label{eq:NSKrevu}
  \left\{
    \begin{aligned}
  & \d_t\sqrt{R}+\frac{1}{\tau^2}\DIV (\sqrt{R} U )=  \frac{1}{2\tau^2}{\rm Trace}(\mathbb T_N),\\
    &\d_t ({R}U) +\frac{1}{\tau^2}\DIV ( \sqrt{R}U \otimes \sqrt{R}U)
      +2\kappa y R + P'\(\tfrac{\theta R}{\tau^d}\)\nabla R 
      \\
      & \phantom{\d_t ({R}U) +\frac{1}{\tau^2}\DIV }=\DIV \left(\dfrac{\nu}{\tau^2} \sqrt R \mathbb S_N
        + \dfrac{\varepsilon^2}{2\tau^2} \mathbb S_K\right) +
      \dfrac{\nu \dot{\tau}}{\tau} \nabla R,
    \end{aligned}
\right.
\end{equation}
with $\mathbb S_N$ the symmetric part of $\mathbb T_N$ and the compatibility conditions:
\begin{align} \label{eq_compnewton}
& \sqrt{R}\mathbb T_{N} = \nabla(\sqrt{R}  \sqrt{R} U) - 2 \sqrt{R}U \otimes \nabla \sqrt{R}\,, \\[6pt]
& \mathbb S_K 
=\sqrt{R}\nabla^2 \sqrt{R} -  \nabla \sqrt{R} \otimes \nabla \sqrt{R}
  \,, \label{eq_compkorteweg} 
\end{align}
\end{itemize}
\end{definition}
Mimicking the proof of Lemma~\ref{lem:continuite}, we see that in the
case $\eps + \nu >0$, if 
$(R,U)$ is a weak solution to \eqref{eq:NSKrevu} on
$(0,T)$  in the sense of Definition~\ref{def_weaksol}, then
it satisfies 
\[
\partial_t R +\frac{1}{\tau^2} {\rm div}(R U) = 0 \quad \text{ in
  $\mathcal D'((0,T) \times \mathbb R^d).$} 
\]
In view of \eqref{eq:uvFluid}, we check directly:
\begin{lemma}[Equivalence of the notions of solution]\label{lem:equivalence}
  Let $T>0$. Then $(\rho,u)$ is a weak solution of \eqref{eq:NSK} on
  $(0,T)$ if and only if  $(R,U)$ is a weak solution of \eqref{eq:RU} on
  $(0,T)$, where $(\rho,u)$ and $(R,U)$ are related through \eqref{eq:uvFluid}.
\end{lemma}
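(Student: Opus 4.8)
The plan is to treat \eqref{eq:uvFluid} as an explicit, invertible change of unknowns and to verify, item by item, that it carries Definition~\ref{def:weaksol-minus} onto Definition~\ref{def_weaksol} and back. First I would record the elementary facts about the scaling: by Lemma~\ref{lem:taubis} the solution $\tau$ of \eqref{eq:tau} is $C^2$ on $[0,\infty)$ with $\tau\ge 1$, so on the fixed interval $[0,T]$ both $\tau$ and $\dot\tau$ are bounded with $1\le\tau\le\max_{[0,T]}\tau<\infty$, while $\ddot\tau=2\kappa/\tau$ is continuous. Hence for each $t$ the spatial map $x\mapsto y=x/\tau(t)$ is a linear isomorphism of $\R^d$ and \eqref{eq:uvFluid} is a bijection $(\rho,u)\leftrightarrow(R,U)$ with inverse of the same form. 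To it I would attach the natural rescalings of the auxiliary objects, $\sqrt{R}(t,y)=\theta^{-1/2}\tau^{d/2}\sqrt\rho(t,\tau y)$, $\sqrt{R}U(t,y)=\theta^{-1/2}\tau^{d/2+1}\bigl((\sqrt\rho u)(t,\tau y)-\dot\tau\,y\,\sqrt\rho(t,\tau y)\bigr)$, and $\mathbb T_N$, $\mathbb S_K$ obtained from $\mathbf T_N$, $\mathbf S_K$ by the corresponding powers of $\tau$ (with $\mathbb S_N$, $\mathbf S_N$ their symmetric parts). With these choices the purely algebraic identities \eqref{eq_compnewton}--\eqref{eq_compkorteweg} and the sign/support conditions are term-by-term equivalent to \eqref{eq_compnewton-minus}--\eqref{eq_compbohm-minus}.

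Next I would check the equivalence of the regularity class in item~i). The change of variables $x=\tau(t)y$ in the spatial integrals converts $\|\sqrt\rho(t)\|_{L^2}$, $\||u|\sqrt\rho(t)\|_{L^2}$, $\|\langle x\rangle\sqrt\rho(t)\|_{L^2}$, $\|\nabla\sqrt\rho(t)\|_{L^2}$, $\|\nabla^2\sqrt\rho(t)\|_{L^2}$, $\|\mathbf T_N(t)\|_{L^2}$ into the corresponding norms of $\sqrt{R}$, $|U|\sqrt{R}$, $\langle y\rangle\sqrt{R}$, $\nabla\sqrt{R}$, $\nabla^2\sqrt{R}$, $\mathbb T_N$, up to powers of $\tau(t)$ and $\dot\tau(t)$ only; since these factors stay bounded above and below on $[0,T]$, the conditions $L^\infty(0,T;L^2)$ and $L^2(0,T;L^2)$ transfer in both directions. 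For the weighted terms one uses $\langle x\rangle^2=1+\tau^2|y|^2$ together with the mass identity $\|\rho(t)\|_{L^1}=\theta\|R(t)\|_{L^1}$ to pass between the weights $\langle x\rangle$ and $\langle y\rangle$, and $|u(t,\tau y)|^2\lesssim\tau^{-2}|U(t,y)|^2+\dot\tau^2|y|^2$ (with the reverse bound) for the kinetic term.

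Finally I would substitute \eqref{eq:uvFluid} into the equations. For the continuity equation a direct chain-rule computation gives $\partial_t\rho+\DIV_x(\rho u)=\theta\tau^{-d}\bigl(\partial_t R+\tau^{-2}\DIV_y(RU)\bigr)(t,x/\tau)$, the contributions carrying $\dot\tau/\tau$ cancelling identically (this step uses nothing about $\tau$), so the two continuity equations, and their $\sqrt\rho$-forms as in Lemma~\ref{lem:continuite}, are equivalent in $\mathcal D'$. For the momentum equation, expanding $\DIV(\rho u\otimes u)$ with $u=\tau^{-1}U+\tau^{-1}\dot\tau\,x$ and differentiating the background part $\tau^{-d}\theta R\,\tau^{-1}\dot\tau\,x$ in time produces a term proportional to $\ddot\tau$; it is exactly here that the defining ODE \eqref{eq:tau}, $\ddot\tau=2\kappa/\tau$, intervenes, converting the inertia of the background flow into the confinement term $2\kappa yR$ of \eqref{eq:RU}, while the remaining $\dot\tau^2$- and $\dot\tau/\tau$-type contributions from the convective and viscous terms recombine into the stated $\tau^{-2}$ prefactors and the $\nu\dot\tau\tau^{-1}\nabla R$ term; the pressure contributes $\tau^{-d-1}\theta\,P'(\theta R\tau^{-d})\nabla_y R$, i.e.\ $P'(\theta R\tau^{-d})\nabla R$ after factoring out the momentum scaling. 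Since every coefficient involved depends on $t$ alone (of class $C^1$, with $\ddot\tau$ continuous) and the spatial transformation is linear at each time, these identities are legitimate in $\mathcal D'((0,T)\times\R^d)$: a test function $\phi(t,x)$ pulls back to the test function $\phi(t,\tau(t)y)$, so the weak formulations of \eqref{eq:NSK} (in the forms \eqref{eq:NSKminus-bis-euler} or \eqref{eq:NSKminus-bis}) and of \eqref{eq:RU} (in the forms \eqref{eq:NSKrevu-euler} or \eqref{eq:NSKrevu}) correspond exactly.

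I expect the only genuine difficulty to be the bookkeeping in this last step: keeping track of all powers of $\tau$, of the cross term between $U/\tau$ and $\dot\tau x/\tau$ in $\DIV(\rho u\otimes u)$, and of the $\ddot\tau$ terms, and checking that they assemble into precisely \eqref{eq:RU} and its Euler and Navier--Stokes--Korteweg versions. This is routine once organized --- it is the fluid counterpart, via the Madelung transform, of the ``lens transform'' of \cite{CaGa-p} --- and, importantly, requires no regularity beyond that of Definition~\ref{def:weaksol-minus}.
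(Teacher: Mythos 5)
Your overall plan is exactly the direct verification the paper leaves to the reader: treat \eqref{eq:uvFluid} as an invertible change of unknowns, check that the regularity class, the compatibility conditions, and the weak formulations all transfer, and isolate the point where the ODE $\ddot\tau=2\kappa/\tau$ enters. The reduction to boundedness of $\tau$, $\dot\tau$, $\tau^{-1}$ on $[0,T]$, the use of $\langle x\rangle^2=1+\tau^2|y|^2$ and $\|\rho\|_{L^1}=\theta\|R\|_{L^1}$ for the weights, the two-sided estimate relating $\sqrt\rho\,u$ to $\sqrt R\,U$ via the confinement term, and the cancellation of all $\dot\tau/\tau$ contributions in the continuity equation are all correct.

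One detail is mis-stated and, if taken at face value, would break the compatibility check: $\mathbb T_N$ is \emph{not} obtained from $\mathbf T_N$ by a pure power of $\tau$. Since formally $\mathbf T_N=\sqrt\rho\,\nabla_x u$ and $\nabla_x u=\tau^{-2}\nabla_y U(\cdot/\tau)+(\dot\tau/\tau)\,I$, the background expansion contributes a diagonal piece, and the natural transformation is
\begin{equation*}
\mathbb T_N(t,y)=\theta^{-1/2}\tau^{d/2+2}\,\mathbf T_N(t,\tau y)-\tau\dot\tau\,\sqrt R(t,y)\,I ,
\end{equation*}
together with the analogous formula for $\mathbb S_N$ (while $\mathbb S_K=\theta^{-1}\tau^{d+2}\mathbf S_K(t,\tau\cdot)$ \emph{is} a pure rescaling). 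One can verify directly that with the naive pure-scaling candidate, $\sqrt R\,\mathbb T_N-\bigl(\nabla(RU)-2\sqrt R U\otimes\nabla\sqrt R\bigr)=\tau\dot\tau\,R\,I\neq 0$, so \eqref{eq_compnewton} would fail; it is precisely this diagonal correction that, on the one hand, restores \eqref{eq_compnewton}--\eqref{eq_compkorteweg}, and on the other hand produces the $\nu\dot\tau\tau^{-1}\nabla R$ term in \eqref{eq:NSKrevu} when $\DIV(\nu\sqrt\rho\,\mathbf S_N/\tau^2)$ is rewritten. The regularity transfer for $\sqrt\nu\,\mathbb T_N\in L^2(0,T;L^2)$ is unaffected, since $\sqrt R\in L^\infty(0,T;L^2)$ absorbs the extra term. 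With this correction your argument goes through and reproduces the paper's (implicit) proof.
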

\begin{remark}
  If in Definition~\ref{def:weaksol-minus}, we had required only
  $(1+|u|)\sqrt\rho\in L^\infty(0,T; L^2(\R^d))$, then the above equivalence
  would not hold. In the same spirit, the change of unknown
  \eqref{eq:uvFluid}  would make the notion of solution rather
  delicate in the case of the Newtonian Navier-Stokes equation, a case
  where typically $u\in L^2(0,T; H^1(\R^d))$. More generally, we do not consider
  velocities enjoying integrability properties, unless the density
  appear as a weight in the integral. 
\end{remark}
\medskip

We define the pseudo-energy $\mathcal E$ of the system \eqref{eq:RU} by
\begin{equation}\label{eq:pseudo-energy}
  \begin{aligned}
    \mathcal E(t) &:= \frac{1}{2\tau^2}\int
  R|U|^2+\frac{\eps^2}{2\tau^2} \int|\nabla \sqrt R|^2 +P'(0)\int (R |y|^2
   + R \ln R) \\
&\quad+ \frac{\tau^d}{\theta}\int G\(\frac{\theta R}{\tau^d}\) ,
  \end{aligned}
  \end{equation}
where
\begin{equation*}
   G(u) =\int_0^u\int_0^v \frac{P'(\sigma)-P'(0)}{\sigma}  \dd \sigma  \dd v,
\end{equation*}
which formally satisfies 
\begin{equation}\label{eq:evol-pseudo}
   \dot{\mathcal E}(t) =- \mathcal D(t) - \nu \frac{\dot
     \tau(t)}{\tau(t)^3} \int R(t,y)  \DIV U(t,y)\dd y ,
\end{equation}
where the dissipation $\mathcal D(t) $ is defined by
\begin{equation}\label{eq:dissip-pseudo}
\begin{aligned}
\mathcal D(t) 
& := \frac{\dot \tau}{\tau^3}\int R |U|^2 +
d\frac{\dot   \tau}{\tau} \tau^d \(\int \left[ P\(\sigma\) -\sigma
  P'(0) \right] |_{\sigma= \frac{ \theta R}{\tau^d}} \) \\
&\quad 
+\eps^2 \frac{\dot \tau}{\tau^3} \int |\nabla \sqrt R|^2 
+  \frac{\nu}{\tau^4} \int |\mathbb S_N|^2.
\end{aligned} 
\end{equation}
By convexity we have $G\ge 0$, and $P(\sigma)\ge P'(0)\sigma$ for
$\sigma\ge 0$, so 
$\mathcal D(t)\ge 0$. Note also the identities 
\begin{equation}
  \label{eq:FG}
  F''(\sigma) = \frac{P'(0)}{\sigma}+G''(\sigma),\quad F(\rho ) =
  P'(0)\rho\ln \rho + G(\rho). 
\end{equation}
Recall the Csisz\'ar-Kullback inequality (see
e.g.\ \cite[Th.~8.2.7]{LogSob}): for $f,g\ge 0$ with $\int_{\R^d} f=\int_{\R^d} g$,
\begin{equation*}
  \|f-g\|_{L^1({\mathbb R}^d)}^2\le 2 \|f\|_{L^1({\mathbb R}^d)}\int f(x)\ln \left(\frac{f(x)}{g(x)}\right)  \dd x.
\end{equation*}
Writing
\[
\int (R |y|^2
   + R \ln R) = \int R \ln \frac{R}{\Gamma},
\]
the (formal) conservation of the mass for $R$ and the definition
\eqref{eq:uvFluid}  imply  that the
pseudo-energy $\mathcal E$ is non-negative, 
$\mathcal E\ge 0$. 
\smallbreak

 As for global solutions, we have the following natural definition:

\begin{definition}\label{def:sol-globale}
Let $\nu \ge 0$ and $\varepsilon \ge 0.$ We call global weak solution to \eqref{eq:RU} any pair $(R,U)$ which, by restriction, yields a weak solution to 
\eqref{eq:RU} on $(0,T)$ for arbitrary $T>0.$ 
\end{definition}

\smallbreak

\subsection{Main result: large-time behavior of weak solutions to \eqref{eq:NSKrevu}}

With the previous definitions and remarks, a quantitative and precise statement
of Theorem \ref{thm_mainrough} reads as follows:
\begin{theorem}\label{theo:temps-long}
  Let $\eps,\nu\ge 0$. Assume that $P$ satisfies
  Assumption~\ref{hyp:P-temps-long}, 
  and let $(R,U)$ be a global weak solution of \eqref{eq:RU}, in the sense
  of Definition~\ref{def:sol-globale}, with constant mass
  \begin{equation*}
    \int_{\R^d}R(t,y) \dd y =  \int_{\R^d}R(0,y) \dd y,\quad \forall t\ge 0. 
  \end{equation*}
  \begin{enumerate}[label={\normalfont(\alph*)}]
  
\item  If $\displaystyle
\sup_{t \ge 0} \mathcal E(t) < \infty,$ then 
\[
\int_{\R^d} yR(t,y)\dd y\Tend t \infty 0\quad \text{and}\quad \left|\int_{\R^d}
  (RU)(t,y)\dd y\right|\Tend t \infty \infty, 
\]
 unless $\int y R(0,y)\dd y= \int (RU)(0,y)\dd y =0$, a case where 
\[
\int_{\R^d} yR(t,y)\dd y=\int_{\R^d}
  (RU)(t,y)\dd y\equiv 0.
\]

\smallskip
  \item If $\displaystyle
\sup_{t \ge 0} \mathcal E(t) + \int_0^\infty \mathcal D(t) \, \dd t < \infty,$
then
$R(t, \cdot) \rightharpoonup \Gamma$ weakly in $L^1(\R^d)$ as $ t \to \infty$.

\smallskip
\item If $\displaystyle
\sup_{t \ge 0} \mathcal E(t) < \infty$ and  the energy $E$
  defined by \eqref{eq:energy-init} 
satisfies $E(t)=o\(\ln t\)$ as $t\to \infty$, then
\begin{equation*}
  \int_{\R^d}|y|^2R(t,y)  \dd y \Tend t \infty \int_{\R^d}|y|^2 \Gamma(y) \dd y.
\end{equation*}
  \end{enumerate}
\end{theorem}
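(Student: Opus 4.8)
The plan is to reduce (a) and (c) to ordinary differential identities for suitable moments of $R$, and to obtain (b) from a weak‑$L^1$ compactness argument combined with the asymptotic Fokker--Planck structure announced around \eqref{eq:NSK-asymptotic}.

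\emph{Moments and their evolution.} I would first set
\[
m(t)=\int_{\R^d}y\,R(t,y)\,\dd y,\quad p(t)=\int_{\R^d}(RU)(t,y)\,\dd y,\quad J(t)=\int_{\R^d}|y|^2R(t,y)\,\dd y,\quad K(t)=\int_{\R^d}y\cdot(RU)(t,y)\,\dd y.
\]
Under the regularity of Definition~\ref{def_weaksol}, $m$ and $p$ make sense since $yR=(y\sqrt R)\sqrt R$ and $RU=\sqrt R(\sqrt RU)$ are products of $L^\infty(0,T;L^2)$ functions; moreover $\sup_t\mathcal E(t)<\infty$ bounds the relative entropy $\int R\ln(R/\Gamma)$, and the Gibbs variational principle ($\lambda\int R|y|^2\le\int R\ln(R/\Gamma)+\ln\int e^{-(1-\lambda)|y|^2}\dd y$ for $0<\lambda<1$) then bounds $J$, so $K$ is well defined with $|K(t)|\le\|\sqrt RU\|_{L^2}\|\<y\>\sqrt R\|_{L^2}\le C\tau(t)$ by \eqref{eq:pseudo-energy}. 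Testing the weak formulation \eqref{eq:NSKrevu} (resp. \eqref{eq:NSKrevu-euler}) against $\chi_n(y)y_j$ and $\chi_n(y)|y|^2$ and letting the cut-off $\chi_n\uparrow1$ — all flux, pressure and dissipative contributions being integrable thanks to the {\em a priori} bounds recalled in Subsection~\ref{sec:apriori} — shows that $m,p,J,K$ are absolutely continuous and, using that $\int P'(\theta R/\tau^d)\nabla R\,\dd y=0$ and that the dissipative terms are exact divergences or gradients,
\[
\dot m(t)=\frac{1}{\tau(t)^2}\,p(t),\qquad \dot p(t)=-2\kappa\,m(t).
\]

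\emph{Proof of (a).} Since $\kappa=P'(0)$ and $\ddot\tau=2\kappa/\tau$, this linear system has general solution
\[
m(t)=\frac{at+b}{\tau(t)},\qquad p(t)=a\,\tau(t)-(at+b)\,\dot\tau(t),\qquad b=m(0),\quad a=p(0).
\]
By Lemma~\ref{lem:taubis}, $\tau(t)\sim2t\sqrt{\kappa\ln t}$ and $\dot\tau(t)\sim2\sqrt{\kappa\ln t}$, so $m(t)\to0$ in every case. If $(a,b)\neq(0,0)$, integrating $\dot p=-2\kappa m$ and inserting these asymptotics yields $p(t)\sim-a\sqrt\kappa\,t/\sqrt{\ln t}$ when $a\neq0$ and $p(t)\sim-2b\sqrt{\kappa\ln t}$ when $a=0$, hence $|p(t)|\to\infty$; and if $a=b=0$ then $m\equiv p\equiv0$.

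\emph{Proof of (c).} Inserting the change of unknowns \eqref{eq:uvFluid} into \eqref{eq:energy-init} and using \eqref{eq:FG} together with the first integral $\dot\tau^2=4\kappa\ln\tau$ of \eqref{eq:tau}, a direct computation gives the exact identity
\[
E(t)=\theta\,\mathcal E(t)+\frac{\theta\dot\tau}{\tau}K(t)+\theta P'(0)\bigl(2\ln\tau-1\bigr)J(t)-d\,\theta P'(0)\|\Gamma\|_{L^1}\ln\tau+\theta P'(0)\|\Gamma\|_{L^1}\ln\theta,
\]
with $\theta=\|\rho_0\|_{L^1}/\|\Gamma\|_{L^1}$. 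Solving for $J(t)$ and dividing by $2\theta P'(0)\ln\tau$,
\[
\Bigl(1-\tfrac{1}{2\ln\tau}\Bigr)J(t)=\frac{E(t)}{2\theta P'(0)\ln\tau}-\frac{\mathcal E(t)}{2P'(0)\ln\tau}-\frac{\dot\tau\,K(t)}{2\tau P'(0)\ln\tau}-\frac{\|\Gamma\|_{L^1}\ln\theta}{2\ln\tau}+\frac d2\|\Gamma\|_{L^1}.
\]
Now $\mathcal E$ bounded gives $J$ bounded and $|K(t)|\le C\tau$; since moreover $\dot\tau=O(\sqrt{\ln\tau})$ and, by hypothesis, $E(t)=o(\ln t)=o(\ln\tau)$, every term on the right except $\frac d2\|\Gamma\|_{L^1}$ tends to $0$, so $J(t)\to\frac d2\|\Gamma\|_{L^1}=\int_{\R^d}|y|^2\Gamma(y)\,\dd y$.

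\emph{Proof of (b) and the main obstacle.} The bound on $\mathcal E$ yields tightness ($J$ bounded) and uniform integrability ($\int_{\{R>\lambda\}}R\le(\ln\lambda)^{-1}\int R\ln(R/\Gamma)$ since $\Gamma\le1$), hence weak-$L^1$ relative compactness of $\{R(t)\}_{t\ge0}$; any limit point $R_\infty$ satisfies $R_\infty\ge0$, $\int R_\infty=\int\Gamma$. To identify $R_\infty$ I would pass to the rescaled time $s=\ln\tau$: since $\dd t=(\tau/\dot\tau)\dd s$, the hypothesis $\int_0^\infty\mathcal D(t)\,\dd t<\infty$ becomes $\int_0^\infty\bigl[\tfrac1{\tau^2}\int R|U|^2+d\tau^d\!\int(P(\sigma)-\sigma P'(0))\big|_{\sigma=\theta R/\tau^d}+\tfrac{\eps^2}{\tau^2}\int|\nabla\sqrt R|^2+\tfrac{\nu}{\dot\tau\tau^3}\int|\mathbb S_N|^2\bigr]\dd s<\infty$, so there is a sequence $t_n\to\infty$ along which each bracketed term vanishes. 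Testing the momentum equation in \eqref{eq:NSKrevu} against $\nabla g$, $g\in C_c^\infty(\R^d)$, and integrating by parts (using $2\kappa=2P'(0)$) produces an identity of the schematic form
\[
P'(0)\!\int_{\R^d}(\mathcal L R)\,g=\frac{\dd}{\dd t}\!\int RU\cdot\nabla g-\frac1{\tau^2}\!\int(\sqrt RU\otimes\sqrt RU):\nabla^2g+(\text{dissipative terms})-\frac{\tau^d}{\theta}\!\int\bigl(P(\sigma)-\sigma P'(0)\bigr)\big|_{\sigma=\theta R/\tau^d}\Delta g,
\]
with $\mathcal L R=\Delta R+2\DIV(yR)$. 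Along $t_n$ the right-hand side tends to $0$ (convective, pressure and $\mathbb S_N$ terms by the choice of $t_n$; capillary and $\nu\dot\tau/\tau$ terms by the prefactors $\tau^{-2}$, $\dot\tau/\tau$; the time-derivative term by averaging over a unit $t$-window), so $\mathcal L R_\infty=0$ in $\mathcal D'(\R^d)$, and the Liouville rigidity of the Fokker--Planck operator among $L^1$ densities with finite second moment forces $R_\infty=\Gamma$. Finally, since the rescaled kinetic energy $\tau^{-2}\int R|U|^2/(4\kappa\ln\tau)$ tends to $0$, $s\mapsto R(\cdot)$ is asymptotically constant on bounded $s$-intervals (in, say, $W^{-1,1}(\R^d)$), which upgrades convergence along $t_n$ to $R(t)\rightharpoonup\Gamma$ as $t\to\infty$. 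I expect the real difficulty to be exactly here: making the limit passage in the momentum equation rigorous for mere weak solutions — in particular handling $\DIV(\sqrt RU\otimes\sqrt RU)$ and the pressure term for a general convex $P$ growing at infinity — and propagating convergence from the extracted sequence to the full limit, which is precisely the sort of compactness issue addressed in Section~\ref{sec:compactness}.
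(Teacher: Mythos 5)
Your treatment of parts (a) and (c) is correct and runs on the same tracks as the paper's. For (a) you solve the linear system $\dot m=\tau^{-2}p$, $\dot p=-2\kappa m$ explicitly as $m=(at+b)/\tau$, $p=a\tau-(at+b)\dot\tau$; the paper does the equivalent by observing that $\ddot{(\tau m)}=0$ and then integrating $\dot p$. Same justification of the ODEs (the flux, pressure and dissipative contributions are divergences of integrable quantities by Lemma~\ref{lem:aprioriE}), same use of Lemma~\ref{lem:taubis}. For (c) the paper also re-expresses $E(t)$ through \eqref{eq:uvFluid} and compares $\dot\tau^2/2$ with $P'(0)\ln\tau^d$; your variant is slightly cleaner in that you invoke the exact first integral $\dot\tau^2=4\kappa\ln\tau$ (valid here because $\tau(0)=1$, $\dot\tau(0)=0$) to produce an exact identity in which the pseudo-energy $\mathcal E$ absorbs all the uniformly bounded terms, so no asymptotic expansion of $\dot\tau$ is needed. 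You also correctly have $\theta\dot\tau^2/2$ as the coefficient of $J$; the paper's displayed formula for $E(t)$ has a misprint ($\theta\dot\tau/2$ and a missing $\tau^{-2}$ on the Korteweg term), although the downstream computation is carried out with the right powers. Your Gibbs-variational bound $\lambda J\le\int R\ln(R/\Gamma)+\ln\int e^{-(1-\lambda)|y|^2}$ is a legitimate alternative to the interpolation inequality in Lemma~\ref{lem:aprioriE} for deducing $J\in L^\infty_t$ from $\sup_t\mathcal E<\infty$ and mass conservation.

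For (b) your guiding idea --- that the momentum equation, after elimination, degenerates to the stationary Fokker--Planck equation $\mathcal L R=0$, whose only $L^1$ solution with finite second moment and the right mass is $\Gamma$ --- matches the paper's, but the implementation diverges on the technical points that actually carry the proof, and these are exactly where you flag uncertainty. The paper does \emph{not} extract a sequence $t_n$ along which the pointwise-in-time dissipation vanishes; that is generally false (integrability of $\mathcal D$ only gives $\liminf=0$). Instead it takes the divergence of the momentum equation to obtain a closed second-order equation for $R$, changes to the \emph{slow} time $s=\tfrac12\ln\dot\tau\sim\tfrac14\ln\ln t$ (so the coefficients of $\partial_s\bar R$ and $\partial_s^2\bar R$ are $O((\dot\tau)^{-2})\to 0$), considers the translated profiles $\bar R_n(s,\cdot)=\bar R(s+s_n,\cdot)$ on a fixed $s$-window $(0,1)$ for an \emph{arbitrary} sequence $s_n\to\infty$, and uses \eqref{eq:tilde-R-Linftyt} together with Dunford--Pettis to get a weak $L^1((0,1)\times\R^d)$ limit $R_\infty$. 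The integrability of $\mathcal D$ then enters only in space-time-averaged form to kill $\mathcal N_{\alpha_n}$ in $\mathcal D'$, yielding $\partial_s R_\infty=\mathcal L R_\infty$. Separately, the rescaled continuity equation gives $\partial_s R_\infty=0$ because $\frac{\dot\tau}{\tau}\DIV(\bar R_n\bar U_n)\to 0$ in $L^2(0,1;W^{-1,1})$. Combining, $\mathcal L R_\infty=0$ and $R_\infty=\Gamma$ by \cite[Cor.~2.17]{AMTU01}; since the limit is unique and $s_n$ was arbitrary, no extraction is needed and $R(t)\rightharpoonup\Gamma$ for the full limit. Your plan to test the momentum equation against $\nabla g$ is essentially equivalent to taking the divergence, so that part is fine in spirit; but your step ``averaging over a unit $t$-window'' for $\frac{\dd}{\dd t}\int RU\cdot\nabla g$ is really the time-window-plus-Dunford--Pettis mechanism that must be set up explicitly, your choice $s=\ln\tau$ does not make the $\partial_s$ and $\partial_s^2$ coefficients small (unlike $s=\tfrac12\ln\dot\tau$), and your final remark that the vanishing of the rescaled kinetic energy ``upgrades convergence along $t_n$ to $R(t)\rightharpoonup\Gamma$'' is not a proof --- the paper's uniqueness-of-limit argument over arbitrary translated windows is the correct way to remove the subsequence. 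In short: right idea, right target operator, but the slow time scale, the weak $L^1$ compactness over $s$-windows, and the ``arbitrary sequence + unique limit'' device are the missing ingredients that make (b) go through.
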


\begin{remark}
  Unlike in Theorem~\ref{theo:serre}, no smallness assumption is made
  on $U$ at $t=0$ ($U$ may even be linear in space), so there is no
  such geometrical structure on the initial velocity as in
  \cite{Serre97,Gra98}. 
\end{remark}

\begin{remark}\label{rem:hyp-naturelles}
  In view of \eqref{eq:evol-pseudo}--\eqref{eq:dissip-pseudo} and the
    property $\mathcal E\ge 0$, the assumptions of point (b) are
    fairly natural, after noticing that
\begin{align*}
\frac{\dot
     \tau(t)}{\tau(t)^3} \int R \left| \DIV U\right|\dd y & \le
   \frac{\dot \tau}{\tau}\(\int R\dd y\)^{1/2} \(\frac{1}{\tau^4}\int
   R|\DIV U|^2\dd y\)^{1/2}\\
&\lesssim \frac{\dot \tau}{\tau}\(\int R\dd
   y\)^{1/2}  \sqrt{\mathcal D(t)}. 
 \end{align*}
Similarly, at least in the case $\nu=0$, the formal
    conservation of the energy 
    $E$ defined by \eqref{eq:energy-init}, encompasses the assumption of point (c). 
\end{remark}

\begin{remark}[Wasserstein distance]
  The points (b) and (c) of Theorem~\ref{theo:temps-long} imply the
 large time convergence of $R$ to $\Gamma$ in the Wasserstein
 distance $W_2$, defined, for $\nu_1$
and $\nu_2$ probability measures, by 
\begin{equation*}
  W_p(\nu_1,\nu_2)=\inf \left\{ \left(\int_{{\mathbb R}^d\times
    {\mathbb R}^d}|x-y|^p\dd\mu(x,y)\right)^{1/p};\quad (\pi_j)_\sharp
\mu=\nu_j\right\}, 
\end{equation*}
where $\mu$ varies among all probability measures on ${\mathbb R}^d\times
{\mathbb R}^d$, and $\pi_j:{\mathbb R}^d\times {\mathbb R}^d\to
{\mathbb R}^d$ denotes the canonical 
projection onto the $j$-th factor. This implies, for instance, the
convergence of fractional momenta (see
e.g. \cite[Theorem~7.12]{Vi03}) 
\begin{equation}\label{eq:moment-int}
  \int |y|^{2s}R(t,y) \dd y\Tend t \infty \int
  |y|^{2s}\Gamma(y)\dd y,\quad 0\le s\le 1.
\end{equation} 
\end{remark}
Back to the initial unknowns $(\rho,u)$, Theorem~\ref{theo:temps-long}
and \eqref{eq:uvFluid} yield
\begin{equation*}
  \rho(t,x)\Eq t \infty
  \frac{\|\rho_0\|_{L^1(\R^d)}}{\pi^{d/2}}\frac{1}{\tau(t)^d}e^{-|x|^2/\tau(t)^2},
\end{equation*}
as announced in Theorem~\ref{thm_mainrough}, 
where the symbol $\sim$ means that only a weak limit is
considered. However, in the special case of Gaussian initial data considered in
Section~\ref{sec:explicit}, it is easy to check that all the
assumptions of Theorem~\ref{theo:temps-long} are satisfied, and moreover that
$R(t,\cdot)\to \Gamma$ strongly in $L^1(\R^d)$. Finally, another
consequence of Lemma~\ref{lem:taubis},  the (proof of the) last point in
Theorem~\ref{theo:temps-long}, and \eqref{eq:uvFluid} is 
\begin{equation*}
  \frac{1}{2}\int_{\R^d}\rho(t,x) |u(t,x)|^2 \dd x\Eq t \infty
  P'(0)d\|\rho_0\|_{L^1(\R^d)}  \, \ln t \Eq t
  \infty  -P'(0)\int_{\R^d} \rho(t,x)\ln \rho(t,x)\dd x.
\end{equation*}
This shows that indeed, no \emph{a priori}
information can be directly extracted from the energy $E$ defined in
\eqref{eq:energy-init}.


\section{From Gaussians to Theorem~\ref{theo:temps-long}} 
\label{sec:longtime}

This part of the paper is devoted to the large time behavior of
solutions to \eqref{eq:euler-isotherme} and 
its variants.  We first compute explicit Gaussian solutions and then
proceed to the proof of Theorem~\ref{theo:temps-long}.

\subsection{Explicit solution}
\label{sec:explicit}

In this section, we resume and generalize some results established in
\cite{Yuen12,CFY17}. The generalizations concern two aspects: we allow
densities and velocities which are not centered at the same point
(hence $\overline x_j$ and $c_j$ below), and we consider the quantum
Navier-Stokes equation. 

\subsubsection{Euler and Newtonian Navier-Stokes equations}

We recall the compressible Euler equation for isothermal fluids on $\R^d$
\begin{equation}\label{eq:euler}
  \left\{
    \begin{aligned}
 & \d_t \rho+\DIV\(\rho u\)=0,\\
     &\d_t (\rho u) +\DIV(\rho u\otimes u)+\kappa \nabla
        \rho= 0,
    \end{aligned}
\right.
\end{equation}
where $\kappa>0$. 
As noticed in \cite{Yuen12}, \eqref{eq:euler} has a family of
explicit solutions with Gaussian densities and affine velocities
centered at the same point. Allowing different initial centers for
these quantities leads to considering 
\begin{equation}
  \label{eq:CIgauss}
  \rho(0,x) = b_0 e^{-\sum_{j=1}^d \alpha_{0j}x_j^2},\quad u(0,x) = 
  \begin{pmatrix}
    \beta_{01}x_1\\
\vdots\\
\beta_{0d}x_d
  \end{pmatrix}+
  \begin{pmatrix}
    c_{01}\\
\vdots\\
c_{0d}
  \end{pmatrix},
\end{equation}
with $b_0,\alpha_{0j}>0$, $\beta_{0,j},c_{0,j}\in \R$.
Seeking a
solution of the form
\begin{equation*}
  \rho(t,x) = b(t) e^{-\sum_{j=1}^d \alpha_{j}(t)\(x_j-\overline
  x_j\)^2},\quad u(t,x) = 
\begin{pmatrix}
    \beta_{1}(t)x_1\\
\vdots\\
\beta_{d}(t)x_d
  \end{pmatrix}+
  \begin{pmatrix}
    c_{1}(t)\\
\vdots\\
c_{d}(t)
  \end{pmatrix},
\end{equation*}
and plugging this ansatz into \eqref{eq:euler}, we obtain a set of
ordinary differential equations:
\begin{align}
 & \dot \alpha_j +2\alpha_j \beta_j=0, \quad
 \dot\beta_j +\beta_j^2-2\kappa \alpha_j=0, \label{eq:beta}\\
& \dot{\overline x}_j = \beta_j \overline x_j +c_j,\quad 
 \dot b = b\sum_{j=1}^d \(\dot \alpha_j {\overline x}_j^2+2
             \alpha_j {\overline x}_j \dot{\overline x}_j -2\alpha_j c_j \overline x_j
             -\beta_j \),\label{eq:b}\\
& \dot c_j +\beta_j c_j +2\kappa \alpha_j \overline x_j =
    0.\label{eq:c}
\end{align}
Mimicking \cite{LiWa06}, seeking $\alpha_j$ and $\beta_j$ of the form
\begin{equation*}
 \alpha_j(t) =\frac{\alpha_{0j}}{\tau_j(t)^2},\quad \beta_j(t) =
   \frac{\dot \tau_j(t)}{\tau_j(t)},
\end{equation*}
we check that the two equations in  \eqref{eq:beta} are satisfied if and only if
\begin{equation}
  \label{eq:tauj}
  \ddot \tau_j =
  \frac{2\kappa \alpha_{0j}}{\tau_j},\quad
    \tau_j(0)=1,\quad \dot \tau_j(0)=\beta_{0j},
\end{equation}
and we find
\begin{equation*}
  b(t)=\frac{b_0}{\prod_{j=1}^d \tau_j(t)},\quad
  \overline x_j(t)= c_{0j}t,\quad c_j(t) =c_{0j}\(1-\frac{\dot
    \tau_j(t)}{\tau_j(t)}t\). 
\end{equation*}
\begin{remark}
  Since the velocity is affine in $x$, this computation also yields
  explicit solutions for the isothermal (Newtonian) Navier-Stokes equations, but
  not for its quantum counterpart, as we will see below.
\end{remark}

\subsubsection{Korteweg and quantum Navier-Stokes equations}

 As in
\cite{CFY17}, we generalize \eqref{eq:euler} by allowing the presence
of a Korteweg term ($\eps>0$), and we extend this contribution by
allowing a quantum dissipation (quantum Navier-Stokes equation, when
$\nu>0$). We recall the isothermal Korteweg and quantum Navier-Stokes equations
\begin{equation}\label{eq:korteweg}
  \left\{
    \begin{aligned}
 & \d_t \rho+\DIV\(\rho u\)=0,\\
     &\d_t (\rho u) +\DIV(\rho u\otimes u)+\kappa \nabla
        \rho= \frac{\eps^2}{2}\rho\nabla\( \frac{\Delta
        \sqrt{\rho}}{\sqrt{\rho}}\)+\nu \DIV \(\rho D(u)\),
    \end{aligned}
\right.
\end{equation}
with $\eps,\nu\ge 0$, and where the Korteweg term is also equal to
$$\frac{\eps^2}{4}\nabla \Delta 
      \rho -\eps^2\DIV\(\nabla\sqrt\rho\otimes\nabla \sqrt\rho\),
$$
which is called the Bohm's identity.      
Proceeding as in the previous subsection,
\eqref{eq:beta}--\eqref{eq:c} become

\begin{align}
 & \dot \alpha_j +2\alpha_j \beta_j=0, \quad
 \dot\beta_j +\beta_j^2-2\kappa \alpha_j=\eps^2
        \alpha_j^2-\nu \alpha_j\beta_j, \label{eq:beta2}\\
& \dot{\overline x}_j = \beta_j \overline x_j +c_j,\quad 
 \dot b = b\sum_{j=1}^d \(\dot \alpha_j {\overline x}_j^2+2
             \alpha_j {\overline x}_j \dot{\overline x}_j -2\alpha_j c_j \overline x_j
             -\beta_j \),\notag\\
& \dot c_j +\beta_j c_j +2\kappa \alpha_j \overline x_j =
     -\eps^2\alpha_j^2 \overline x_j+\nu \alpha_j\beta_j\overline x_j.\notag
\end{align}
Again, we seek $\alpha_j$ and $\beta_j$ of the form
\begin{equation*}
 \alpha_j(t) =\frac{\alpha_{0j}}{\tau_j^{\eps,\nu}(t)^2},\quad \beta_j(t) =
   \frac{\dot \tau^\eps_j(t)}{\tau^{\eps,\nu}_j(t)},
\end{equation*}
we check that the two equations in  \eqref{eq:beta2} are satisfied if and only if
\begin{equation}
  \label{eq:tauj-quant}
     \ddot \tau^{\eps,\nu}_j =
  \frac{2\kappa
    \alpha_{0j}}{\tau^{\eps,\nu}_j}+\eps^2\frac{\alpha_{0j}^2}{(\tau^{\eps,\nu}_j)^3}-\nu
  \alpha_{0j}\frac{\dot \tau^{\eps,\nu}_j}{(\tau^{\eps,\nu}_j)^2},\quad 
    \tau^{\eps,\nu}_j(0)=1,\quad \dot \tau^{\eps,\nu}_j(0)=\beta_{0j},
\end{equation}
and we find, like before,
\begin{equation*}
  b(t)=\frac{b_0}{\prod_{j=1}^d \tau^{\eps,\nu}_j(t)},\quad
  \overline x_j(t)= c_{0j}t,\quad c_j(t) =c_{0j}\(1-\frac{\dot
    \tau^{\eps,\nu}_j(t)}{\tau^{\eps,\nu}_j(t)}t\). 
\end{equation*}

\subsubsection{A universal behavior}

It is obvious that the Euler equation \eqref{eq:euler} is a particular
case of \eqref{eq:korteweg}, by taking $\eps=\nu=0$. The Korteweg
equation ($\nu=0$)
is in turn related to the nonlinear Schr\"odinger equation, through
Madelung transform. In the present case, consider the logarithmic
Schr\"odinger equation in the semi-classical regime,
\begin{equation}
  \label{eq:logNLS}
  i\eps\d_t \psi^\eps +\frac{\eps^2}{2} \Delta \psi^\eps =\kappa
  \ln\(|\psi^\eps|^2\)\psi^\eps .
\end{equation}
The Madelung transform consists in writing the solution as
$\psi^\eps=\sqrt\rho e^{i\phi/\eps}$, with $\rho\ge 0$ and $\phi$
real-valued. Plugging this form into \eqref{eq:logNLS} and
identifying the real and imaginary parts yields
\eqref{eq:korteweg}, with the identification $u=\nabla \phi$. The
model \eqref{eq:logNLS} was introduced in 
\cite{BiMy76}, where the authors noticed that this equation possessed
explicit (complex) Gaussian solutions: the phase $\phi$ is then
quadratic, hence a velocity $u=\nabla \phi$ which is linear (or
affine). For fixed $\eps>0$, the large time dynamics for
\eqref{eq:logNLS} was studied in \cite{CaGa-p}. 
\smallbreak

As a matter of fact, the presence of a Korteweg ($\eps>0$) or quantum
Navier-Stokes ($\nu>0$) term does not alter the large time dynamics
provided in Lemma~\ref{lem:taubis}:
\begin{lemma}\label{lem:ODE2}
  Let $\alpha,\kappa>0$, $\beta\in \R$, and $\eps,\nu\ge  0$. Consider 
\begin{equation}
  \label{eq:tau-quant}
     \ddot \tau^{\eps,\nu} =
  \frac{2\kappa
  }{\tau^{\eps,\nu}}+\frac{\eps^2}{(\tau^{\eps,\nu})^3}-\nu
  \frac{\dot \tau^{\eps,\nu}}{(\tau^{\eps,\nu})^2},\quad 
    \tau^{\eps,\nu}(0)=\alpha,\quad \dot \tau^{\eps,\nu}(0)=\beta.
\end{equation}
It has a unique solution $\tau^{\eps,\nu}\in C^2(0,\infty)$, and it satisfies, as
$t\to \infty$, 
\begin{equation*}
  \tau^{\eps,\nu}(t)\Eq t \infty 2t \sqrt{\kappa \ln t} \, ,\quad \dot
  \tau^{\eps,\nu}(t)\Eq t \infty 2\sqrt{\kappa\ln  t}. 
\end{equation*} 
\end{lemma}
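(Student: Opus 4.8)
The plan is to reduce the ODE \eqref{eq:tau-quant} to the already-analyzed case of Lemma~\ref{lem:taubis} by treating the $\eps^2/(\tau^{\eps,\nu})^3$ and $-\nu\dot\tau^{\eps,\nu}/(\tau^{\eps,\nu})^2$ terms as lower-order perturbations once one knows a priori that $\tau^{\eps,\nu}$ grows. First I would establish local existence and uniqueness of $\tau^{\eps,\nu}\in C^2$ near $t=0$ by the Cauchy--Lipschitz theorem, after rewriting the second-order equation as a first-order system in $(\tau,\dot\tau)$; the right-hand side is smooth as long as $\tau>0$. To promote this to a global-in-time solution with $\tau^{\eps,\nu}$ bounded below away from $0$, I would use an energy/monotonicity argument: multiply the equation by $\dot\tau^{\eps,\nu}$ and integrate, which gives
\begin{equation*}
  \frac{\dd}{\dd t}\left(\frac{(\dot\tau^{\eps,\nu})^2}{2} - 2\kappa\ln\tau^{\eps,\nu} + \frac{\eps^2}{2(\tau^{\eps,\nu})^2}\right) = -\nu\frac{(\dot\tau^{\eps,\nu})^2}{(\tau^{\eps,\nu})^2}\le 0.
\end{equation*}
Thus the quantity $\mathcal H(t):=\tfrac12(\dot\tau^{\eps,\nu})^2 - 2\kappa\ln\tau^{\eps,\nu} + \tfrac{\eps^2}{2(\tau^{\eps,\nu})^2}$ is non-increasing, which prevents $\tau^{\eps,\nu}$ from reaching $0$ (there the $\ln$ and the $1/\tau^2$ terms would force $\mathcal H\to+\infty$) and, since $\ddot\tau^{\eps,\nu}>0$ exactly when the combination of terms on the right is positive, one checks that $\dot\tau^{\eps,\nu}$ eventually becomes and stays positive, so $\tau^{\eps,\nu}$ is increasing for large $t$ and hence bounded below by a positive constant on $[t_0,\infty)$; global existence follows.

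Next I would extract the leading-order asymptotics. From $\mathcal H$ non-increasing and bounded below (it is bounded below because $-2\kappa\ln\tau$ is the only term that could go to $-\infty$, but $\tau$ grows at most like a solution of $\ddot\tau=2\kappa/\tau + \eps^2/\tau^3$, which is itself controlled), $\mathcal H$ has a finite or $-\infty$... more carefully: once $\tau^{\eps,\nu}\to\infty$, the perturbative terms $\eps^2/(2\tau^2)\to 0$ and the dissipation integral $\nu\int (\dot\tau/\tau)^2\,\dd t$ converges or not; in either case the balance $\tfrac12\dot\tau^2 \approx 2\kappa\ln\tau$ emerges, giving $\dot\tau^{\eps,\nu}\sim 2\sqrt{\kappa\ln\tau^{\eps,\nu}}$, and then the separated-variable integration $\int \dd\tau/\sqrt{\ln\tau}\sim 2t\sqrt\kappa$ combined with $\ln\tau\sim\ln t$ yields $\tau^{\eps,\nu}(t)\sim 2t\sqrt{\kappa\ln t}$ and $\dot\tau^{\eps,\nu}(t)\sim 2\sqrt{\kappa\ln t}$, exactly as in Lemma~\ref{lem:taubis}. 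The cleanest route is probably a comparison/bootstrap argument: first show $\tau^{\eps,\nu}\to\infty$ and $\dot\tau^{\eps,\nu}\to\infty$ using $\mathcal H$, then show $\eps^2/(\tau^{\eps,\nu})^3 = o(1/\tau^{\eps,\nu})$ and $\nu\dot\tau^{\eps,\nu}/(\tau^{\eps,\nu})^2 = o(1/\tau^{\eps,\nu})$ along the solution (the former is immediate; the latter uses $\dot\tau^{\eps,\nu}\sim\sqrt{\ln\tau^{\eps,\nu}}=o(\tau^{\eps,\nu})$), so the equation is a vanishing perturbation of $\ddot\tau=2\kappa/\tau$, and then invoke the same squeezing argument that proves Lemma~\ref{lem:taubis} with the remainder absorbed.

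The main obstacle I anticipate is making the perturbation argument rigorous rather than merely formal: one has to control the dissipative term $\nu\dot\tau^{\eps,\nu}/(\tau^{\eps,\nu})^2$ uniformly, which requires an a priori two-sided bound on $\dot\tau^{\eps,\nu}$ (it should neither blow up nor decay to $0$), and this is somewhat delicate precisely because the dissipation could in principle slow the growth. The resolution is the energy identity above: since $\mathcal H$ is non-increasing, $\tfrac12(\dot\tau^{\eps,\nu})^2\le \mathcal H(t_0) + 2\kappa\ln\tau^{\eps,\nu}(t)$, giving the upper bound $\dot\tau^{\eps,\nu}=O(\sqrt{\ln\tau^{\eps,\nu}})$; for the lower bound one shows $\ddot\tau^{\eps,\nu}>0$ once $\dot\tau^{\eps,\nu}$ is small and $\tau^{\eps,\nu}$ is moderately large, because then the driving term $2\kappa/\tau^{\eps,\nu}$ (plus the positive Korteweg term) dominates the dissipative term $\nu\dot\tau^{\eps,\nu}/(\tau^{\eps,\nu})^2$. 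With these bounds in hand the proof is a routine adaptation of the argument sketched in Appendix~\ref{sec:ODE} for Lemma~\ref{lem:taubis}, so I would present it concisely, referring back there for the parts that are identical and detailing only the treatment of the two extra terms.
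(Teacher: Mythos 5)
Your overall strategy matches the paper's: the energy identity $\dot{\mathcal H}=-\nu(\dot\tau/\tau)^2\le 0$ with $\mathcal H=\tfrac12\dot\tau^2-2\kappa\ln\tau+\tfrac{\eps^2}{2\tau^2}$ is exactly the paper's \eqref{eq:taupoint-quant} in differential form, and the steps establishing local existence, the uniform lower bound $\tau\ge e^{-C/(4\kappa)}$, and the eventual positivity of $\dot\tau$ are all sound (though lightly sketched).

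However, the crucial step has a genuine gap. To conclude $\dot\tau^2\sim 4\kappa\ln\tau$ you need $\mathcal H$ to be \emph{bounded below}, equivalently $\nu\int_0^\infty(\dot\tau/\tau)^2\,\dd t<\infty$, and your justification does not establish this. Arguing that ``$\tau$ grows at most like a solution of $\ddot\tau=2\kappa/\tau+\eps^2/\tau^3$, which is itself controlled'' gives an \emph{upper} bound on $\tau$ and hence an \emph{upper} bound on $-2\kappa\ln\tau$; it is perfectly compatible with $\mathcal H(t)\to-\infty$ at rate $\ln t$ if $\dot\tau$ stays bounded. Your fallback claim that ``in either case the balance $\tfrac12\dot\tau^2\approx 2\kappa\ln\tau$ emerges'' is false: if the dissipation integral diverges, the integrated identity forces $4\kappa\ln\tau-\dot\tau^2\to+\infty$, which is consistent with $\dot\tau^2=o(\ln\tau)$ and hence with the asymptotics failing. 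Finally, the lower bound you offer at the end --- that $\ddot\tau>0$ once $\dot\tau$ is small --- only yields $\dot\tau\gtrsim 1$, not the needed $\dot\tau\gtrsim\sqrt{\ln\tau}$, so it cannot close the argument.

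The paper fills this gap indirectly: it assumes $\int(\dot\tau/\tau)^2=\infty$, deduces $4\kappa\ln\tau-\dot\tau^2\to\infty$, and then an integration by parts on $\int_{t_n}^t(\dot\tau/\tau)^2$ produces a self-improving inequality that forces this integral to be bounded, a contradiction. A more economical repair that fits your set-up also exists: once $\dot\tau>0$ for $t\ge T$ and $\dot\tau\le\sqrt{2\mathcal H(0)+4\kappa\ln\tau}$ (which is just $\mathcal H\le\mathcal H(0)$), write
$\left(\tfrac{\dot\tau}{\tau}\right)^2\le\tfrac{\dot\tau}{\tau}\cdot\tfrac{\sqrt{2\mathcal H(0)+4\kappa\ln\tau}}{\tau}$
and substitute $u=\tau(s)$ to get
\[
\int_T^\infty\left(\frac{\dot\tau}{\tau}\right)^2\dd s\le\int_{\tau(T)}^\infty\frac{\sqrt{2\mathcal H(0)+4\kappa\ln u}}{u^2}\,\dd u<\infty .
\]
Either way this step must be made explicit; as written, your proposal does not establish the claimed asymptotics.
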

We present a sketchy proof of Lemma~\ref{lem:ODE2} in
Appendix~\ref{sec:ODE}.  
\smallbreak

Lemma~\ref{lem:ODE2} shows that $\eps$ and $\nu$ do not influence the
large time dynamics in \eqref{eq:tauj}. In particular, 
\begin{align*}
 &\alpha_j(t)\Eq t \infty \(2t\sqrt{\kappa \ln t}\)^{-2},\quad
 \beta_j(t)\Eq t \infty \frac{1}{t},\quad
b(t)\Eq t \infty \frac{\|\rho(0)\|_{L^1}}{\pi^{d/2}}\frac{1}{(2t\sqrt{\kappa \ln
  t})^{d}},\\
& \overline x_j(t)=c_{0j}t=o\(\alpha_j(t)\),\quad
c_j(t)\Tend t \infty 0,
\end{align*}
thus revealing some unexpected universal behavior for the explicit
solutions to \eqref{eq:korteweg}. This is an important hint to believe
in  Theorem~\ref{theo:temps-long}, as well as a precious guide in the
computation, in particular in the derivation of the change of unknown
functions \eqref{eq:uvFluid}.

\subsection{Proof of Theorem~\ref{theo:temps-long}}
\label{sec:convergence}

For the end of this section, we consider a pressure $P$ satisfying
Assumption~\ref{hyp:P-temps-long}. 
As a preamble, we prove a useful a priori estimate:

\begin{lemma}\label{lem:aprioriE}
Consider a density $R(t,y)$ and a velocity-field $U(t,y).$ Suppose that the mass $\int_{\R^d}R(t,y)\dd y$ is bounded and the pseudo-energy
\begin{equation*}
  \mathcal E(t) = \frac{1}{2\tau^2}\int_{\R^d}
  R|U|^2+\frac{\eps^2}{2\tau^2} \int_{\R^d}|\nabla \sqrt R|^2 
  +\kappa\int_{\R^d} (R|y|^2 +  R \ln R) +
  \frac{\tau^d}{\theta}\int_{\R^d} G\(\frac{ \theta R}{\tau^d}\)  
\end{equation*}
is bounded from above for positive times, $\mathcal E(t)\le
\Lambda$ for all $t\ge 0$. Then there exists $C_0>0$ such that for all
$t\ge 0$,
\begin{equation*}
\frac{1}{2\tau^2}\int_{\R^d} R|U|^2 +\frac{\eps^2}{2\tau^2} \int_{\R^d}|\nabla \sqrt R|^2
+ \kappa \int_{\R^d}  R (1+|y|^2 + |\ln  R|)  +
\tau^d\int_{\R^d} G\(\frac{ \theta R}{\tau^d}\)
\le C_0.
\end{equation*}
\end{lemma}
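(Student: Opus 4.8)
The plan is to show that the only "dangerous" term in $\mathcal E$ is the negative part of $\int R\ln R$, and that this negative part is controlled by the (positive) moment term $\kappa\int R|y|^2$ together with the bounded mass, using the elementary entropy inequality against a Gaussian. The key observation is the identity already recorded in the excerpt,
\[
\int_{\R^d} (R|y|^2 + R\ln R) = \int_{\R^d} R\ln\frac{R}{\Gamma},
\]
with $\Gamma(y)=e^{-|y|^2}$, and the fact that $G\ge 0$ and $\frac{1}{2\tau^2}\int R|U|^2\ge 0$, $\frac{\eps^2}{2\tau^2}\int|\nabla\sqrt R|^2\ge 0$. So all four terms in $\mathcal E$ except possibly $\kappa\int R\ln(R/\Gamma)$ are non-negative; but in fact the relative entropy $\int R\ln(R/\Gamma)$ is itself bounded below in terms of the mass. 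Indeed, by Jensen's inequality (or the non-negativity of relative entropy after normalizing), for $R\ge 0$ with $\int R = m$ one has $\int R\ln(R/\Gamma)\ge m\ln\!\big(m/\|\Gamma\|_{L^1}\big)$, hence $\int R\ln(R/\Gamma)\ge -C_1$ for a constant $C_1$ depending only on the bound on the mass. Combining, each individual non-negative term is $\le \Lambda + \kappa C_1$, i.e. bounded by a constant $C_0$.

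Concretely, first I would fix the mass bound $\int R(t,\cdot)\le m_0$ and derive $\int R\ln(R/\Gamma)\ge -C_1(m_0)$ as above. Then, writing
\[
\mathcal E(t) = \underbrace{\tfrac{1}{2\tau^2}\!\int R|U|^2}_{\ge 0}
+\underbrace{\tfrac{\eps^2}{2\tau^2}\!\int|\nabla\sqrt R|^2}_{\ge 0}
+\kappa\!\int R\ln\tfrac{R}{\Gamma}
+\underbrace{\tfrac{\tau^d}{\theta}\!\int G\!\big(\tfrac{\theta R}{\tau^d}\big)}_{\ge 0},
\]
the bound $\mathcal E(t)\le\Lambda$ forces each of the three non-negative terms to be $\le \Lambda+\kappa C_1$, and also forces $\kappa\int R\ln(R/\Gamma)\le \Lambda + $ (nothing), so $\kappa\int(R|y|^2+R\ln R)\le \Lambda$. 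It remains to recover a bound on $\kappa\int R(1+|y|^2+|\ln R|)$, i.e. to replace $R\ln R$ by $R|\ln R|$. For this I would split $\int R|\ln R| = \int R\ln R + 2\int R(\ln R)_-$ and control the negative part $\int R(\ln R)_-$ by the moment: on the set $\{R<1\}$ one has $0\le R(\ln R)_- = R\ln(1/R)$, and using $R\ln(1/R)\le C_\delta R^{1-\delta}$ for any $\delta\in(0,1)$ together with, say, $R^{1-\delta}\le C\,e^{-|y|^2/2} + R\,\mathbf 1_{\{R\ge \text{something}\}}$ — more cleanly, split $\int_{\{R<e^{-|y|^2}\}} R\ln(1/R)$ where $R\ln(1/R)\le e^{-|y|^2}|y|^2\in L^1$, and $\int_{\{e^{-|y|^2}\le R<1\}} R\ln(1/R)\le \int R|y|^2$, which is already bounded. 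Adding the bounded mass term $\kappa\int R = \kappa m_0$ closes the estimate with an explicit $C_0$ depending only on $\Lambda$, $m_0$, $\theta$ and $d$.

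The only mildly delicate point — the "main obstacle," such as it is — is the passage from the one-sided bound on $\int R\ln R$ to the two-sided bound on $\int R|\ln R|$, i.e. controlling where $R$ is very small. This is routine but must be done carefully because $R$ is merely $L^1$ with finite second moment; the trick is precisely to compare $R$ with the Gaussian weight $e^{-|y|^2}$ pointwise and absorb the region $\{R\ge e^{-|y|^2}\}$ into the already-controlled moment $\int R|y|^2$ while handling $\{R< e^{-|y|^2}\}$ by the explicit bound $R\ln(1/R)\le e^{-|y|^2}\ln(e^{|y|^2}) = |y|^2 e^{-|y|^2}$, which is integrable with an explicit constant. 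Note this argument is uniform in $t$, uses only $\mathcal E(t)\le\Lambda$ and the mass bound, and is completely insensitive to the values of $\eps,\nu\ge 0$ and to $\tau(t)\ge 1$, so it yields the stated time-independent constant $C_0$.
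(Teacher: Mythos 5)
There is a genuine gap in the step where you assert that $\int R|y|^2$ is ``already bounded.'' What the hypothesis $\mathcal E(t)\le\Lambda$ together with non-negativity of the kinetic, gradient, and $G$ terms actually gives is a bound on the \emph{sum} $\kappa\int(R|y|^2 + R\ln R)\le\Lambda$, and $\int R\ln R$ can be arbitrarily negative even at fixed mass (take $R_\lambda(y)=\lambda^d e^{-\lambda^2|y|^2}$ and let $\lambda\to 0$). To extract a bound on $\int R|y|^2$ alone, you must bound $\int_{\{R<1\}}R\ln(1/R)$, which you then propose to bound on $\{e^{-|y|^2}\le R<1\}$ by $\int R|y|^2$ --- this is circular. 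Writing $A=\int R|y|^2$ and $N=\int_{\{R<1\}}R\ln(1/R)$, your split gives $N\le C_2+A$ and the energy bound gives $A-N\le\Lambda/\kappa$, which combine to the vacuous inequality $0\le\Lambda/\kappa+C_2$. The problem is that your Gaussian comparator $e^{-|y|^2}$ has exactly the same decay rate as the weight $|y|^2$ in the moment, putting you on the borderline. Comparing $R$ to a slower Gaussian $e^{-\lambda|y|^2}$ with $\lambda<1$ would give $N\le C_2(\lambda)+\lambda A$, hence $(1-\lambda)A\le\Lambda/\kappa+C_2(\lambda)$, which does close. (You in fact write $e^{-|y|^2/2}$ at one point before switching to the problematic $e^{-|y|^2}$.) Separately, the pointwise estimate $R\ln(1/R)\le|y|^2e^{-|y|^2}$ on $\{R<e^{-|y|^2}\}$ fails for $|y|<1$, since $x\mapsto x\ln(1/x)$ is not monotone on $(0,1)$; this is easy to repair but should be stated correctly.

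It is worth noting that the paper avoids the circularity by a different device: it defines $\mathcal E_+$ by dropping the negative part of the entropy, observes $\mathcal E_+\le\Lambda+P'(0)\int_{\{R<1\}}R\ln(1/R)$, and bounds the latter via $\int_{\{R<1\}}R\ln(1/R)\lesssim\int R^{1-\eta}\lesssim\|R\|_{L^1}^{1-\eta-d\eta/2}\||y|^2R\|_{L^1}^{d\eta/2}$ with $\eta<2/(d+2)$, so that $d\eta/2<1$. Since $\||y|^2R\|_{L^1}\lesssim\mathcal E_+$, this yields a self-improving inequality $\mathcal E_+\le\Lambda+C\,\mathcal E_+^{d\eta/2}$ with sub-unit exponent, which bounds $\mathcal E_+$. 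Your relative-entropy lower bound $\int R\ln(R/\Gamma)\ge -C_1(m_0)$ is correct and elegant, and it does directly bound the kinetic, gradient, and $G$ terms --- that part of your argument is a legitimate and arguably cleaner route than the paper's. But it does not by itself control the moment, and the final absorption step needs the strict-inequality Gaussian comparator described above (or the paper's interpolation) to be non-vacuous.
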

In view of Remark~\ref{rem:hyp-naturelles}, the assumption of this
lemma is a consequence of the assumptions of
Theorem~\ref{theo:temps-long}. 
\begin{proof}
  We note that since $P$ is convex, $G\ge0 $, so all the terms in
  $\mathcal E$ but one are non-negative. The functional
  \begin{align*}
    \mathcal E_+(t)&  := \frac{1}{2\tau^2}\int
  R|U|^2+\frac{\eps^2}{2\tau^2} \int|\nabla \sqrt R|^2 
  +P'(0)\(\int R|y|^2 + \int_{R\ge 1} R \ln R\) \\
&\quad+ \frac{\tau^d}{\theta}\int G\(\frac{\theta  R}{\tau^d}\) 
  \end{align*}
is the sum of non-negative terms, and
\begin{equation*}
  \mathcal E_+(t)\le \Lambda+ P'(0)\int_{R< 1} R \ln \frac{1}{R} .
\end{equation*}
Note that for any $\eta>0$,
\begin{equation*}
  \int_{R< 1} R \ln \frac{1}{R} \lesssim \int_{\R^d} R^{1-\eta}.
\end{equation*}
Using the interpolation inequality
\begin{equation*}
  \int_{\R^d} R^{1-\eta} \le  C_{\eta}
  \|R\|_{L^1(\R^d)}^{1-\eta-d\eta/2}\||y|^2
  R\|_{L^1(\R^d)}^{d\eta/2},\quad 0<\eta<\frac{2}{d+2},
\end{equation*}
we infer 
 \begin{equation*}
  \mathcal E_+(t)\le \Lambda+ C_\eta
  P'(0)\|R\|_{L^1(\R^d)}^{1-\eta-d\eta/2}\||y|^2
  R\|_{L^1(\R^d)}^{d\eta/2}. 
\end{equation*}
Choosing $\eta<2/(d+2)$ and invoking the boundedness of mass, we deduce
that $ \mathcal E_+(t)$ remains uniformly bounded for  $t \ge 0$. The
lemma follows by recalling the estimate used above,
\begin{equation*}
\int_{R< 1} R \ln \frac{1}{R} \lesssim
\|R\|_{L^1(\R^d)}^{1-\eta-d\eta/2}\||y|^2 R\|_{L^1(\R^d)}^{d\eta/2}. 
\end{equation*}
\end{proof}

\begin{remark}
  In view of the evolution of $\mathcal E$ given by
  \eqref{eq:evol-pseudo}, and given that the dissipation $\mathcal D$
  defined by \eqref{eq:dissip-pseudo} is non-negative, the assumptions
  of Lemma~\ref{lem:aprioriE} are fairly natural. The uniform
  boundedness of $\mathcal E$, consequence of the conclusion of
  Lemma~\ref{lem:aprioriE}, explains why the assumption
\[
\int_0^\infty \mathcal D(t)\dd t<\infty,
\]
made in Theorem~\ref{theo:temps-long}, is quite sensible, even without
invoking the  Csisz\'ar-Kullback inequality to claim that $\mathcal
E\ge 0$. 
\end{remark}


\begin{proof}[Proof of Theorem \ref{theo:temps-long}]
We assume that $(\sqrt{R},\sqrt{R}U,\mathbb T_N,\mathbb S_K)$ is a global weak solution of \eqref{eq:RU}, in the sense of Definition \ref{def:sol-globale},
with constant mass.
\smallbreak

\noindent
{\bf (a)} The proof of the first point is a rather straightforward consequence of
Lemma~\ref{lem:aprioriE}. Define
\begin{equation*}
 \mathcal I_1(t)=\int_{\R^d} (RU)(t,y)\dd y,\quad \mathcal I_2(t)= \int_{\R^d}y
  R(t,y)\dd y.
\end{equation*}
We compute
\begin{align*}
  \dot {\mathcal I}_1 & = -\frac{1}{\tau^2}\int_{\R^d}\DIV ( R U \otimes U)
      -2\kappa\mathcal  I_2 
     -\frac{\tau^d}{\theta}\int_{\R^d}\nabla P\( \tfrac{\theta R}{\tau^d}\) +
      \frac{\eps^2}{2\tau^2}\int_{\R^d} \DIV \( \mathbb S_K  \)\\
&\quad +\frac{\nu}{\tau^2} \int_{\R^d} \DIV (\sqrt R \mathbb S_N) + \frac{\nu \dot
             \tau}{\tau} \int_{\R^d}\nabla R. 
\end{align*}
By Lemma~\ref{lem:aprioriE}, all the functions whose
divergence or gradient is present above are integrable, and 
we simply get
\[
\dot {\mathcal I}_1 = -2\kappa \mathcal I_2.
\]
Similarly, we compute
\begin{align*}
  \dot{\mathcal  I}_2 & = -\frac{1}{\tau^2}\int_{\R^d}y \DIV (RU)
             =\frac{1}{\tau^2} \int_{\R^d} RU\equiv
                        \frac{1}{\tau^2}\mathcal I_1, 
\end{align*}
where we have used the property $ R |y||U|\in L^\infty_{\rm loc}(0,\infty;L^1(\R^d))$,
which stems from  Lemma~\ref{lem:aprioriE} and Cauchy-Schwarz
inequality. Therefore,
\[ 
\dot {\mathcal I}_1 = -2\kappa \mathcal I_2,\quad \dot {\mathcal I}_2
= \frac{1}{\tau^2}\mathcal I_1. 
\]
Introducing $J_2=\tau \mathcal I_2$, we readily compute $\ddot J_2=0$, hence
\begin{equation*}
  \mathcal I_2(t) = \frac{-\mathcal I_1(0)t+\mathcal
    I_2(0)}{\tau(t)},\quad \mathcal  I_1(t) =
  \mathcal I_1(0)-2\kappa \int_0^t \mathcal I_2(s)\dd s.
\end{equation*}
The first point of Theorem~\ref{theo:temps-long} is then a direct
consequence of Lemma~\ref{lem:taubis}. 
\bigbreak

\noindent
{\bf (b)}
We split the proof of the second point into four steps.

\medskip\noindent
{\it Step 1.} We first obtain an equation satisfied by $R$ only. Since
$\partial_t(\tau^2 \partial_t R) = - \partial_t \DIV (R U)$ as well as
$\partial_t(\tau^2 \partial_t R) = \tau^2 \partial_t^2 R + 2\dot \tau
\tau \partial_t R$, we obtain from \eqref{eq:RU} that 
\[
\begin{aligned}
\tau^2 \partial_t^2 R + 2\dot \tau \tau \partial_t R
&= P'(0) \mathcal L R + \frac{1}{\tau^2} \nabla^2 : (R U \otimes U) 
+ \DIV \left( (P'(\tfrac{\theta R}{\tau^d}) - P'(0))  \cdot \nabla R   \right)\\
&\quad
+ \nabla^2: \left( \dfrac{\nu}{\tau^2} \sqrt{R} \mathbb S_N + \dfrac{\varepsilon^2}{2\tau^2} \mathbb S_{K} \right)
+\nu \frac{\dot \tau}{\tau} \Delta R ,
\end{aligned}
\]
where we denote by $\mathcal L$ the Fokker-Planck operator $\mathcal L
R:= \Delta_y R + 2 \DIV_y (y R)$.

\medskip\noindent
{\it Step 2.} 
Since $\tau^2\ll (\tau\dot \tau)^2$ as $t\to
\infty$, it is natural to introduce the new time variable
\begin{equation*}
  s(t) = P'(0) \int\frac{1}{\tau\dot \tau}= \frac{1}{2}\int
  \frac{\ddot \tau}{\dot \tau} = \frac{1}{2}\ln \dot \tau(t)\Eq t
  \infty \frac{1}{4}\ln \ln t,
\end{equation*}
where the last estimate stems from Lemma~\ref{lem:taubis}, and we define $\alpha : s \mapsto \alpha(s) = t$. We observe that, thanks to Lemma \ref{lem:taubis}, the following asymptotics estimates hold in terms of the $s$-variable:
\[
\tau \circ \alpha(s) \underset{s\to \infty}{\sim} 2 \sqrt{P'(0)} e^{2s} e^{e^{4s}}, \quad
\dot\tau \circ \alpha(s) \underset{s \to \infty}{\sim} 2 \sqrt{P'(0)} e^{2s} .
\]

Setting $\bar R(s,y)=R(t,y),$ $\bar U(s,y)=U(t,y)$ and $\bar{\mathbb T}_N(s,y) = \mathbb T_N(t,y),$ $\bar{\mathbb S}_K(s,y) = \mathbb S_K(t,y),$  a
straightforward computation shows that $\bar R$ satisfies 
\[
\partial_s \bar R - \frac{2P'(0)}{(\dot \tau \circ \alpha)^2}\partial_s \bar R + \frac{P'(0)}{(\dot \tau \circ \alpha)^2}\partial_s^2 \bar R =  \mathcal L \bar R + \mathcal N_{\alpha} [\bar R, \bar U, \bar{\mathbb T}_N,\bar{\mathbb S}_K]  ,
\]
where
\begin{equation}\label{eq:Nalpha}
\begin{aligned}
\mathcal N_{\alpha} [\bar R, \bar U, \bar{\mathbb T}_N,\bar{\mathbb S}_K] 
&:= \frac{1}{ ( \tau \circ \alpha)^2} \nabla^2 : (\bar R \bar U \otimes \bar U)\\
&
+ \DIV \left( \(P'\(\tfrac{\theta\bar R}{(\tau \circ \alpha)^d}\) - P'(0)\)
  \nabla \bar R   \right) \\ 
& + \nabla^2 : \left( \frac{\nu}{ (\tau \circ \alpha)^2} \sqrt{\bar{R}} \bar{\mathbb S}_N +  \dfrac{\varepsilon^2}{(\tau \circ \alpha)^2} \bar{\mathbb S}_K \right)
+\nu \frac{\dot \tau \circ \alpha}{\tau \circ \alpha} \Delta \bar R,
\end{aligned}
\end{equation}
and the same compatibility conditions between overlined quantities as in 
\eqref{eq_compnewton}--\eqref{eq_compkorteweg}.
We also remark that  
we have
\[
\DIV \left( \(P'\(\tfrac{\theta\bar R}{(\tau \circ \alpha)^d}\) -
  P'(0)\)   \nabla \bar R   \right) 
= \dfrac{(\tau\circ \alpha)^d}{\theta} \Delta \( P(\sigma) - \sigma P'(0)
\)\Big|_{\sigma = \frac{\theta\bar R}{(\tau \circ \alpha)^d} }. 
\]
In view of Lemma~\ref{lem:aprioriE},
$(\bar R, \bar U)$ verifies for
some constant $C_0 >0$
\begin{equation}\label{eq:tilde-R-Linftyt}
\sup_{s \ge 0} \int_{\R^d} \bar R (1+|y|^2 + |\ln \bar R|) \, \dd y \le C_0,
\end{equation} 
and we also have, by the assumption $\int_0^\infty \mathcal
D(t)\dd t<\infty$, 
\begin{equation}\label{eq:tilde-R-U-L2t}
\begin{aligned}
\int_0^\infty &\left(\frac{\dot \tau \circ \alpha}{\tau \circ \alpha}
\right)^2 \left( \left\|\sqrt{\bar R} \bar U  \right\|_{L^2(\R^d)}^2  
+ \eps^2  \left\| \nabla \sqrt{\bar R}  \right\|_{L^2(\R^d)}^2 \right) \dd s \\
&+\int_0^\infty (\dot \tau\circ \alpha)^2 (\tau\circ \alpha)^{d} 
\left( \int_{\R^d} \( P(\sigma) - \sigma P'(0) \)\Big|_{\sigma = \frac{\bar
      R}{(\tau \circ \alpha)^d} } \, \dd y \right)  \dd s  \\ 
&+ \nu \int_0^\infty \frac{\dot \tau \circ \alpha}{(\tau \circ
  \alpha)^3} \left\| \bar{\mathbb S}_N\right\|_{L^2(\R^d)}^2 \, \dd s  
\le C_1,
\end{aligned}
\end{equation}
for some constant $C_1>0$.

\medskip\noindent
{\it Step 3.} 
Let $s \in [0,1]$ and consider a sequence $s_n \to \infty$ when $n \to
\infty$. Define the sequences $\bar R_n(s,y) := \bar R(s+s_n,y)$,
$\bar U_n(s,y) := \bar U (s+s_n,y),$
$\bar{\mathbb T}_{N,n} := \bar{\mathbb T}_{N}(s+s_n,y),$
$\bar{\mathbb S}_{K,n} := \bar{\mathbb S}_{K}(s+s_n,y)$ and
$\alpha_n(s) := \alpha (s + 
s_n)$, in such a way that 
\begin{equation}\label{eq:tilde-Rn-Un}
\partial_s \bar R_n - \frac{2P'(0)}{(\dot \tau\circ
  \alpha_n)^2}\partial_s \bar R_n + \frac{P'(0)}{(\dot \tau\circ
  \alpha_n)^2}\partial_s^2 \bar R_n =  \mathcal L \bar R_n + \mathcal
N_{\alpha_n} [\bar R_n ,\bar U_n, \bar{\mathbb T}_{N,n} , \bar{\mathbb S}_{K,n}]. 
\end{equation}
Moreover, estimates \eqref{eq:tilde-R-Linftyt} and \eqref{eq:tilde-R-U-L2t} yield
\begin{equation} \label{eq:tilde-Rn-Linftyt}
\sup_{n \in \N}  \sup_{s \in [0,1]} \int_{\R^d} \bar R_n (1+|y|^2 + |\ln  \bar R_n|) \dd y 
 \le C,
\end{equation} 
and
\begin{equation}\label{eq:tilde-Rn-Un-L2t}
  \begin{aligned}
 & \lim_{n \to \infty} 
    \int_0^1 \left(\frac{\dot \tau\circ \alpha_n}{\tau\circ \alpha_n}
    \right)^2 \left( \left\| \sqrt{\bar R_n} \bar U_n  \right\|_{L^2(\R^d)}^2   
+ \eps^2  \left\|  \nabla \sqrt{\bar R_n}  \right\|_{L^2(\R^d)}^2 \right) \dd s =0, \\
&\lim_{n \to \infty} \int_0^1 (\dot \tau\circ \alpha_n)^2 (\tau\circ \alpha_n)^{d} 
\left( \int_{\R^d} \( P(\sigma) - \sigma P'(0) \)\Big|_{\sigma = \frac{\theta
      \bar
      R}{(\tau \circ \alpha_n)^d} } \, \dd y \right)  \dd s =0, \\ 
& \lim_{n \to \infty} \nu \int_0^1 \frac{\dot \tau \circ \alpha_n}{(\tau \circ
  \alpha_n)^3} \left\| \bar{\mathbb S}_{N,n} \right\|_{L^2(\R^d)}^2 \, \dd
s  = 0.
  \end{aligned}
\end{equation}

From \eqref{eq:tilde-Rn-Linftyt} and Dunford-Pettis theorem, we deduce
that there exists  
\[
R_\infty \in L^1((0,1) \times \R^d)
\]
such that (up to extracting a subsequence)
\[
\bar R_n \rightharpoonup R_\infty \text{ weakly in } L^1((0,1) \times \R^d) \text{ as } n \to \infty,
\]
with $R_{\infty}$ of finite (mean) relative entropy $\int_0^1
\int_{\R^d} |{R}_{\infty}\ln ({R}_{\infty}/\Gamma)| < \infty.$  

Therefore, passing to the limit $n \to \infty$ in Equation \eqref{eq:tilde-Rn-Un}, we obtain
\begin{equation}\label{eq:Rinfty}
\partial_s R_\infty = \mathcal L R_\infty  \text{ in }  \mathcal D'
((0,1) \times \R^d) .  
\end{equation}
In order to establish \eqref{eq:Rinfty}, the convergence of the second, third and fourth terms of
\eqref{eq:tilde-Rn-Un} are evident, hence we only give the details for
the convergence of the term $\mathcal N_{\alpha_n} [\bar R_n ,\bar U_n, \bar{\mathbb T}_{N,n}, \bar{\mathbb S}_{K,n}] $  in $\mathcal D' ((0,1) \times \R^d)$. 
For any $\phi \in \mathcal D ((0,1) \times \R^d)$ we have
\[
\begin{aligned}
&
\left| \left\langle \frac{1}{(\tau\circ\alpha_n)^2} \nabla^2 : (\bar R_n \bar U_n \otimes \bar U_n) , \phi \right\rangle \right| 
= \left|
\int_0^1 \!\! \int_{\R^d}
\frac{1}{(\tau\circ\alpha_n)^2}   (\bar R_n \bar U_n \otimes \bar U_n) : \nabla^2 \phi \, \dd y \, \dd s \right| \\
&
\lesssim  \left(\sup_{n \in \N} \sup_{s \in [0,1]} \| \bar R_n
  \|_{L^1(\R^d)} \right)  \left( \sup_{s \in [0,1]} \frac{1}{(\dot \tau
    \circ\alpha_n)^2} \right) 
 \left( \int_0^1 \left(\frac{\dot
      \tau\circ\alpha_n}{\tau\circ\alpha_n} \right)^2 \|  \sqrt{\bar
    R_n} \bar U_n  \|_{L^2(\R^d)}^2 \, \dd s \right)  ,
\end{aligned}
\]
from which we deduce, using \eqref{eq:tilde-Rn-Linftyt} and \eqref{eq:tilde-Rn-Un-L2t}, the convergence of the first term of $\mathcal N_{\alpha_n}[\bar R_n ,\bar U_n, \bar{\mathbb T}_{N,n}, \bar{\mathbb S}_{K,n}] $, that is
$$
\lim_{n \to \infty} \frac{1}{(\tau\circ\alpha_n)^2} \nabla^2 : (\bar R_n \bar U_n \otimes \bar U_n) = 0 
 \text{ in }  \mathcal D'((0,1) \times \R^d) .
$$
For the second term, we write
\[
\begin{aligned}
&
\left| \left\langle
\DIV \left( (P'(\tfrac{\theta\bar R_n}{(\tau \circ \alpha_n)^d}) - P'(0))
  \nabla \bar R_n   \right), \phi \right\rangle \right| \\ 
&\qquad 
= \left|
\int_0^1 \!\! \int_{\R^d}
 (\tau\circ \alpha_n)^d \( P(\sigma) - \sigma P'(0) \)\Big|_{\sigma =
   \frac{\theta\bar R_n}{(\tau \circ \alpha_n)^d} } \Delta \phi \, \dd y \,
 \dd s \right| \\ 
&\qquad
\lesssim  \left(\sup_{s \in [0,1]} \frac{1}{(\dot \tau \circ
    \alpha_n)^2} \right)\left( \int_0^1 \!\! \int_{\R^d} 
 (\dot \tau \circ \alpha_n)^2 (\tau\circ \alpha_n)^d \( P(\sigma) -
 \sigma P'(0) \)\Big|_{\sigma = \frac{\theta \bar R_n}{(\tau \circ
     \alpha_n)^d} } \, \dd y \, \dd s  \right),
\end{aligned}
\]
wich again converges to $0$ thanks to \eqref{eq:tilde-Rn-Linftyt} and \eqref{eq:tilde-Rn-Un-L2t}. Concerning the third term, we recall first the compatibility condition $\overline{\mathbb S}_{K,n}
=  \sqrt{\bar{R}_n}\nabla^2 \bar R_n -  \nabla \sqrt{\bar R_n} \otimes \nabla \sqrt{\bar R_n}$ from \eqref{eq_compkorteweg}, from which we obtain
\[
\begin{aligned}
& \eps^2 \left|
\left\langle
\frac{1}{(\tau\circ\alpha_n)^2} \nabla^2 : \bar{\mathbb{S}}_{K,n} ,  \phi \right\rangle \right| \\
&\qquad
\lesssim \eps^2 
\int_0^1 \!\! \int_{\R^d}
\frac{1}{(\tau\circ\alpha_n)^2}  \left( |\nabla \sqrt{\bar R_n}|^2 + |\bar R_n|   \right) (|\nabla^2 \phi | + |\nabla^3 \phi|) \, \dd y \, \dd s  \\
&\qquad
\lesssim \eps^2  \left(\sup_{s \in [0,1]} \frac{1}{(\dot \tau \circ\alpha_n)^2} \right)  \left( \int_0^1 \left(\frac{\dot \tau \circ\alpha_n}{\tau \circ\alpha_n} \right)^2 \|  \nabla \sqrt{\bar R_n} \|_{L^2(\R^d)}^2 \, \dd s \right) \\
&\qquad \quad
+ \eps^2  \left(\sup_{s \in [0,1]} \frac{1}{ (\tau\circ\alpha_n)^2} \right)  \sup_{n \in \N} \sup_{s \in [0,1]} \|  {\bar R_n}  \|_{L^1(\R^d)}  ,  
\end{aligned}
\]
which goes to $0$ by using \eqref{eq:tilde-Rn-Linftyt} and \eqref{eq:tilde-Rn-Un-L2t}. 
For the fourth term of $\mathcal N_{\alpha_n} [\bar R_n ,\bar U_n , \bar{\mathbb T}_{N,n}, \bar{\mathbb S}_{K,n}]$, we have
\[
\begin{aligned}
& \left| \nu \left\langle
 \frac{1}{(\tau\circ\alpha_n)^2} \nabla^2 : (\sqrt{\bar{R}_n} \bar{\mathbb S}_{N,n}) , \phi
\right\rangle \right| 
= \nu \left|
\int_0^1\int_{\R^d}
\frac{1}{(\tau\circ\alpha_n)^2}  \sqrt{\bar{R}_n} \bar{\mathbb S}_{N,n}: \nabla^2 \phi \, \dd y \, \dd s \right| \\
&\quad
\lesssim \nu \left|
\int_0^1\int_{\R^d}
\frac{1}{(\tau\circ\alpha_n)^2}  \sqrt{\bar R_n} \left|\bar{\mathbb S}_{N,n}\right| |\nabla^2 \phi|  \, \dd y \, \dd s \right| \\
&\quad
\lesssim \nu  \left( \int_0^1 \frac{1}{(\tau \circ\alpha_n) (\dot \tau
    \circ\alpha_n)} \left\|\sqrt{\bar R_n} \right\|_{L^2(\R^d)}^2 \, \dd s
\right)^{\frac12}   \left( \int_0^1 \frac{\dot \tau \circ\alpha_n}{(\tau
    \circ\alpha_n)^3}  \left\| \bar{\mathbb{S}}_{N,n}
  \right\|_{L^2(\R^d)}^2 \, \dd s \right)^{\frac12} \\ 
&\quad
\lesssim \nu  \left( \sup_{n\in\mathbb N}\sup_{s \in [0,1]} \frac{1}{(\dot\tau\circ\alpha_n)\,(\tau\circ\alpha_n)} \right) \left( \sup_{n \in \N}\sup_{s \in [0,1]} \| \bar R_n
  \|_{L^1(\R^d)}  \right) 
   \left( \int_0^1 \frac{\dot \tau
    \circ\alpha_n}{(\tau \circ\alpha_n)^3}  \| \bar{\mathbb{S}}_{N,n}  \|_{L^2(\R^d)}^2 \, \dd s \right)^{\frac12}  ,
\end{aligned}
\]
and that last expression converges to $0$. Finally, for the last term of $\mathcal N_{\alpha_n} [\bar R_n ,\bar U_n , \bar{\mathbb T}_{N,n}, \bar{\mathbb S}_{K,n}]$, we obtain
\[
\begin{aligned}
 \nu \left| \left\langle 
\frac{\dot\tau\circ\alpha_n}{\tau\circ\alpha_n} \Delta \bar R_n , \phi
\right\rangle \right| 
&= \nu \left| 
\int_0^1\int_{\R^d}
\frac{\dot\tau\circ\alpha_n}{\tau\circ\alpha_n} \bar R_n \Delta \phi \, \dd y \, \dd s \right| \\
& \lesssim \nu  \left( \sup_{s \in [0,1]} \frac{\dot\tau\circ\alpha_n}{\tau\circ\alpha_n} \right) \left( \sup_{n \in \N} \sup_{s \in [0,1]} \| \bar R_n \|_{L^1(\R^d)} \right)  ,
\end{aligned}
\]
which also goes to $0$.

\medskip\noindent
{\it Step 4.} 
We now follow the arguments of \cite{CaGa-p} in order to show that
$R_\infty = \Gamma$, which concludes the proof of point (b). Because $R_{\infty}$ has finite
entropy and, by a tightness argument, $\bar R_n$ cannot  
lose mass thanks to \eqref{eq:tilde-Rn-Linftyt}, \cite[Corollary
2.17]{AMTU01} entail that the solution to \eqref{eq:Rinfty} satisfies
\[
\|R_\infty(s)-\Gamma\|_{L^1(\R^d)}\Tend s \infty 0, 
\]
since $R_{\infty}$ and $\Gamma$ have the same mass in view of
\eqref{eq:uvFluid}. 
 On the other hand, in the $s$-variable we have 
$$
\partial_s \bar R + \frac{\dot \tau \circ \alpha}{P'(0) \tau \circ \alpha} \DIV (\bar R \bar U) = 0,
$$
and \eqref{eq:tilde-Rn-Un-L2t} implies
$$
\frac{\dot \tau \circ \alpha}{\tau \circ \alpha} \DIV (\bar R_n \bar U_n) \to 0 \text{ in } 
L^2((0,1) ; W^{-1,1}(\R^d)) \text{ as } n \to \infty.
$$
Therefore $\partial_s R_\infty = 0$, hence 
$R_\infty=\Gamma$. Since the limit is unique, no extraction of a
subsequence is needed, and the result does not depend on the sequence
$s_n\to \infty$, hence the result. 
\bigbreak

\noindent
{\bf (c)} The last point of Theorem~\ref{theo:temps-long} is proven by
rewriting the energy $E$, defined by \eqref{eq:energy-init}, in terms
of the new unknowns $(R,U)$ {\em via} \eqref{eq:uvFluid}:
\begin{align*}
  E(t) &= \frac{1}{2}\int \rho(t,x)|u(t,x)|^2 \dd x+\frac{\eps^2}{2} \int |\nabla
    \sqrt{\rho(t,x)}|^2 \dd x+\int F(\rho(t,x)) \dd x\\
&= \frac{\theta}{2\tau^2}\int R(t,y)|U(t,y)|^2\dd y +\frac{\theta\dot
  \tau}{2}\int R(t,y)|y|^2\dd y +\theta \frac{\dot \tau}{\tau}\int
  R(t,y)y\cdot U(t,y)\dd y\\
&\quad +\theta\frac{\eps^2}{2}\int \left|\nabla
  \sqrt{R(t,y)}\right|^2\dd y +\int
  F\(\frac{\theta}{\tau^d}R\(t,\frac{x}{\tau^d}\)\)\dd x.
\end{align*}
Recalling the identity \eqref{eq:FG},
\[ 
F(\rho) = P'(0)\rho \ln \rho + G(\rho),
\]
we can write
\begin{align*}
  E(t) &= \frac{\theta}{2\tau^2}\int R(t,y)|U(t,y)|^2\dd y +\frac{\theta\dot
  \tau}{2}\int R(t,y)|y|^2\dd y +\theta \frac{\dot \tau}{\tau}\int
  R(t,y)y\cdot U(t,y)\dd y\\
&\quad +\theta\frac{\eps^2}{2}\int \left|\nabla
  \sqrt{R(t,y)}\right|^2\dd y +\theta P'(0) \int R(t,y)\ln R(t,y)\dd
  y\\
&\quad 
  +\theta P'(0)\ln\frac{\theta}{\tau^d}\int R(t,y)\dd y 
+ \tau^d\int G\(\frac{\theta}{\tau^d}R(t,y)\)\dd y.
\end{align*}
In view of Lemma~\ref{lem:aprioriE}, the first, fourth, fifth and last
terms are bounded functions of time. Invoking in addition
Cauchy-Schwarz inequality, the third term is $\O(\dot
\tau)=\O(\sqrt{\ln t})$ from Lemma~\ref{lem:taubis}. Therefore, since
we have assumed $E(t)=o(\ln t)$, we infer
\[
\frac{\dot
  \tau}{2}\int R(t,y)|y|^2\dd y - P'(0)\ln\tau^d\int R(t,y)\dd y
=o(\ln t) \quad \text{as }t\to \infty. 
\]
Lemma~\ref{lem:taubis} yields
\[
\dot \tau = 2\sqrt{P'(0)\ln t}\(1+o(1)\),\quad \ln \tau =\(1+o(1)\)\ln t,
\]
therefore
\[
2\int R(t,y)|y|^2\dd y-d\int R(t,y)\dd y \Tend t \infty 0. 
\]
Recalling that the mass is conserved, and an easy property of the
Gaussian $\Gamma$,
\[
\int R(t,y)\dd y = \int R(0,y)\dd y = \int \Gamma(y)\dd y,\quad \int
|y|^2\Gamma(y)\dd y = \frac{d}{2} \int \Gamma(y)\dd y,
\]
the proof of the last point of Theorem~\ref{theo:temps-long}
follows. 
\end{proof}


\section{On the notion of weak solutions}
\label{sec:compactness}
In this part, we investigate the notion of weak solutions that we consider for the long-time analysis.
At first, we provide {\em a priori} estimates satisfied by classical
solutions to \eqref{eq:NSK} such that 
the density decays sufficiently fast at infinity. 
These estimates justify the regularity statements of
Definition~\ref{def_weaksol}. Second, we prove sequential compactness
of bounded sets of weak solutions.  
Classically, this compactness result is a cornerstone for obtaining
existence of weak solutions.

\subsection{A priori estimates}
\label{sec:apriori}

In this section, we present some a priori estimates that motivate our definition of weak solution.
As we noticed before, the structure of \eqref{eq:RU} suits better {\em
  a priori} estimates than \eqref{eq:NSK}. So, from now
on, we consider solutions $(R,U)$ of the system written in this form.

\subsubsection{Energy estimate} 
First, we have an extension of Lemma~\ref{lem:aprioriE}. 

\begin{proposition}\label{prop:apriori-energy}
Consider $\eps, \nu \ge 0$. Assume that the initial data satisfies
$$
 R_0(1+|y|^2 + \ln R_0) \in L^1, \quad 
 \sqrt{R_0} U_0  \in L^2  , \quad
\eps \nabla \sqrt{R_0} \in L^2.
$$
Let $(R,U)$ be a smooth solution to \eqref{eq:RU} associated to the initial data $(R_0,U_0)$, then 
\begin{equation}\label{eq:apriori-energy}
\begin{aligned}
 R(1+|y|^2 + \ln R) &\in L^\infty( \R^+ ; L^1(\R^d)), \\
 \frac{1}{\tau} \sqrt{R} U &\in  L^\infty( \R^+ ; L^2 (\R^d)), \\ 
 \frac{\eps}{\tau} \nabla \sqrt{R} &\in L^\infty( \R^+ ; L^2 (\R^d)), \\
 \sqrt{\frac{\dot \tau}{\tau^3}} \sqrt R U &\in L^2( \R^+ ; L^2 (\R^d)), \\
 \eps \sqrt{\frac{\dot \tau}{\tau^3}} \nabla \sqrt{R} &\in L^2( \R^+ ; L^2 (\R^d)), \\
 \frac{\sqrt \nu}{\tau^2}  \sqrt{R} DU &\in L^2( \R^+ ; L^2 (\R^d)).
\end{aligned}
\end{equation}

\end{proposition}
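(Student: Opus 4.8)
The proof rests on the pseudo-energy identity \eqref{eq:evol-pseudo}, which is rigorous for smooth solutions of \eqref{eq:RU} whose density decays fast enough at infinity. First I would record conservation of mass: integrating the continuity equation $\d_t R + \tau^{-2}\DIV(RU)=0$ gives $\int_{\R^d}R(t,y)\,\dd y=\int_{\R^d}R_0(y)\,\dd y$ for all $t\ge 0$. Next I would carry out the standard but somewhat lengthy computation of $\dot{\mathcal E}$ for $\mathcal E$ as in \eqref{eq:pseudo-energy}: one multiplies the momentum equation of \eqref{eq:RU} by $U$, uses \eqref{eq:FG} to recognize the time derivative of the $\int R\ln R$ and $\int G(\theta R/\tau^d)$ contributions, and invokes the compatibility relations \eqref{eq_compnewton}--\eqref{eq_compkorteweg} together with integrations by parts for the capillary and viscous terms; this produces exactly \eqref{eq:evol-pseudo} with $\mathcal D$ given by \eqref{eq:dissip-pseudo}. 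Since $P$ is convex (Assumption~\ref{hyp:P-temps-long}), we have $G\ge 0$ and $P(\sigma)\ge P'(0)\sigma$, hence $\mathcal D\ge 0$.

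Second, I would dispose of the sign-indefinite term $-\nu\frac{\dot\tau}{\tau^3}\int R\,\DIV U$ in \eqref{eq:evol-pseudo} by absorbing it into $\mathcal D$, as suggested in Remark~\ref{rem:hyp-naturelles}. Writing $R\,\DIV U=\sqrt R\,\mathrm{Tr}(\mathbb S_N)$ via \eqref{eq_compnewton} (valid for a smooth solution, where $\mathbb S_N=\sqrt R\,DU$), the Cauchy--Schwarz inequality and mass conservation give
\[
\nu\frac{\dot\tau}{\tau^3}\left|\int_{\R^d} R\,\DIV U\,\dd y\right|
\le \sqrt d\,\nu\frac{\dot\tau}{\tau^3}\,\|R_0\|_{L^1}^{1/2}\,\|\mathbb S_N\|_{L^2}
\le \frac{\nu}{2\tau^4}\|\mathbb S_N\|_{L^2}^2 + \frac{d}{2}\,\|R_0\|_{L^1}\,\nu\,\frac{\dot\tau^2}{\tau^2},
\]
and the first term on the right is absorbed by the $\frac{\nu}{\tau^4}\int|\mathbb S_N|^2$ piece of $\mathcal D$. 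Thus $\dot{\mathcal E}(t)\le -\widetilde{\mathcal D}(t)+C\,\nu\,(\dot\tau/\tau)^2$, where $C=\frac d2\|R_0\|_{L^1}$ and $\widetilde{\mathcal D}\ge 0$ still controls $\frac{\dot\tau}{\tau^3}\int R|U|^2$, $\eps^2\frac{\dot\tau}{\tau^3}\int|\nabla\sqrt R|^2$ and $\frac{\nu}{2\tau^4}\int|\mathbb S_N|^2$.

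Third, I would integrate this differential inequality. Since $\tau$ solves \eqref{eq:tau}, Lemma~\ref{lem:taubis} gives $\dot\tau/\tau\sim 1/t$ as $t\to\infty$ (and $\dot\tau(0)=0$, so $\dot\tau/\tau$ is bounded near $t=0$), whence $\int_0^\infty(\dot\tau/\tau)^2\,\dd t<\infty$. The initial pseudo-energy $\mathcal E(0)$ is finite under the stated hypotheses on $(R_0,U_0)$: with $\tau(0)=1$, the term $\int G(\theta R_0)$ is, up to the scaling \eqref{eq:uvFluid} and the identity \eqref{eq:FG}, the nonlinear energy $\int F(\rho_0)-P'(0)\int\rho_0\ln\rho_0$, finite for the admissible pressure laws. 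Therefore
\[
\mathcal E(t)+\int_0^t\widetilde{\mathcal D}(s)\,\dd s\le \mathcal E(0)+C\nu\int_0^\infty\Big(\frac{\dot\tau}{\tau}\Big)^2\dd s=:\Lambda<\infty,\qquad \forall\,t\ge 0.
\]
Now I would translate these bounds into \eqref{eq:apriori-energy}. The uniform bound $\sup_{t\ge 0}\mathcal E(t)\le\Lambda$ together with conservation of mass places us in the hypotheses of Lemma~\ref{lem:aprioriE}, whose conclusion gives at once $R(1+|y|^2+|\ln R|)\in L^\infty(\R^+;L^1)$, $\frac1\tau\sqrt R\,U\in L^\infty(\R^+;L^2)$ and $\frac\eps\tau\nabla\sqrt R\in L^\infty(\R^+;L^2)$, i.e.\ the first three lines of \eqref{eq:apriori-energy}. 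The bound $\int_0^\infty\widetilde{\mathcal D}(s)\,\dd s<\infty$, combined with the lower bounds on $\widetilde{\mathcal D}$ recorded above and the relation $\mathbb S_N=\sqrt R\,DU$, yields $\sqrt{\dot\tau/\tau^3}\,\sqrt R\,U\in L^2(\R^+;L^2)$, $\eps\sqrt{\dot\tau/\tau^3}\,\nabla\sqrt R\in L^2(\R^+;L^2)$ and $\sqrt\nu\,\tau^{-2}\sqrt R\,DU\in L^2(\R^+;L^2)$, i.e.\ the last three lines of \eqref{eq:apriori-energy}.

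The main obstacle is the careful derivation of \eqref{eq:evol-pseudo}: tracking all the terms generated by the time-dependent scaling, the Korteweg term and the viscous term, and checking that after the Young-type absorption of the $\nu\,\DIV U$ contribution the residual dissipation $\widetilde{\mathcal D}$ is still nonnegative. The remaining steps — mass conservation, the integration in time, and the appeals to Lemma~\ref{lem:taubis} and Lemma~\ref{lem:aprioriE} — are routine.
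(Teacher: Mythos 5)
Your argument is correct and follows essentially the same route as the paper's own proof: absorb the sign-indefinite term $-\nu\frac{\dot\tau}{\tau^3}\int R\,\DIV U$ into the $\frac{\nu}{\tau^4}\int|\mathbb S_N|^2$ piece of the dissipation via Young's inequality, use mass conservation and the integrability of $(\dot\tau/\tau)^2$ over $\R_+$ to bound $\mathcal E$ from above, then invoke Lemma~\ref{lem:aprioriE} for the $L^\infty_t$ bounds and the integrated dissipation for the $L^2_t$ bounds. The only presentational difference is that you apply Cauchy--Schwarz on the spatial integral before Young while the paper applies Young pointwise, but this yields the same estimate.
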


\begin{proof}
In the same vein as in Remark~\ref{rem:hyp-naturelles}, we first remark that
$$
\frac{\nu \dot \tau}{\tau^3} \int R |\DIV U| 
\le C \frac{\nu \dot \tau^2}{\tau^2} \int R 
+ \frac12 \frac{\nu}{\tau^4} \int R |DU|^2
\le
C \frac{\nu \dot \tau^2}{\tau^2} \int R 
+ \frac12 \mathcal D.
$$
Therefore, using \eqref{eq:evol-pseudo}, the conservation of mass and
the fact that $\int \frac{\dot \tau^2}{\tau^2} < \infty$, we obtain
that the pseudo energy $\mathcal E$ is uniformly bounded from
above. Lemma~\ref{lem:aprioriE}  implies that  $\mathcal E$ is
uniformly bounded in time, and
its nonnegative dissipation $\mathcal D$ is integrable in time, which
gives the desired a priori bounds on $(R,U)$. 
\end{proof}

\subsubsection{Pseudo entropy and effective velocity}
Contrary to the Newtonian case, the previous a priori estimate is not
sufficient to run  
a classical compactness argument for proving existence of
solutions. So, we provide 
here a further estimate satisfied by a ``pseudo entropy'' of an
effective velocity. This construction 
is inspired of \cite{AS-comp}. 
Given $\lambda \in \R,$ 
define the effective velocity 
$$
W_\lambda = U + \lambda \nabla \ln R.
$$
Then the pair $(R,W_\lambda)$ satisfies
\begin{equation}\label{eq:RWlambda}
\left\{
\begin{aligned}
& \partial_t R + \frac{1}{\tau^2} \DIV (R W_\lambda) = \frac{\lambda}{\tau^2} \Delta R , \\
& \partial_t (R W_\lambda) + \frac{1}{\tau^2} \DIV (R W_\lambda \otimes W_\lambda) 
+ 2 P'(0) y R + P'\(\tfrac{\theta R}{\tau^d}\)  \nabla R \\
&\qquad
= \frac{\lambda_1}{2\tau^2} R \nabla \left( \frac{\Delta \sqrt R}{\sqrt R} \right)
+\frac{\lambda_2}{\tau^2} \DIV(R D W_\lambda) 
+\frac{\lambda}{\tau^2} \Delta(R W_\lambda) 
+ \frac{\nu \dot \tau}{\tau}\nabla R ,
\end{aligned}
\right.
\end{equation}
where $\lambda_1 := 4\lambda^2 - 4\nu \lambda + \eps^2$ and $\lambda_2 := \nu - 2 \lambda$.

\begin{remark} \label{rem:RWlambda-eps}
If $0 \le \eps < \nu$, we define $\lambda := \frac{\nu - \sqrt{\nu^2 - \eps^2}}{2} \ge 0$ so that $\lambda_1 = 0$ and $\lambda_2 = \sqrt{\nu^2 - \eps^2} >0$. Then $(R, W_{\lambda})$ satisfies
\begin{equation}\label{eq:RWlambda-eps}
\left\{
\begin{aligned}
& \partial_t R + \frac{1}{\tau^2} \DIV (R W_{\lambda}) = \frac{\lambda}{\tau^2} \Delta R , \\
& \partial_t (R W_{\lambda}) + \frac{1}{\tau^2} \DIV (R W_{\lambda} \otimes W_{\lambda}) 
+ 2 P'(0) y R + P' \left( \tfrac{\theta R}{\tau^d} \right) \nabla R \\
&\qquad
= \frac{\lambda_2}{\tau^2} \DIV(R D W_{\lambda}) 
+\frac{\lambda}{\tau^2} \Delta(R W_{\lambda}) 
+ \frac{\nu \dot \tau}{\tau}\nabla R .
\end{aligned}
\right.
\end{equation}
We observe that when $\eps = 0$, then $W_{\lambda} = U$, and
\eqref{eq:RWlambda-eps} is just the original equation \eqref{eq:RU}
with $\eps = 0$. 
\end{remark}


We define the pseudo $\lambda$-entropy of $(R,U)$ by
\begin{equation*}
\mathcal E_\lambda := 
\frac{1}{2\tau^2}\int R|W_\lambda|^2
+\frac{\lambda_1}{2\tau^2} \int|\nabla \sqrt R|^2 
+ P'(0)\int (R|y|^2 +  R \ln R) 
+ \frac{\tau^d}{\theta}\int G\(\frac{\theta R}{\tau^d}\) ,
\end{equation*}
and its associated  dissipation
\begin{equation*}
\begin{aligned}
\mathcal D_\lambda  
& := \frac{\dot \tau}{\tau^3}\int  \left\{ R |W_\lambda|^2 + \lambda_1 |\nabla \sqrt R|^2 \right\}
+ d\frac{\dot \tau}{\tau} \tau^d \(\int \left[ P\(\sigma\) -\sigma P'(0) \right] |_{\sigma= \frac{\theta R}{\tau^d}} \) \\
&\quad 
+  \frac{\lambda_2}{\tau^4} \int R |D W_\lambda|^2
+  \frac{\lambda}{\tau^4} \int R |\nabla W_\lambda|^2
+  \frac{4\lambda P'(0)}{\tau^2} \int  |\nabla \sqrt R|^2 \\
&\quad 
+  \frac{4\lambda}{\tau^2} \int \frac{\theta R}{\tau^d} G'' \( \tfrac{\theta R}{\tau^d}   \) |\nabla \sqrt R|^2
+ \frac{\lambda \lambda_1}{4 \tau^4} \int R |\nabla^2 \ln R |^2.
\end{aligned} 
\end{equation*}
 This is nothing but the pseudo energy and dissipation associated to 
 \eqref{eq:RWlambda}.

By reproducing computations of the a priori estimate to this system, we obtain:

\begin{lemma}\label{lem:pseudo-lambda-entropy}
Let $(R,U)$ be a smooth solution to \eqref{eq:RU} and consider the effective velocity $W_\lambda := U + \lambda \nabla \ln R$ with $\lambda \in \mathbb R$. Then the pseudo $\lambda$-entropy $\mathcal E_{\lambda}$ satisfies
$$
\frac{\dd}{\dd t} \mathcal E_{\lambda} + \mathcal D_{\lambda} 
= \frac{2  \lambda d P'(0)}{\tau^2} \int R
  - \nu \frac{\dot \tau}{\tau^3} \int R \DIV W_\lambda.
$$
\end{lemma}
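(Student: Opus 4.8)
The plan is to establish the identity by directly computing $\frac{\dd}{\dd t}\mathcal E_\lambda$ using the system \eqref{eq:RWlambda} satisfied by $(R,W_\lambda)$, which is already available. Since $\mathcal E_\lambda$ is by construction the pseudo-energy of \eqref{eq:RWlambda} — that is, it has exactly the form of $\mathcal E$ in \eqref{eq:pseudo-energy} but with $U$ replaced by $W_\lambda$, $\eps^2$ replaced by $\lambda_1$, and an extra quantum-type parameter — the computation parallels the one behind \eqref{eq:evol-pseudo}. First I would differentiate each of the four terms of $\mathcal E_\lambda$ in time: the kinetic term $\frac{1}{2\tau^2}\int R|W_\lambda|^2$, the capillary-type term $\frac{\lambda_1}{2\tau^2}\int|\nabla\sqrt R|^2$, the Fokker–Planck term $P'(0)\int(R|y|^2 + R\ln R)$, and the excess-pressure term $\frac{\tau^d}{\theta}\int G(\theta R/\tau^d)$. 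Each produces (i) a contribution from the explicit $\tau(t)$-dependence, using $\ddot\tau = 2\kappa/\tau$ with $\kappa = P'(0)$, and (ii) a contribution from $\partial_t R$ and $\partial_t(RW_\lambda)$ governed by \eqref{eq:RWlambda}.

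The key steps, in order: (1) Use the continuity equation $\partial_t R + \tau^{-2}\DIV(RW_\lambda) = \tau^{-2}\lambda\Delta R$ to rewrite $\frac{\dd}{\dd t}\int R\ln R$, $\frac{\dd}{\dd t}\int R|y|^2$ and $\frac{\dd}{\dd t}\int G$; the $\lambda\Delta R$ term is what generates the Fisher-information dissipation terms $\frac{4\lambda P'(0)}{\tau^2}\int|\nabla\sqrt R|^2$ and $\frac{4\lambda}{\tau^2}\int\frac{\theta R}{\tau^d}G''(\theta R/\tau^d)|\nabla\sqrt R|^2$, after an integration by parts and the identity $\Delta R/R \cdot \text{(stuff)}$ combined with $|\nabla\sqrt R|^2 = \frac14|\nabla R|^2/R$. (2) For the kinetic term, expand $\partial_t(\frac{1}{2\tau^2}\int R|W_\lambda|^2) = -\frac{\dot\tau}{\tau^3}\int R|W_\lambda|^2 + \frac{1}{2\tau^2}\frac{\dd}{\dd t}\int R|W_\lambda|^2$, then pair $\partial_t(RW_\lambda)$ against $W_\lambda$ and $\partial_t R$ against $\frac12|W_\lambda|^2$ and use \eqref{eq:RWlambda}; the pressure terms $2P'(0)yR$ and $P'(\theta R/\tau^d)\nabla R$ integrate against $W_\lambda$ to cancel (via integration by parts and the continuity equation) against the time derivatives of the Fokker–Planck and $G$ terms, up to the dissipative remainders; the diffusion terms $\tau^{-2}\DIV(RDW_\lambda)$, $\tau^{-2}\Delta(RW_\lambda)$ give $-\frac{\lambda_2}{\tau^4}\int R|DW_\lambda|^2$ and $-\frac{\lambda}{\tau^4}\int R|\nabla W_\lambda|^2$ after integration by parts, and the Bohm term $\frac{\lambda_1}{2\tau^2}R\nabla(\Delta\sqrt R/\sqrt R)$ pairs with the time derivative of $\frac{\lambda_1}{2\tau^2}\int|\nabla\sqrt R|^2$ leaving $-\frac{\lambda\lambda_1}{4\tau^4}\int R|\nabla^2\ln R|^2$ (the quantum dissipation identity; cf. the manipulations in \cite{AS-comp}) plus the $-\frac{\dot\tau}{\tau^3}\lambda_1\int|\nabla\sqrt R|^2$ piece. (3) The term $\frac{\nu\dot\tau}{\tau}\nabla R$ on the right of \eqref{eq:RWlambda} paired with $W_\lambda$ gives, after integration by parts, $-\frac{\nu\dot\tau}{\tau^3}\int R\DIV W_\lambda$, which is exactly the last term on the right-hand side of the claimed identity. (4) Collect the explicit-$\tau$ contributions: these produce the $d\frac{\dot\tau}{\tau}\tau^d\int[P(\sigma)-\sigma P'(0)]$ term (from differentiating $\frac{\tau^d}{\theta}\int G(\theta R/\tau^d)$ and using $G$'s definition together with integration by parts, exactly as in the derivation of \eqref{eq:dissip-pseudo}) and, crucially, the anomalous source $\frac{2\lambda dP'(0)}{\tau^2}\int R$ — this arises because $W_\lambda = U + \lambda\nabla\ln R$ carries the extra gradient, so that $\int RW_\lambda\cdot\nabla(|y|^2)$-type pairings and the $\lambda\Delta R$ correction in the continuity equation do not fully cancel; tracking its coefficient carefully (it comes from $2P'(0)\lambda\int\Delta R \cdot \tfrac12|y|^2 \cdot$(sign) and equals $2\lambda d P'(0)\tau^{-2}\int R$ after two integrations by parts and $\int R$ conserved) is the bookkeeping heart of the lemma.

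The main obstacle I expect is precisely this last accounting: verifying that all the pressure/Fokker–Planck cross-terms cancel except for the single explicit source $\frac{2\lambda dP'(0)}{\tau^2}\int R$, and that the quantum/viscous terms reassemble into the precise Fisher-information and Hessian dissipation quantities appearing in $\mathcal D_\lambda$ with the stated coefficients $\lambda_1 = 4\lambda^2 - 4\nu\lambda + \eps^2$ and $\lambda_2 = \nu - 2\lambda$. In particular the Bohm-term manipulation — showing $\int \lambda_1 R\nabla(\Delta\sqrt R/\sqrt R)\cdot W_\lambda$ combined with $\partial_t\!\int\frac{\lambda_1}{2\tau^2}|\nabla\sqrt R|^2$ yields $-\frac{\lambda\lambda_1}{4\tau^4}\int R|\nabla^2\ln R|^2$ modulo exact time-derivative and explicitly dissipative pieces — relies on the standard but delicate identity relating $\nabla(\Delta\sqrt R/\sqrt R)$, $\DIV(R\nabla^2\ln R)$ and $\nabla\Delta\sqrt R$; I would invoke it as in \cite{AS-comp} rather than rederive it. Since \eqref{eq:RWlambda} and the definitions of $\mathcal E_\lambda$, $\mathcal D_\lambda$ are already in hand, once these cancellations are organized the identity follows by simply summing the four contributions, and I would present the computation as ``by reproducing the a priori energy computation applied to \eqref{eq:RWlambda}, tracking the extra terms generated by the $\lambda\nabla\ln R$ shift.''
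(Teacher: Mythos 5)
Your proposal is correct and takes essentially the same approach as the paper. The paper in fact gives no written proof of Lemma~\ref{lem:pseudo-lambda-entropy} beyond the preceding sentence ``By reproducing computations of the a priori estimate to this system, we obtain'' — i.e., the authors are implicitly asking the reader to redo the derivation of \eqref{eq:evol-pseudo}--\eqref{eq:dissip-pseudo} for the system \eqref{eq:RWlambda}, which is exactly what you do. Your tracing of the anomalous source term to the $\lambda\Delta R$ correction in the continuity equation acting on $P'(0)\int R|y|^2$ (so that $\lambda P'(0)\tau^{-2}\int \Delta R\,|y|^2 = \lambda P'(0)\tau^{-2}\int R\,\Delta|y|^2 = 2\lambda dP'(0)\tau^{-2}\int R$) is the right bookkeeping, and the remaining cancellations and the appearance of the Fisher/Hessian dissipation terms with the coefficients $\lambda_1,\lambda_2$ are consistent with $\mathcal D_\lambda$ as defined.
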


\begin{remark}
If we set $\lambda = \nu,$ we note that $\lambda_1 = \varepsilon^2$ and $\lambda_2=-\nu.$ In this case, two terms in the dissipation combine to yield:
\[
\begin{aligned}
 \frac{\lambda_2}{\tau^4} \int R |D W_\lambda|^2 +
 \frac{\lambda}{\tau^4} \int R |\nabla  W_\lambda|^2
& =  \dfrac{\nu}{\tau^4} \int_{R} |A W_{\lambda}|^2 
  = \dfrac{\nu}{\tau^4} \int_{R} |A U|^2.
\end{aligned}
\]
Thus, we recover the BD-entropy estimate associated with our system
(see \cite{BD04,Bre-De-CKL-03} for the introduction of this method, and
\cite{VasseurYu} in the case with a Bohm potential).  
We apply only 
this particular case to obtain the regularity statement
below. Nevertheless, we apply  the choice of $\lambda$ from
Remark~\ref{rem:RWlambda-eps} in the next subsection (to prove a Mellet-Vasseur
estimate). This choice enables to delete the Korteweg term, this is
why we
provided a statement with a general $\lambda$ in the previous lemma. 
 
\end{remark}

Combining the latter entropy estimate with energy estimate yields
controls on $(R,U)$, 
which enable to consider various cases for the parameters
$\varepsilon$ and $\nu.$ 
This ensures the following regularity properties of a classical solution:
\begin{proposition}\label{prop:apriori-BDentropy}
Consider $\eps, \nu \ge 0$. Assume that the initial data satisfies 
$$
 R_0(1+|y|^2 + \ln R_0) \in L^1(\R^d), \quad
 \sqrt{R_0} U_0 \in L^2(\R^d) , \quad
(\eps+\nu) \nabla \sqrt{R_0} \in L^2(\R^d). 
$$
Let $(R,U)$ be a smooth solution to \eqref{eq:RU} associated to the initial data $(R_0,U_0)$, then
\begin{equation}\label{eq:apriori-BDentropy}
\begin{aligned}
 R(1+|y|^2 + \ln R) &\in L^\infty( \R^+ ; L^1(\R^d)), \\
 \frac{1}{\tau} \sqrt{R} U &\in  L^\infty( \R^+ ; L^2 (\R^d)), \\ 
 \frac{(\eps+\nu)}{\tau} \nabla \sqrt{R} &\in L^\infty( \R^+ ; L^2 (\R^d)), \\
 \sqrt{\frac{\dot \tau}{\tau^3}} \sqrt R U &\in L^2(\R^+ ; L^2 (\R^d)), \\
 \eps \sqrt{\frac{\dot \tau}{\tau^3}} \nabla \sqrt{R} &\in L^2( \R^+ ; L^2 (\R^d)), \\
 \frac{\sqrt \nu}{\tau} \nabla \sqrt{R} &\in L^2( \R^+ ; L^2 (\R^d)), \\
 \frac{\sqrt \nu}{\tau^2}  \sqrt{R} \nabla U &\in L^2( \R^+ ; L^2 (\R^d)), \\
 \frac{ \nu}{\tau^2}  \frac{\theta R}{\tau^d} G'' \(  \frac{\theta
  R}{\tau^d} \) |\nabla \sqrt R|^2 &\in L^1(\R^+;L^1(\R^d)),\\
\frac{\eps \sqrt \nu }{\tau^2} \sqrt R \nabla^2 \log R &\in L^2( \R^+
 ; L^2 (\R^d)),
\end{aligned}
\end{equation}
and we observe that last estimate implies (see \cite{Jungel,VasseurYu})
$$
\frac{\eps \sqrt \nu }{\tau^2} \,  \nabla^2 \sqrt R  \in L^2( \R^+ ; L^2 (\R^d)), 
\quad 
\frac{\eps^{1/2} \nu^{1/4} }{\tau} \, \nabla R^{\frac14} \in L^4( \R^+ ; L^4 (\R^d)).
$$
\end{proposition}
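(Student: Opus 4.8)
The plan is to combine the two a priori estimates already available — the energy estimate of Proposition~\ref{prop:apriori-energy} and the pseudo $\lambda$-entropy estimate of Lemma~\ref{lem:pseudo-lambda-entropy} — specialized to the choice $\lambda=\nu$, for which $\lambda_1=\eps^2$ and $\lambda_2=-\nu$. With this choice, as the remark following Lemma~\ref{lem:pseudo-lambda-entropy} records, the two viscous terms in $\mathcal D_\nu$ recombine into $\frac{\nu}{\tau^4}\int R|AU|^2$ (antisymmetric part), which together with the symmetric part $\frac{\sqrt\nu}{\tau^2}\sqrt R DU$ already controlled by the energy estimate yields control of the full gradient $\frac{\sqrt\nu}{\tau^2}\sqrt R\nabla U$ in $L^2(\R^+;L^2)$. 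So first I would write down the identity of Lemma~\ref{lem:pseudo-lambda-entropy} with $\lambda=\nu$, estimate the right-hand side, and integrate in time.

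The main work is in absorbing the two source terms on the right-hand side of Lemma~\ref{lem:pseudo-lambda-entropy}. For the term $\frac{2\nu d P'(0)}{\tau^2}\int R$, using conservation of mass reduces it to a constant multiple of $\tau^{-2}$, and since $\tau(t)\sim 2t\sqrt{\kappa\ln t}$ by Lemma~\ref{lem:taubis}, this is integrable on $\R^+$. For the term $-\nu\frac{\dot\tau}{\tau^3}\int R\DIV W_\nu$, I would argue exactly as in Remark~\ref{rem:hyp-naturelles} and in the proof of Proposition~\ref{prop:apriori-energy}: bound $|\DIV W_\nu|\le \sqrt d\,|\nabla W_\nu|$ and use Cauchy--Schwarz to get $\frac{\nu\dot\tau}{\tau^3}\int R|\nabla W_\nu|\le C\frac{\nu\dot\tau^2}{\tau^2}\int R+\tfrac12\cdot\frac{\nu}{\tau^4}\int R|\nabla W_\nu|^2$; the first piece is integrable because $\int_0^\infty\dot\tau^2/\tau^2<\infty$ (again Lemma~\ref{lem:taubis}), and the second is half the relevant dissipation term and can be absorbed into the left-hand side. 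A subtlety: the dissipation $\mathcal D_\nu$ also contains the (sign-indefinite at first glance) terms $\frac{4\nu P'(0)}{\tau^2}\int|\nabla\sqrt R|^2$, $\frac{4\nu}{\tau^2}\int\frac{\theta R}{\tau^d}G''(\cdots)|\nabla\sqrt R|^2$, and $\frac{\nu\eps^2}{4\tau^4}\int R|\nabla^2\ln R|^2$ — but since $\lambda=\nu\ge 0$, $G''\ge 0$ by convexity of $P$, and $P'(0)=\kappa>0$, all of these are nonnegative and simply add to the bound; they give exactly the last four lines of \eqref{eq:apriori-BDentropy} once one also invokes the energy estimate to handle the energy-type terms in $\mathcal E_\nu$ (the $\int R|y|^2$, $\int R\ln R$, $\int G$ terms are treated precisely as in Lemma~\ref{lem:aprioriE}, whose proof controls the negative part of $\int R\ln R$ by interpolation against $\||y|^2R\|_{L^1}$).

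After integrating, I obtain uniform-in-time bounds on $\mathcal E_\nu(t)$ and integrability of $\mathcal D_\nu$; combining with Proposition~\ref{prop:apriori-energy} (whose conclusions already furnish the first, second, fourth and fifth lines of \eqref{eq:apriori-BDentropy}, plus $\frac{\sqrt\nu}{\tau^2}\sqrt R DU\in L^2L^2$), one reads off the remaining lines: $\frac{(\eps+\nu)}{\tau}\nabla\sqrt R\in L^\infty L^2$ from the $\frac{\lambda_1}{2\tau^2}\int|\nabla\sqrt R|^2$ term of $\mathcal E_\nu$ together with the energy estimate; $\frac{\sqrt\nu}{\tau}\nabla\sqrt R\in L^2L^2$ from the $\frac{4\nu P'(0)}{\tau^2}\int|\nabla\sqrt R|^2$ dissipation term; $\frac{\sqrt\nu}{\tau^2}\sqrt R\nabla U\in L^2L^2$ from recombining symmetric and antisymmetric parts as above; the $G''$-weighted term from its dissipation contribution; and $\frac{\eps\sqrt\nu}{\tau^2}\sqrt R\nabla^2\log R\in L^2L^2$ from the $\frac{\nu\eps^2}{4\tau^4}\int R|\nabla^2\ln R|^2$ term. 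The final two observations, $\frac{\eps\sqrt\nu}{\tau^2}\nabla^2\sqrt R\in L^2L^2$ and $\frac{\eps^{1/2}\nu^{1/4}}{\tau}\nabla R^{1/4}\in L^4L^4$, follow from the algebraic identities relating $\nabla^2\sqrt R$, $\nabla R^{1/4}$ and $\sqrt R\nabla^2\log R$ together with the already-established bounds, exactly as in \cite{Jungel,VasseurYu}. The main obstacle is bookkeeping rather than conceptual: making sure every term of $\mathcal D_\nu$ is correctly signed under $\lambda=\nu\ge 0$ and that the one genuinely bad term ($\DIV W_\nu$ source) is split so that the bad half is absorbed and the good half is controlled by the $\dot\tau^2/\tau^2$ integrability from Lemma~\ref{lem:taubis}.
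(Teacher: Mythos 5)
Your overall strategy matches the paper's: combine \eqref{eq:evol-pseudo} with Lemma~\ref{lem:pseudo-lambda-entropy} at $\lambda=\nu$, absorb the $\DIV$-type source into the combined dissipation, use $\int_0^\infty \dd t/\tau(t)^2<\infty$ for the mass source term, and conclude as in Lemma~\ref{lem:aprioriE} and Proposition~\ref{prop:apriori-energy}. The sign checks you list ($\lambda\ge0$, $G''\ge0$, $P'(0)>0$) and the recombination $|D W_\nu|^2$ vs.\ $|AW_\nu|^2=|AU|^2$ to recover the full $\nabla U$ are exactly the ingredients the paper invokes.

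There is, however, a flaw in the specific absorption step. You write
\[
\frac{\nu\dot\tau}{\tau^3}\int R|\nabla W_\nu|
\le C\frac{\nu\dot\tau^2}{\tau^2}\int R
+\frac12\cdot\frac{\nu}{\tau^4}\int R|\nabla W_\nu|^2
\]
and claim the last piece is ``half the relevant dissipation term.'' With $\lambda=\nu$ one has $\lambda_2=-\nu<0$, so $\mathcal D_\nu$ contains
\[
\frac{\lambda_2}{\tau^4}\int R|D W_\nu|^2+\frac{\lambda}{\tau^4}\int R|\nabla W_\nu|^2
=\frac{\nu}{\tau^4}\int R|A W_\nu|^2,
\]
i.e.\ only the antisymmetric part. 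If you remove $\tfrac12\frac{\nu}{\tau^4}\int R|\nabla W_\nu|^2$ from this, what remains is $\frac{\nu}{2\tau^4}\int R(|A W_\nu|^2-|D W_\nu|^2)$, which is sign-indefinite, so the absorption does not close. (When $\eps>0$ you could repair this via the $\frac{\nu\eps^2}{4\tau^4}\int R|\nabla^2\ln R|^2$ dissipation, but the statement covers $\eps=0$ too.) The correct route is to use the sign structure of the source, which carries $\int R\DIV W_\nu$ without an absolute value:
\[
\int R\,\DIV W_\nu=\int R\,\DIV U+\nu\int R\,\Delta\ln R=\int R\,\DIV U-4\nu\int|\nabla\sqrt R|^2.
\]
The first piece is then absorbed by the symmetric dissipation $\frac{\nu}{\tau^4}\int R|DU|^2$ coming from $\mathcal D$, exactly as in Proposition~\ref{prop:apriori-energy}, and the second produces $+4\nu^2\frac{\dot\tau}{\tau^3}\int|\nabla\sqrt R|^2$ on the right, which is dominated for $t$ large by the term $\frac{4\nu P'(0)}{\tau^2}\int|\nabla\sqrt R|^2$ present in $\mathcal D_\nu$ (since $\dot\tau/\tau\to0$ by Lemma~\ref{lem:taubis}), and handled on a bounded interval by Gronwall. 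With this modification the rest of your argument — reading off each line of \eqref{eq:apriori-BDentropy} from $\mathcal E+\mathcal E_\nu$ and $\mathcal D+\mathcal D_\nu$, and the two final observations from the algebraic identities in \cite{Jungel,VasseurYu} — is sound and coincides with the paper's.
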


\begin{proof}[Proof of Proposition~\ref{prop:apriori-BDentropy}]
From \eqref{eq:evol-pseudo} and Lemma~\ref{lem:pseudo-lambda-entropy} with $\lambda = \nu,$ it follows
$$
\frac{\dd}{\dd t} \( \mathcal E(t) + \mathcal E_{\nu}(t)  \)
+ \mathcal D(t) + \mathcal D_{\nu}(t) = \frac{2 d \nu}{\tau^2} \int R,
$$
and we conclude in a similar way as in the proof 
of Proposition~\ref{prop:apriori-energy}, using now that $\int
dt/\tau(t)^{2} < \infty$ and recalling that $P'( \sigma ) = P'(0) + \sigma G''( \sigma )$ with $P'(0) >0$ and $G'' \ge 0$. 
\end{proof}

\subsubsection{Mellet-Vasseur estimate}
\label{sec:mellet-vasseur}

It turns out that the above estimates are insufficient for the construction of solutions to \eqref{eq:NSK} via 
a compactness approach: the above information do not enable to pass to the limit in the convective term $R U \otimes U$
(see the introduction of \cite{VasseurYuInventiones} for more precise statements). 
So, when $0 \le \varepsilon < \nu$, we add a further estimate that we
adapt from \cite{MelletVasseur,AS-comp} to the isothermal case.
For this, we restrict from now to the isothermal case $P(\rho)\equiv \kappa \rho.$ 

\begin{proposition}\label{prop:inegaliteMV}
Let $\nu >0$ and $0 \le \varepsilon \le \nu$, 
$P(\rho) = \kappa \rho$ with $\kappa >0$, and $T>0$. Assume that the initial
data satisfy
\[
R_0(1+|y|^2 + \ln R_0) \in L^1 (\R^d), \quad
 \sqrt{R_0} U_0 \in L^2(\R^d) , \quad
(\eps+\nu) \nabla \sqrt{R_0} \in L^2(\R^d). 
\]
 Let $(R,U)$ be a smooth
solution to \eqref{eq:RU} associated to the initial data
$(R_0,U_0)$. Consider $\lambda(\eps) := (\nu - \sqrt{\nu^2 -
  \eps^2})/2 \ge 0$ and define the effective velocity  
$$
W_\eps := U + \lambda(\eps) \nabla \ln R,
$$
so that $(R,W_\eps)$ satisfies \eqref{eq:RWlambda-eps}. Denote $\varphi_{MV}(z) = (1+z)\ln(1+z)$ for $z \ge 0$, and suppose further that
$$
\int_{\R^d} R_0 \varphi_{MV}\left(|W_{\eps,0}|^2 + |y|^2\right) \dd y < \infty.
$$
Then there exists a constant $K_T$ depending only on $T$ and $C''_0$ depending only on initial data such that
\begin{multline*}
\sup_{t \in (0,T)} \left\{ \int_{\R^d} R \varphi_{MV}\left(|W_\eps|^2
    + |y|^2\right)  \dd y\right\} \\ 
+ \int_{0}^T \int_{\R^d} R \varphi'_{MV}\left(|W_\eps|^2+ |y|^2\right) \{ \lambda(\eps) |\nabla W_\eps|^2 + \lambda_2(\eps)|D(W_\eps)|^2 \} \, \dd y \, \dd t
\le K_T C''_0 ,
\end{multline*}
where $\lambda_2(\eps) := \sqrt{\nu^2 - \eps^2} \ge 0$.
\end{proposition}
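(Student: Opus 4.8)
The plan is to run the Mellet--Vasseur argument of \cite{MelletVasseur,AS-comp} directly on the effective-velocity system \eqref{eq:RWlambda-eps}, exploiting that in the isothermal case $P'\equiv\kappa$ and that, with $\lambda=\lambda(\eps)$, one has $\lambda_1=0$, so no Korteweg term survives in the equation for $W_\eps$. First I would derive the transport equation for $W_\eps$ alone: eliminating $\partial_t R$ from the momentum equation (i.e. writing $R\partial_tW_\eps=\partial_t(RW_\eps)-W_\eps\partial_tR$) and using $\DIV(RW\otimes W)-W\DIV(RW)=R(W\cdot\nabla)W$ and $\Delta(RW)-W\Delta R=2\nabla R\cdot\nabla W+R\Delta W$, one gets
\[
R\partial_t W_\eps=-\tfrac{1}{\tau^2}R(W_\eps\cdot\nabla)W_\eps-2\kappa\,yR-\kappa\nabla R+\tfrac{\lambda_2}{\tau^2}\DIV(R\,DW_\eps)+\tfrac{\lambda}{\tau^2}\bigl(R\Delta W_\eps+2\nabla R\cdot\nabla W_\eps\bigr)+\tfrac{\nu\dot\tau}{\tau}\nabla R .
\]
Since the estimate is local in $[0,T]$, I would fix $\tau_T:=\tau(T)$, $\dot\tau_T:=\dot\tau(T)$ and note that all the time-dependent coefficients are bounded above and below there (so they only affect $K_T$), and I would record the available a priori bounds: Proposition~\ref{prop:apriori-energy}, together with Lemma~\ref{lem:pseudo-lambda-entropy} applied \emph{both} with $\lambda=\nu$ (i.e. Proposition~\ref{prop:apriori-BDentropy}) and with $\lambda=\lambda(\eps)$; since $W_\eps=U+\lambda(\eps)\nabla\ln R$, these give on $[0,T]$ both $\sup_{t}\int R(1+|y|^2+|\ln R|+|W_\eps|^2)<\infty$ and $\int_0^T\!\!\int R\bigl(|DW_\eps|^2+|\nabla W_\eps|^2\bigr)\,\dd t<\infty$.

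Next I would set $g:=|W_\eps|^2+|y|^2$ and compute $\frac{\dd}{\dd t}\int R\,\varphi_{MV}(g)=\int\partial_t R\,\varphi_{MV}(g)+2\int R\,\varphi_{MV}'(g)\,W_\eps\cdot\partial_t W_\eps$, inserting the continuity equation and the displayed $W_\eps$-equation and integrating by parts, using $\varphi_{MV}'(z)=1+\ln(1+z)$, $\varphi_{MV}''(z)=1/(1+z)$, the elementary inequalities $z\varphi_{MV}''(z)\le1$, $z\varphi_{MV}'(z)\le\varphi_{MV}(z)+z$, $\varphi_{MV}'(z)\le1+\varphi_{MV}(z)$, together with $|\nabla g|\le2\sqrt g\,(|\nabla W_\eps|+1)$ and $|W_\eps\cdot y|\le\frac12 g$. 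Two structural facts make this go through. (i) The convective term of the momentum equation and the transport of $\varphi_{MV}(g)$ produced by the convective part of the continuity equation \emph{cancel} up to the lower-order term $\frac{2}{\tau^2}\int R\varphi_{MV}'(g)(W_\eps\cdot y)$, which combines with the confining contribution $-4\kappa\int R\varphi_{MV}'(g)(W_\eps\cdot y)$ and is controlled by $C_T\int R(\varphi_{MV}(g)+g)$ (the first piece feeding Gr\"onwall, the second bounded). (ii) The viscous and Laplacian terms produce the good dissipation $-\frac{2\lambda_2}{\tau^2}\int R\varphi_{MV}'(g)|DW_\eps|^2-\frac{2\lambda}{\tau^2}\int R\varphi_{MV}'(g)|\nabla W_\eps|^2$; keeping a fraction of it on the left and using $1/\tau^2\ge1/\tau_T^2$ on $[0,T]$, this is, up to $K_T$, the dissipation appearing in the statement.

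The remaining terms I would estimate as follows. The $\varphi_{MV}''$-cross-terms produced by integrating by parts the diffusive terms are, by $\varphi_{MV}''(g)g\le1$ and the bound on $|\nabla g|$, pointwise $\lesssim R(|\nabla W_\eps|^2+1)$, hence integrable in time. The $\nabla R$-terms (the pressure $\kappa\nabla R$, $\frac{\nu\dot\tau}{\tau}\nabla R$, and $\frac{2\lambda}{\tau^2}\nabla R\cdot\nabla W_\eps$ together with the $\nabla R$-piece of $\Delta(RW_\eps)$), integrated by parts against $\varphi_{MV}'(g)W_\eps$, split into: a piece $\propto\int R\varphi_{MV}'(g)\DIV W_\eps$, absorbed into the retained dissipation via $|\DIV W_\eps|^2\le d\,|DW_\eps|^2$ (resp. $\le d\,|\nabla W_\eps|^2$) and Young's inequality, plus $C_T\int R(1+\varphi_{MV}(g))$; a $\varphi_{MV}''$-piece of the previous type; and cross-terms of the form $\frac{\lambda}{\tau^2}\int\varphi_{MV}'(g)\,\nabla R\cdot\nabla|W_\eps|^2$, which — writing $|\nabla R|=2\sqrt R|\nabla\sqrt R|$ and applying Young against $R\varphi_{MV}'(g)|\nabla W_\eps|^2$ — are bounded by a fraction of the good dissipation plus $C_T\int\varphi_{MV}'(g)\,g\,|\nabla\sqrt R|^2$. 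Collecting everything yields $\frac{\dd}{\dd t}\int R\varphi_{MV}(g)+(\text{good dissipation})\le a(t)\int R\varphi_{MV}(g)+b(t)$ with $a,b\in L^1(0,T)$ of norm depending only on $T$ and the a priori constants, and Gr\"onwall's lemma gives the claim, with $C_0''$ the sum of $\int R_0\varphi_{MV}(|W_{\eps,0}|^2+|y|^2)$ and the (finite) data in Proposition~\ref{prop:apriori-BDentropy}, and $K_T$ controlled through $\tau(T),\dot\tau(T)$.

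I expect the main obstacle to be precisely the control of the cross-terms coupling $\nabla\sqrt R$ (equivalently $\nabla\ln R$, which enters through the effective velocity when $\eps>0$) with the velocity gradient, namely the term $C_T\int\varphi_{MV}'(g)\,g\,|\nabla\sqrt R|^2$ above: the bare energy and BD-entropy bounds only give $\nabla\sqrt R\in L^\infty_tL^2$, which is insufficient against a weight growing like $\varphi_{MV}(g)$. Handling it requires the more careful completion of squares performed in \cite{MelletVasseur,AS-comp}, which produces, besides the stated dissipation, additional nonnegative contributions (and uses the full $\lambda$-entropy dissipation of Lemma~\ref{lem:pseudo-lambda-entropy}) absorbing exactly these terms; the isothermal structure — linearity of the pressure and the resulting clean convective/continuity cancellation noted above — is what keeps the remaining bookkeeping manageable.
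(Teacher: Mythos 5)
Your overall strategy — compute $\tfrac{\dd}{\dd t}\int R\,\varphi_{MV}(|W_\eps|^2+|y|^2)$ using the $W_\eps$-equation, exploit the isothermal linearity of the pressure, absorb part of the viscous dissipation, and close by Gr\"onwall — is exactly the paper's. Your derivation of
$R\partial_t W_\eps$ is also correct and matches the paper's, as does the treatment of the convective cancellation. The gap is in how you handle the $\lambda$-Laplacian contributions, and it is not a technical one: you fail to notice an exact algebraic cancellation that is the heart of the paper's proof.

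Concretely: when you compute $\int\partial_t R\,\varphi_{MV}(g)$ (with $g=|W_\eps|^2+|y|^2$), the effective continuity equation contributes $\tfrac{\lambda}{\tau^2}\int\Delta R\,\varphi_{MV}(g)$ in addition to the convective piece, and you never track this term. The paper bundles it with the momentum contribution $\tfrac{2\lambda}{\tau^2}\int\varphi'_{MV}(g)\,W_\eps\cdot\big[\Delta(RW_\eps)-(\Delta R)W_\eps\big]$ into the single integral $I_0$, and then integrates $I_0$ by parts. Doing so, one finds that the three sources of $\nabla R\cdot\nabla|W_\eps|^2$ cross-terms — from (a) integrating $\Delta R\,\varphi_{MV}(g)$ by parts (coefficient $-1$), (b) integrating $R\,\varphi'_{MV}(g)\Delta|W_\eps|^2$ by parts after writing $W_\eps\cdot\Delta W_\eps=\tfrac12\Delta|W_\eps|^2-|\nabla W_\eps|^2$ (coefficient $-1$), and (c) the $2\nabla R\cdot\nabla W_\eps$ piece of $\Delta(RW_\eps)-(\Delta R)W_\eps$ tested against $\varphi'_{MV}W_\eps$, using $(\nabla R\cdot\nabla W_\eps)\cdot W_\eps=\tfrac12\nabla R\cdot\nabla|W_\eps|^2$ (coefficient $+2$) — cancel exactly ($-1-1+2=0$). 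After a further integration by parts of the remaining $\nabla R\cdot y$ contribution, what survives is
\[
I_0=-\int R\,\varphi''_{MV}(g)\,\big|\nabla|W_\eps|^2\big|^2-2\int R\,\varphi'_{MV}(g)\,|\nabla W_\eps|^2
+\O\!\left(\int R\big(1+|y|^2+|W_\eps|^2\big)\right),
\]
i.e.\ two nonpositive dissipation terms plus a remainder controlled by the a priori energy/BD-entropy bounds. Because you overlooked the $\Delta R\,\varphi_{MV}$ piece, you are led to estimate $\tfrac{\lambda}{\tau^2}\int\varphi'_{MV}(g)\,\nabla R\cdot\nabla|W_\eps|^2$ by Young, producing the term $C_T\int\varphi'_{MV}(g)\,g\,|\nabla\sqrt R|^2$ which — as you correctly observe — cannot be controlled by the available bounds. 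But this term never arises: the resolution is the cancellation above, a clean algebraic identity encoded in the paper's $I_0$, not a "more careful completion of squares" or an appeal to additional dissipation from Lemma~\ref{lem:pseudo-lambda-entropy}. Once this is in place, the rest of your estimates for $I_1,I_2,I_3$ and the Gr\"onwall closing match the paper's.
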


\begin{remark}
  The functional is not quite the same as in \cite{MelletVasseur},
  where the authors analyze $\varphi_{MV}(|u|^2)$. Considering an
  effective velocity follows from \cite{AS-comp}. On
  the other hand, the introduction of the term $|y|^2$ is due to the
  presence of term $yR$ in \eqref{eq:RU}, which is a specific feature
  of our approach adapted to the isothermal case, and seems necessary
  in order to obtain closed estimates. 
\end{remark}

\begin{proof}
We first remark that, by construction, we have:
\begin{equation} \label{eq_boundphiMV}
\varphi'_{MV}(z) = 1 + \ln(1+{z}), \qquad z |\varphi''_{MV}(z)| \le 1,
\quad \forall \, z \ge 0.   
\end{equation}
Given $(R,W_\eps)$ a solution to \eqref{eq:RWlambda-eps}, we have then:
\begin{multline*}
\dfrac{{\rm d}}{{\rm d}t}\int R \, \varphi_{MV}\left(|W_\eps|^2+ |y|^2\right) 
\\
= \int \partial_t R \, \varphi_{MV}\left(|W_\eps|^2+ |y|^2\right) + 
 2\int R \varphi'_{MV}\left(|W_\eps|^2 + |y|^2\right) W_\eps \cdot \partial_t W_\eps   .
\end{multline*} 
For conciseness, we drop the arguments of $\varphi_{MV}$ and its derivative in what follows. We may then rewrite the last integral on the right-hand side by applying that
\[
\begin{aligned}
R \partial_t W_\eps 
&=\dfrac{\lambda(\eps)}{\tau^2} [\Delta (R W_\eps) - (\Delta R) W_\eps ] 
+\dfrac{\lambda_2(\eps)}{\tau^2}{\rm div} (R D(W_\eps)) \\
&\quad
+ \dfrac{\nu \dot{\tau}}{\tau} \nabla R - \kappa \nabla R - 2\kappa y R
- \dfrac{1}{\tau^2} R W_\eps \cdot \nabla W_\eps, 
\end{aligned}
\]
from which we obtain 
\begin{align} \label{eq_debase}
 \dfrac{{\rm d}}{{\rm d}t}\int R \, \varphi_{MV}
& = \dfrac{\lambda(\eps)}{\tau^2} I_0
 +2\dfrac{\lambda_2(\eps)}{\tau^2}  I_1 \\
&  \qquad + 2\left(\dfrac{\nu \dot{\tau}}{\tau} - \kappa\right) I_2 + 2\left( \dfrac{1}{\tau^2}
   - 2 \kappa \right) I_3 ,\notag
\end{align}
with 
\begin{align*}
 I_0 &= \int   \left\{ 2[\Delta (R W_\eps) - (\Delta R) W_\eps ]\cdot W_\eps  \varphi'_{MV}  + \Delta R \varphi_{MV} \right\} ,\\
 I_1 & = \int {\rm div} (R D(W_\eps)) \cdot W_\eps  \varphi'_{MV} ,\\
 I_2 & = \int  \nabla R \cdot W_\eps \varphi'_{MV}, \\
 I_3 & = \int y R \cdot W_\eps  \varphi'_{MV} .
\end{align*}
We compute bounds above for these integrals by applying standard transformations
and application of \eqref{eq_boundphiMV}. By integrating by parts we
obtain
\begin{align*}
I_0 & =  - \int R |\nabla (|W_\eps|^2)|^2 \varphi''_{MV} - 2\int R |\nabla W_\eps|^2 \varphi'_{MV} + 2\int R (\varphi'_{MV} + 2|y|^2 \varphi''_{MV}) \\
& =  - \int R |\nabla (|W_\eps|^2)|^2 \varphi''_{MV} - 2\int R |\nabla W_\eps|^2 \varphi'_{MV} 
+\O\left( \int R (1+ |y|^2 + |W_\eps|^2) \right) .
\end{align*}
For the term $I_1$ we have
\begin{align*}
I_1 & =  - \int R \varphi'_{MV}|D(W_\eps)|^2 + \O\left( \int R (|W_\eps|^2+ |y|^2) |\varphi''_{MV}| |D(W_\eps)| |\nabla W_\eps|\right)  \\
		& = - \int R \varphi'_{MV}|D(W_\eps)|^2 +  \O\left( \int R  |\nabla W_\eps|^2\right).
\end{align*}
We compute $I_{2}$ by integrating by parts, which gives
\begin{align*}
I_{2} & =  -  \int R \, {\rm div} \, W_\eps \, \varphi'_{MV} -  2\int R
      \left[ (W_\eps \cdot \nabla) W_\eps \right] \cdot W_\eps
      \varphi''_{MV}  -2 \int R W_\eps \cdot y \varphi''_{MV}   ,
\end{align*}
and introducing an absolute constant $C$ and a small parameter $\eta >0$ to be fixed later on, we obtain
\begin{align*}
|I_{2}|	& \le C\left( \int R |D(W_\eps)| \varphi'_{MV} + \int R (|W_\eps|^2 + |y|^2) \varphi''_{MV} |\nabla W_\eps|\right) \\
	& \le \eta \int R \varphi'_{MV} |D(W_\eps)|^2 + \dfrac{C}{\eta} \left( \int R |\nabla W_\eps|^2 + \int R (1+ \varphi'_{MV} )\right)   \\
		& \le \eta \int R \varphi'_{MV} |D(W_\eps)|^2 + \dfrac{C}{\eta} \left( \int R |\nabla W_\eps|^2 + \int R (1+ |y|^2 + |W_\eps|^2) \right).
\end{align*}

Concerning  $I_3,$  Young inequality yields
\begin{align*}
|I_3|  & \le \int R  |y| |W_\eps| \left(\ln\left(1+|W_\eps|^2 + |y|^2\right) + 1\right)   
		 \le \int R \left( |y|^2 + |W_\eps|^2\right) + \int R \varphi_{MV}.
\end{align*}
We substitute $I_0$, $I_1,$ $I_2$ and $I_3$ with these computations into
\eqref{eq_debase}, and we remark that $\tau$ and $1/\tau$ are uniformly
bounded with their derivatives on $[0,T].$  We obtain then that there
exist positive constants $c_{T}$ and $C_T$ depending only on $T$ for
which 
\begin{multline} \label{eq_debase2} 
 \dfrac{{\rm d}}{{\rm d}t}\int R \, \varphi_{MV}
 + c_T   \int R \varphi'_{MV} \{ \lambda(\eps) |\nabla W_\eps|^2 +  \lambda_2(\eps) |D(W_\eps)|^2 \}
 \le \(C_T(\nu +1) + \kappa\) \times \\
\times\(\eta\int R \varphi'_{MV}|D(W_\eps)|^2 + 
 \dfrac{1}{\eta} \left(\int R|\nabla W_\eps|^2 + \int R(1+|y|^2 +|W_\eps|^2) + \int R \varphi_{MV} \right)  \)  .
\end{multline}
Choosing $\eta$ sufficiently small so the first term of the right hand
side is absorbed by the left hand side, we are in position to apply a
Gronwall lemma 
to $\int R \varphi_{MV}.$ We note here that combining the estimates of Propositions~\ref{prop:apriori-energy} and \ref{prop:apriori-BDentropy} entails 
\[
\int_0^T  \!\int_{\R^d} R |\nabla W_\eps|^2 + \int_0^T \! \int_{\R^d}
R(1 +|y|^2 + |W_\eps|^2)  \le K_T (C_0 + C'_0) , 
\]
and we obtain
\[
\sup_{t \in (0,T)} \int_{\R^d} R \varphi_{MV} \le K_T C''_0.
\]
It remains to integrate \eqref{eq_debase2} to conclude.
\end{proof} 

\begin{remark}
We note that, when $\varepsilon >0,$ the choice $\varphi_{MV}(z) =
(1+z)\ln(1+z)$ is not unique.  Indeed, with the term $I_0$ we control
a full gradient of $W_{\varepsilon}$, while the term $I_1$ only enables
a control of the symmetric part of this gradient. Hence, when
$\varepsilon =0$ we have to choose an entropy such that $z
\varphi^{''}_{MV}$ is bounded, and the parasite term appearing in $I_1$
is controlled with the previous pseudo-entropy estimate. On the other
hand, when 
$\varepsilon >0$, this Mellet-Vasseur estimate is self-consistent 
and  we can afford entropies $\varphi$ such that $z \varphi^{''} \lesssim  \varphi^{'},$ typically, any power-like entropy.
\end{remark}

\begin{remark}
The restriction $\varepsilon \le \nu$ is mandatory to enable the
choice of a parameter $\lambda$ such that the Korteweg term
disappears in the system for $(R,W_\lambda)$, see \eqref{eq:RWlambda-eps}.  
\end{remark}

\subsection{Compactness of weak solutions} 

In this section we assume $\nu >0$ and $0 \le \varepsilon \le \nu$, and we consider the isothermal case 
$P(\rho)\equiv \kappa \rho$ with $\kappa >0$.
From Section \ref{sec:apriori}, any classical solution $(R,U)$ to \eqref{eq:RU} on $(0,T)$ decaying sufficiently fast at infinity satisfies the following {\em a priori} estimates: 

\begin{itemize}
\item[$\bullet$] The conservation of mass:
\begin{equation} \label{eq_mass}
\sup_{t \in (0,T)} \int_{\R^d} R = M_0 
\end{equation}
where $M_0$ is the mass of the initial data,
\item[$\bullet$] From the dissipation of the pseudo energy:
\begin{multline} \label{eq_dissipation_eps<nu}
 \sup_{t \in (0,T)} \left\{ \dfrac{1}{2\tau^2} \int_{\R^d} \left( R|U|^2 + \eps^2|\nabla \sqrt R|^2 \right) + \kappa \int_{\R^d} R ( |y|^2 +  |\ln R|) \right\}  \\
+ \int_0^T \left( \dfrac{\dot{\tau}}{\tau^3} \int_{\R^d} \left( R
    |U|^2 + \eps^2|\nabla \sqrt R|^2 \right)   + \dfrac{\nu}{\tau^4}
  \int_{\R^d} |\mathbb S_{N}|^2 \right) \dd t 
\le  C_0 , 
\end{multline}
where $C_0$ depends on initial data only. 
\item[$\bullet$] From the dissipation of the pseudo BD-entropy:
\begin{multline}\label{eq_BD_eps<nu}
\sup_{t \in (0,T)} \left\{\dfrac{1}{2\tau^2} \int \left( R |U + \nu \nabla \ln R|^2 + \eps^2 |\nabla \sqrt R|^2 \right)  + \kappa\int R (|y|^2 + |\ln R|) \right\} \\
+ \int_{0}^T \left(\dfrac{\dot{\tau}}{\tau^3} \int\left( R |U|^2 +
    \eps^2|\nabla \sqrt R|^2 \right)   
+ \dfrac{\nu}{\tau^4} \int |\mathbb A_N|^2 
+ \dfrac{4\nu \kappa }{\tau^2} \int |\nabla \sqrt{R}|^2 
 \right)  \dd t \\
+ \int_{0}^T \dfrac{\nu \eps^2 }{4\tau^4} 
\left(   \int R\left|\nabla^2\ln R\right|^2\right)  \dd t
\le C'_0 , 
\end{multline} 
 where $C'_0$ depends again only on initial data and $\mathbb A_N$
 stands for the skew-symmetric part of $\mathbb
 T_N$. 
\item[$\bullet$] The Mellet-Vasseur type inequality: denoting $\varphi_{MV}(z) = (1+z) \ln(1+z)$, there holds
\begin{equation} \label{eq_MV_eps<nu}
  \begin{aligned}
    \sup_{t \in (0,T)} &\left\{ \int R \varphi_{MV}\left(|W_\eps|^2 +
        |y|^2 \right) \right\}  \\
&\quad +  \int_{0}^T \int  \varphi'_{MV}\left(|W_\eps|^2+
  |y|^2\right) \{  \lambda(\eps)|\mathbb T_N|^2 + \lambda_2(\eps)|\mathbb S_N|^2 \}  
\le  C''_{0,T} , 
  \end{aligned}
\end{equation} 
with $C''_{0,T}$ depending only on initial data and $T$, where hereafter in this section we define the effective velocity $W_\eps$ associated to $(R,U)$ by
\begin{equation*}
W_\eps := U + \lambda(\eps) \nabla \ln R, 
\end{equation*}
where we recall that $\lambda(\eps) = \frac{\nu - \sqrt{\nu^2 - \eps^2}}{2} \ge 0$ and $\lambda_2(\eps) = \sqrt{\nu^2 - \eps^2} \ge 0$.
\end{itemize}

We proceed by studying the compactness of weak solutions to
\eqref{eq:RU} which satisfy the estimates 
\eqref{eq_mass}--\eqref{eq_dissipation_eps<nu}--\eqref{eq_BD_eps<nu}--\eqref{eq_MV_eps<nu}. 
The different arguments follow closely the proof of
\cite[Theorem~2.1]{MelletVasseur}.    

%

\begin{theorem}\label{theo:compacite}
Assume $\nu >0$ and $0 \le \varepsilon \le \nu$, $P(\rho)=\kappa \rho$ with $\kappa >0$, and
let $T>0$. 
Consider $(\sqrt{R_n}, \sqrt{R_n} U_n)_{n\in \mathbb N}$ a sequence of
weak solutions to \eqref{eq:RU} 
satisfying
\eqref{eq_mass}-\eqref{eq_dissipation_eps<nu}-\eqref{eq_BD_eps<nu}-\eqref{eq_MV_eps<nu} 
with constants $C_0,C'_0,C^{''}_{0,T}$ independent of $n \in \mathbb
N$,  and denote by $\mathbb S_{K,n}$ and 
$\mathbb T_{N,n}$ the tensors associated to $(\sqrt{R_n}, \sqrt{R_n}
U_n)$. 
Then, there exists $(\sqrt{R},\sqrt{R}U)$, with associated tensors $\mathbb S_K$ and $\mathbb T_N$, such that:
\begin{itemize}
\item[i)] Up to the extraction of a subsequence, $(\sqrt{R_n},\sqrt{R_n} U_n,\mathbb T_{N,n})_{n\in \mathbb N}$ satisfy
\begin{align*}
& \sqrt{R}_n \to \sqrt{R} &&  \text{ in } C([0,T) ; L^2(\mathbb R^d) ) ,   \\
& \sqrt{R}_n U_n \to \sqrt{R}U &&\text{ in $L^2(0,T;L^2(\mathbb R^d)),$} \\
& \mathbb T_{N,n} \rightharpoonup \mathbb T_N && \text{ in $L^2(0,T;L^2(\mathbb R^d))-w$},
\end{align*}
\item[ii)] $(\sqrt{R},\sqrt{R} U)$ is a weak solution to \eqref{eq:RU}
  in the sense of Definition~\ref{def_weaksol}. 
\end{itemize}
\end{theorem}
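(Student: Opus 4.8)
The plan is to follow the classical compactness scheme for degenerate Navier--Stokes equations as in \cite{MelletVasseur}, adapted to the rescaled system \eqref{eq:RU} for which the coefficients $\tau, 1/\tau$ and their derivatives are uniformly bounded on $[0,T]$. First I would extract the basic weak limits from the uniform bounds \eqref{eq_mass}--\eqref{eq_dissipation_eps<nu}--\eqref{eq_BD_eps<nu}--\eqref{eq_MV_eps<nu}. The mass bound gives $\sqrt{R_n}$ bounded in $L^\infty(0,T;L^2)$, while \eqref{eq_dissipation_eps<nu} and \eqref{eq_BD_eps<nu} give $(\eps+\nu)\nabla\sqrt{R_n}$ bounded in $L^\infty(0,T;L^2)$, $\sqrt{R_n}U_n$ bounded in $L^\infty(0,T;L^2)$, $\sqrt\nu\,\mathbb T_{N,n}$ bounded in $L^2(0,T;L^2)$, and $\eps\sqrt\nu\,\nabla^2\sqrt{R_n}$ bounded in $L^2(0,T;L^2)$. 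From the continuity equation in $\sqrt{R_n}$ one controls $\partial_t\sqrt{R_n}$ in $L^2(0,T;H^{-1})$, so an Aubin--Lions argument yields $\sqrt{R_n}\to\sqrt{R}$ strongly in $C([0,T];L^2_{\rm loc})$ and a.e.; the weight $\<y\>^2 R_n\in L^\infty(0,T;L^1)$ upgrades this to $C([0,T];L^2(\R^d))$ by tightness. Passing to a subsequence we also get $\mathbb T_{N,n}\rightharpoonup\mathbb T_N$ and $\nabla^2\sqrt{R_n}\rightharpoonup\nabla^2\sqrt{R}$ weakly in $L^2(0,T;L^2)$, $\nabla\sqrt{R_n}\rightharpoonup\nabla\sqrt{R}$ weakly-$*$ in $L^\infty(0,T;L^2)$, and $\mathbb S_{K,n}\rightharpoonup\mathbb S_K$ in the appropriate space.

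The next step is the strong convergence of the momentum $m_n:=\sqrt{R_n}\cdot\sqrt{R_n}U_n=R_nU_n$. Following \cite{MelletVasseur}, one shows $m_n$ is bounded in $L^\infty(0,T;L^{3/2})$ (interpolating $\sqrt{R_n}\in L^\infty L^2$ and $\sqrt{R_n}U_n\in L^\infty L^2$ with the $L^1$ mass bound and some $L^p$ gain on $R_n$) and that $\partial_t m_n$ is bounded in $L^1(0,T;W^{-s,1})$ for suitable $s$ by reading off the momentum equation \eqref{eq:NSKrevu}: every term on the right-hand side ($\DIV(\sqrt{R_n}U_n\otimes\sqrt{R_n}U_n)$, $\kappa\nabla R_n$, $yR_n$, $\DIV(\tfrac\nu{\tau^2}\sqrt{R_n}\mathbb S_{N,n})$, $\DIV(\tfrac{\eps^2}{2\tau^2}\mathbb S_{K,n})$, $\tfrac{\nu\dot\tau}{\tau}\nabla R_n$) is controlled in a negative Sobolev space by the uniform estimates. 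Aubin--Lions then gives $m_n\to m$ strongly in $L^2(0,T;L^2_{\rm loc})$, and combined with $\sqrt{R_n}\to\sqrt{R}$ a.e. one identifies $m=\sqrt{R}\,\sqrt{R}U$ with $\sqrt{R}U:=m/\sqrt{R}\mathbf 1_{\sqrt{R}>0}$ and obtains $\sqrt{R_n}U_n\to\sqrt{R}U$ in $L^2(0,T;L^2)$ — here the Mellet--Vasseur bound \eqref{eq_MV_eps<nu} on $R_n\varphi_{MV}(|W_{\eps,n}|^2+|y|^2)$ provides the uniform integrability of $R_n|U_n|^2$ needed to rule out concentration and to pass to the limit in the convective term $\sqrt{R_n}U_n\otimes\sqrt{R_n}U_n\rightharpoonup\sqrt{R}U\otimes\sqrt{R}U$.

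With these convergences in hand, passing to the limit in the two equations of \eqref{eq:NSKrevu} in $\mathcal D'$ is essentially linear in the limiting objects: the continuity equation for $\sqrt{R_n}$ passes by strong $L^2$ convergence of $\sqrt{R_n}$, weak convergence of $\DIV(\sqrt{R_n}U_n)$ and weak convergence of $\mathrm{Trace}(\mathbb T_{N,n})$; the momentum equation passes term by term using the strong convergence of $m_n$ and $\sqrt{R_n}U_n$, the weak convergence of $\mathbb T_{N,n}$, $\mathbb S_{K,n}$ and $\nabla R_n$. It remains to recover the compatibility conditions \eqref{eq_compnewton}--\eqref{eq_compkorteweg} for the limit: $\sqrt{R}\mathbb T_N=\nabla(\sqrt{R}\sqrt{R}U)-2\sqrt{R}U\otimes\nabla\sqrt{R}$ and $\mathbb S_K=\sqrt{R}\nabla^2\sqrt{R}-\nabla\sqrt{R}\otimes\nabla\sqrt{R}$. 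For $\mathbb S_K$ this follows from $\sqrt{R_n}\to\sqrt{R}$ strongly in $L^2$, $\nabla\sqrt{R_n}\rightharpoonup\nabla\sqrt{R}$ weakly in $L^2$ (hence $\nabla\sqrt{R_n}\otimes\nabla\sqrt{R_n}$ converges in $\mathcal D'$ only if one has a.e. convergence of $\nabla\sqrt{R_n}$, which the extra bound $\eps\sqrt\nu\,\nabla^2\sqrt{R_n}\in L^2L^2$ together with an Aubin--Lions argument on $\nabla\sqrt{R_n}$ provides when $\eps>0$; when $\eps=0$ the term is absent), and $\sqrt{R_n}\nabla^2\sqrt{R_n}\rightharpoonup\sqrt{R}\nabla^2\sqrt{R}$ by strong times weak convergence. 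For $\mathbb T_N$ one writes $\nabla(R_nU_n)-2\sqrt{R_n}U_n\otimes\nabla\sqrt{R_n}=\sqrt{R_n}\mathbb T_{N,n}$ and passes to the limit using strong convergence of $R_nU_n$ and $\sqrt{R_n}U_n$ against weak convergence of $\nabla\sqrt{R_n}$ and of $\mathbb T_{N,n}$, the product $\sqrt{R_n}\mathbb T_{N,n}\rightharpoonup\sqrt{R}\mathbb T_N$ being justified by $\sqrt{R_n}\to\sqrt{R}$ strongly in $C L^2$. I expect the main obstacle to be exactly this identification of the nonlinear limits — the convective term $\sqrt{R_n}U_n\otimes\sqrt{R_n}U_n$ and the viscous term $\sqrt{R_n}\mathbb S_{N,n}$ — where one has only weak convergence of one factor; this is where the Mellet--Vasseur estimate \eqref{eq_MV_eps<nu} is indispensable (for the convective term) and where one needs the strong $L^2$ convergence of $\sqrt{R_n}$ upgraded by the weighted bounds (for the viscous term), together with the truncation/Fatou arguments of \cite{MelletVasseur} to handle the vacuum set $\{\sqrt{R}=0\}$ and the compatibility conditions there.
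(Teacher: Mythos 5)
Your proposal follows the same Mellet--Vasseur compactness scheme as the paper: Aubin--Lions for $\sqrt{R_n}$ via the continuity equation, Aubin--Lions for the momentum $R_n U_n$ via the momentum equation, definition of the limit velocity on $\{\sqrt R > 0\}$, the Mellet--Vasseur bound together with a truncation and Fatou argument to upgrade to strong $L^2$ convergence of $\sqrt{R_n} U_n$, and finally term-by-term passage to the limit in the equations and compatibility conditions. In outline this is exactly the paper's proof.

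There is, however, one step as you state it that would not close. To apply Aubin--Lions to $m_n = R_n U_n$ it is not enough to have $m_n$ bounded in $L^\infty(0,T;L^{3/2})$ and $\partial_t m_n$ bounded in a negative space: you need a compact embedding on the spatial side, and $L^{3/2}(K)$ does not compactly embed into anything sitting between it and $W^{-s,1}(K)$ without a derivative gain. The paper supplies this by writing $\nabla(R_n U_n) = \sqrt{R_n}\mathbb T_{N,n} + 2\sqrt{R_n} U_n \otimes \nabla\sqrt{R_n}$, which is bounded in $L^2(0,T;L^1(\R^d))$ from the uniform bounds (b1)--(b3), so $R_n U_n$ is bounded in $L^2(0,T;W^{1,1}(\R^d))$; Aubin--Lions with the triplet $W^{1,1}(K)\hookrightarrow\hookrightarrow L^p(K)\hookrightarrow W^{-1,1}(K)$, $p<d'$, then gives the local strong compactness you want. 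Once this is inserted, your Step 2 matches the paper's. Two further remarks: the truncation in the last step must be carried out on $W_{\eps,n}$ (not on $U_n$), since the Mellet--Vasseur bound \eqref{eq_MV_eps<nu} is stated for $W_{\eps,n}$; and the passage from strong convergence of $\sqrt{R_n} W_{\eps,n}$ to that of $\sqrt{R_n} U_n$ uses the $L^2(0,T;H^1_{\rm loc})$ strong convergence of $\sqrt{R_n}$ coming from the $\eps\sqrt\nu\,\nabla^2\sqrt{R_n}$ bound (available precisely when $\eps>0$; when $\eps=0$, $\lambda(\eps)=0$ and no correction is needed). You gesture at both points but they need to be made explicit for the proof to be complete.
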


\begin{proof}[Proof of Theorem \ref{theo:compacite}]
To start with, we remark that, thanks to
\eqref{eq_mass}--\eqref{eq_dissipation_eps<nu}--\eqref{eq_BD_eps<nu},
the sequence we consider is bounded in the following respective spaces: 
\begin{itemize}
\item[({\bf b1})] 
$(\sqrt{R_n})_n$ is bounded in $L^{\infty}(0,T;H^1(\mathbb R^d) \cap
L^2 (\mathbb R^d ; |y|^2 \dd y)),$

\item[({\bf b2})] $(\sqrt{R_n}U_n)_n$ is bounded in
  $L^{\infty}(0,T;L^2(\mathbb R^d)),$ 

\item[({\bf b3})] $(\mathbb T_{N,n})_n$ is bounded in
  $L^2(0,T;L^2(\mathbb R^d)).$

\item[({\bf b4})] $(\nabla^2 \sqrt{R_n})_n$ is bounded in
  $L^2(0,T;L^2(\mathbb R^d))$ if $\eps >0$.

\end{itemize}
Up to the extraction of a subsequence, we can then construct
$(\sqrt{R},\sqrt{R}U,\mathbb T_N)$ as the following limits: 
\begin{itemize}
\item[({\bf c1})] $\sqrt{R_n} \rightharpoonup \sqrt{R}$ in $L^{\infty}(0,T;H^1(\mathbb R^d))-w*,$
\item[({\bf c2})] $\sqrt{R_n}U_n \rightharpoonup \sqrt{R}U$ in $L^{\infty}(0,T;L^2(\mathbb R^d)))-w*,$

\item[({\bf c3})] $\mathbb T_{N,n} \rightharpoonup \mathbb T_{N} $ in $L^2(0,T;L^2(\mathbb R^d))-w,$

\item[({\bf c4})] $\nabla^2 \sqrt{R_n} \rightharpoonup \nabla^2 \sqrt{R} $ in $L^2(0,T;L^2(\mathbb R^d))-w$ if $\eps >0$.

\end{itemize}
We note directly that the non-negativity of $\sqrt{R}$ is preserved in the weak limit. 

\medskip 

\paragraph{\em Step 1.}
From ({\bf b1}) and Sobolev embeddings, we have  
\begin{equation*} \label{eq_boundprecise} 
\tag{\textbf{b5}}
(\sqrt{R}_n)_{n} \text{ is bounded in } L^{\infty}(0,T;L^{q}(\mathbb R^d)) 
\text{ for all }
\begin{cases}
q \in [2,\infty) \text{ if } d = 2 , \\
q \in [2,2^*] \text{ if } d \ge 3 , 
\end{cases}
\end{equation*}
where  $2^* = 2d/(d-2)$. Together with (\textbf{b2}), this implies
\begin{equation*} \label{eq_bound-RnUn} 
\tag{\textbf{b6}} 
({R}_n U_n)_{n} \text{ is bounded in }  L^{\infty}(0,T;L^{q}(\mathbb R^d)) 
\text{ for all }
\begin{cases}
q \in [1,\infty) \text{ if } d = 2 , \\
q \in [1,d'] \text{ if } d \ge 3 , 
\end{cases}
\end{equation*}
where $d'$ is the H\"older conjugate exponent of $d$.
Recalling the continuity equation satisfied by $\sqrt{R_n}$,  
\[
\d_t\sqrt{R}_n = -\frac{1}{\tau^2}\DIV ( \sqrt{R}_n U_n ) +
\frac{1}{2\tau^2}{\rm Trace}(\mathbb T_{N,n}), 
\]
and that $\tau$ is uniformly bounded from below on $(0,T)$, the bounds
(\textbf{b1})--(\textbf{b2})--(\textbf{b3}) yield that $(\d_t \sqrt
R_n)_n$ is bounded in $L^2(0,T ; H^{-1}(\R^d))$. Consequently, as in
\cite[Lemma 4.1]{MelletVasseur}, we apply Aubin-Lions' lemma in the
form \cite[Corollary 4]{Simon87} with the triplet $H^1(\mathbb R^d)
\cap L^2 (\mathbb R^d ; |y|^2 \dd y) \subset L^{2}(\R^d) \subset
H^{-1}(\R^d)$, where the first embedding is compact. This yields that   
\begin{equation*} \label{eq_strongconvergence_R} \tag{\textbf{c5}} 
\text{$(\sqrt{R}_n)_{n}$ is relatively compact in $C([0,T];L^{2}(\mathbb R^d)).$}
\end{equation*}
Furthermore, in the case $\eps >0$, estimates
(\textbf{b1})--(\textbf{b4}) imply that $(\sqrt{R_n})_n$ is bounded in
$L^{2}(0,T;H^2(\mathbb R^d))$ which yields, applying Aubin-Lions'
lemma again, that 
\begin{equation*} \label{eq_strongconvergence_R_eps} \tag{\textbf{c5'}} 
\text{$(\sqrt{R}_n)_{n}$ is relatively compact in $L^2(0,T;H^{1}_{\rm
    loc}(\mathbb R^d))$ if $\eps >0$. } 
\end{equation*}

\medskip 
\paragraph{\em Step 2.}
The second step of the proof is to obtain the relative compactness of
$(R_nU_n)_{n}.$ We remark that, by definition: 
\[
\nabla (R_nU_n) = \sqrt{R_n} \mathbb T_{N,n} + 2 \sqrt{R_n} U_n \otimes \nabla \sqrt{R_n}.
\]
We combine here (\textbf{b1})--(\textbf{b2})--(\textbf{b3}). This yields
that $(\nabla( R_n U_n))_n$ is bounded in $L^{2}(0,T;L^{1}(\mathbb
R^d))$, hence $(R_n U_n)_{n}$ is bounded in $L^{2}(0,T;W^{1,1}(\mathbb
R^d))$ thanks to (\textbf{b5}). 
As for $\partial_{t}(R_nU_n),$ we apply the momentum equation to write:
\[
\begin{aligned}
\partial_t (R_nU_n) 
&=   - \frac{1}{\tau^2}\DIV ( \sqrt{R_n}U_n \otimes \sqrt{R_n}U_n)
- 2\kappa y R_n -  \kappa \nabla R_n   
+ \dfrac{\nu \dot{\tau}}{\tau} \nabla R_n \\
&\quad 
+ \dfrac{\eps^2}{2 \tau^2} \DIV  \mathbb S_{K,n}  
+ \dfrac{\nu}{\tau^2} \DIV ( \sqrt{R_n}\mathbb S_{N,n} ),
\end{aligned}
\]
where we recall that $\mathbb S_{K,n} =  \sqrt{R_n} \nabla^2
\sqrt{R_n} -  \nabla\sqrt{R_n} \otimes \nabla \sqrt{R_n} $.  
Again, the bounds on $\sqrt{R_n},\sqrt{R_n}U_n$, $\mathbb T_{N,n}$ and
$\nabla^2  \sqrt{R_n}$ coming from
(\textbf{b1})--(\textbf{b2})--(\textbf{b3})--(\textbf{b4}) imply that
$\partial_t (R_n U_n)$ is bounded in $L^{2}(0,T;W^{-1,1}(\mathbb
R^d)).$ So, by the Aubin-Lions' lemma with the triplet 
$W^{1,1}(K) \subset L^{p}(K) \subset W^{-1,1}(K)$ for any $p \in [1,d')$ and any compact $K \subset \R^d$, where the first embedding is compact, we obtain that  
\begin{equation*} \label{eq_cvgmomentum} \tag{\textbf{c6}}
\text{$({R}_nU_n)_{n}$ is relatively compact in $L^2(0,T;L^{p}_{\rm loc}(\mathbb R^d))$ for all $p \in [1,d')$.} 
\end{equation*}

In what follows we assume that we have extracted a subsequence (that we do not relabel) so that we have the convergences:
\begin{itemize}
\item $\sqrt{R_n} \to \sqrt{R}$ in $C([0,T];L^2(\mathbb R^d))$;

\item $R_n U_n \to M$ in $L^2(0,T;L^p_{\rm loc}(\mathbb R^d))$ for any $1 \le p < d'$;

\item $\sqrt{R_n} \to \sqrt{R}$ in $L^2(0,T;H^{1}_{\rm loc}(\mathbb R^d))$ if $\eps > 0$.
\end{itemize}
We add here that \eqref{eq_boundprecise} entails that the sequence $(R_n)_{n}$ is bounded in 
 $L^{\infty}(0,T;L^{q/2}(\mathbb R^d))$ for any $2\le q < \infty$ if
 $d=2$ and any $2 \le q \le 2^*$ if $d \ge 3$, hence it admits (up to
 the extraction of a subsequence) a weak-$*$ limit. Thanks to the
 strong convergence ({\bf c5}) of $(\sqrt{R}_n)_n$ we have that  
\begin{equation*} \label{eq_cvgcdensity} \tag{\textbf{c7}} 
R_n \rightharpoonup  R \text{ in $L^{\infty}(0,T;L^{q/2}(\mathbb R^d))-w*$ for all }
\begin{cases}
q \in [2,\infty) \text{ if } d = 2 , \\
q \in [2,2^*] \text{ if } d \ge 3 .
\end{cases}
\end{equation*}

\medskip

\paragraph{\em Step 3.}
We proceed with defining the asymptotic velocity-field $U.$ For this,
we remark first that, for arbitrary $K\subset (0,T) \times \mathbb
R^d$ there holds, for arbitrary $ 2 < q <2^*$ and $p$ such that $1/p=
1/2 + 1/q \in (1 - 1/d,1):$  
$$
\|R_n U_n\|_{L^p(K)} \le \|\sqrt{R_n}\|_{L^{q}(K)} \|\sqrt{R_n} U_n\|_{L^2((0,T) \times \mathbb R^d)}. 
$$ 
Taking $K = \{\sqrt{R} =0\} \cap \( (0,T) \times B(0,A)\)$ for arbitrary $A
>0$ we apply that $\sqrt{R_n} \mathbf{1}_{K} \to \sqrt R \mathbf 1_K =
0$ in $L^p(K)$ (by ({\bf c5})), and is bounded in $L^{r}(K)$ for
arbitrary $r \in (q,2^*)$ (by ({\bf b5})). By interpolation, we
conclude that $\sqrt{R_n} \mathbf{1}_{K} \to 0$ in $L^q(K).$
Recalling that $\|\sqrt{R}_n U_n\|_{L^2((0,T) \times \mathbb R^d)}$
remains bounded and that $R_n U_n \mathbf 1_{K} \to M$ in $L^p(K),$ we
infer that $M = 0$ on $\{\sqrt{R}=0\}.$ 
So, we set 
\[
U=
\left\{
\begin{array}{ll}
0  & \text{ on }\{\sqrt{R} = 0\},\\[8pt]
\dfrac{M}{R} & \text{ on }\((0,T) \times \mathbb R^d\) \setminus \{\sqrt{R} = 0\}.
\end{array}
\right.
\]
We note here that by construction 
\[
U = \lim_{n \to \infty} \dfrac{R_n U_n}{R_n} = \lim_{n \to \infty}
\dfrac{\sqrt{R_n} U_n}{\sqrt{R_n}}   \quad \text{ a.e. on $\((0,T)
  \times \mathbb R^d\) \setminus \{\sqrt{R} = 0\}$}. 
\]

In a similar fashion we define the asymptotic effective velocity field $W_\eps$ in the case $\eps >0$. We observe first that
\[
R_n U_n + 2\lambda(\eps) \sqrt{R_n} \nabla \sqrt{R_n}
\to M + 2\lambda(\eps) \sqrt{R} \nabla \sqrt{R} =: \bar M_\eps 
\text{ a.e.\ on } (0,T)\times \R^d, 
\]
and we have $ \bar M_\eps = 0$ on $\{\sqrt{R}=0\}.$ Hence we set
\[
W_\eps =
\left\{
\begin{array}{ll}
0  & \text{ on }\{\sqrt{R} = 0\},\\[8pt]
\dfrac{M}{R} +2\lambda(\eps) \dfrac{ \sqrt{R} \nabla \sqrt{R} }{R}&
            \text{ on }\((0,T) \times \mathbb R^d \)\setminus \{\sqrt{R} = 0\}. 
\end{array}
\right.
\]

\medskip 

\paragraph{\em Step 4.}
The last important step is to prove the strong convergence of the
sequence $(\sqrt{R_n} U_n)_{n}$ in $L^2(0,T;L^2_{\rm loc}(\mathbb
R^d)).$ In order to do so, we work with the effective velocity $
W_{\eps,n} = U_n + \lambda(\eps) \nabla \ln R_n$ (which is just equal
to $U_n$ when $\eps = 0$). We first remark that we have a.e.\
convergence of 
${R_n}\varphi_{MV}(|y|^2 + |W_{\eps,n}|^2)$. Estimate~\eqref{eq_MV_eps<nu} with
Fatou's Lemma yield 
\[
\sup_{(0,T)} \int R \varphi_{MV}(|y|^2 + |W_\eps|^2) < \infty.
\]
We may now repeat the arguments of \cite[pp.~445-446]{MelletVasseur} and \cite{AS-comp}. Namely, we first fix $A,A' >0$, and remark that 
$$
\sqrt{R_n} W_{\eps,n} = \frac{R_n U_n}{\sqrt{R_n}} + \nabla \sqrt{R_n} 
\to \frac{M}{\sqrt{R}} + \nabla \sqrt{R} \quad
\text{a.e.\ on } \{ \sqrt R \neq 0  \}, 
$$
as well as $|\sqrt{R_n} W_{\eps,n} \mathbf{1}_{|W_{\eps,n}| < A }  |
\le A \sqrt{R_n} \to 0 $ a.e.\ on $\{ \sqrt R = 0  \}$. Hence we get 

\[
\sqrt{R_n} W_{\eps,n} \mathbf{1}_{|W_{\eps,n}| < A \, \cap \,  |R_n| < A'}   \rightarrow 
\sqrt{R} W_{\eps} \mathbf{1}_{|W_{\eps}| < A \, \cap \,  |R| < A'}   
\text{ a.e. on $(0,T) \times\mathbb R^d$}.
\]
We write, for any compact $K \subset \R^d$, 
\[
\int_{0}^T \int_K |\sqrt{R_n} U_n - \sqrt{R} U |^2 
\le  C\int_{0}^T \int_K |\sqrt{R_n} W_{\eps,n} - \sqrt{R} W_{\eps} |^2 
+ C \lambda^2(\eps)\int_{0}^T \int_K |\nabla \sqrt{R_n}  - \nabla \sqrt{R} |^2
\]
and evaluate each term separately. The second term goes to $0$ thanks
to \eqref{eq_strongconvergence_R_eps}, while for the first term we
estimate 
\begin{equation}\label{eq:conv-RW}
\begin{aligned}
& \int_{0}^T \int_K |\sqrt{R_n} W_{\eps,n} - \sqrt{R} W_{\eps} |^2 \\
&
\le C \Bigg (  
 \int_{0}^T \int_K |\sqrt{R_n} W_{\eps,n} \mathbf{1}_{|W_{\eps,n}| < A \, \cap \,  |R_n| < A'}  - \sqrt{R} W_{\eps} \mathbf{1}_{|W_{\eps}| < A \, \cap \,  |R| < A'}  |^2 \\
&\qquad \qquad
+   \int_{0}^T \int_K  |\sqrt{R_n} W_{\eps,n} \mathbf{1}_{|W_{\eps,n}| \ge A}|^2 + |\sqrt{R_n} W_{\eps,n} \mathbf{1}_{|W_{\eps,n}| < A \cap |R_n |\ge A' }|^2 \\
&\qquad\qquad
 + \int_{0}^T \int_K  |\sqrt{R} W_{\eps} \mathbf{1}_{|W_{\eps}| \ge A}|^2 + |\sqrt{R} W_{\eps} \mathbf{1}_{|W_{\eps}| < A \cap |R| > A' }|^2
\Bigg).
\end{aligned}
\end{equation}
For fixed $A$ and $A'$, the first term on the right-hand side of \eqref{eq:conv-RW} converges to $0$ when $n\to \infty$, while for the second one we have, introducing $2 < q < 2^{*}$:
\begin{multline*}
\int_{0}^T \int_K  |\sqrt{R_n} U_n \mathbf{1}_{|U_n| \ge A}|^2 + |\sqrt{R_n} U_n \mathbf{1}_{|U_n| < A \cap |R_n| \ge A' }|^2\\
\begin{array}{l}
 \le \dfrac{1}{\ln(1+A^2)}\displaystyle \int_{0}^T \int_K R_n \varphi_{MV}((|y|^2 + |U_n|^2)/2 )  + \dfrac{A^2}{|A'|^{q-2}}\int_{0}^T \int_K R_n^{q}  \\[14pt]
 \le C \left( \dfrac{1}{\ln(1+A^2)} + \dfrac{A^2}{|A'|^{2-q}} \right),
\end{array}
 \end{multline*}
with a constant $C$ independent of $n.$ 
Proceeding in a similar way for the third term of \eqref{eq:conv-RW}, 
we obtain that 
\[
\limsup_{n \to \infty} \int_{0}^T \int_K |\sqrt{R_n} U_n - \sqrt{R} U |^2  
\le C \left( \dfrac{1}{\ln(1+A^2)} + \dfrac{A^2}{|A'|^{2-q}} \right),
\]
for arbitrary $A$ and $A'$, which implies the convergence 
\begin{equation} \label{eq_cvgcflux} \tag{\textbf{c8}}
\sqrt{R_n} U_n  \to \sqrt{R}U  \text{ in $L^2_{\rm loc}((0,T) \times \mathbb R^d)$}.
\end{equation} 
by letting $A' \to \infty$ and then $A \to \infty.$ We note here that, by construction 
$\sqrt{R} U = 0$ where $U =0$ in particular on the set $\{\sqrt{R}=0\}.$

We may finally combine (\textbf{c1})--(\textbf{c2})--(\textbf{c3})--(\textbf{c4})--(\textbf{c5})--(\textbf{c6})--(\textbf{c7})--(\textbf{c8}) to pass to the limit in the continuity and momentum equations satisfied by 
$(\sqrt{R_n},\sqrt{R_n}U_n)$ and their associated tensors $\mathbb S_{K,n}, \mathbb T_{N,n}$, and obtain that the different items of Definition~\ref{def_weaksol} are satisfied by the limit $(\sqrt{R},\sqrt{R}U)$ and their associated tensors $\mathbb S_{K}, \mathbb T_{N}$.
\end{proof}


\appendix

\section{On large time behavior for isentropic Euler equations}
\label{sec:serre}

In this appendix, we prove Theorem~\ref{theo:serre}: for the Euler
equation with pressure 
law $P(\rho)=\rho^{\gamma}$, $\gamma>1$, there is no such thing as a
universal asymptotic profile for the density. In addition, the
dispersion associated to global smooth solutions is not the same as in
the isothermal case. To see this, we rewrite the arguments from
\cite{Serre97}, in the simplest case in order to illustrate the above
claims. Consider on $\R^d$, $d\ge 1$,
\begin{equation}\label{eq:euler-isent0}
  \left\{
    \begin{aligned}
 & \d_t \rho+\DIV\(\rho u\)=0,\\
     &\d_t (\rho u) +\DIV(\rho u\otimes u)+\kappa\nabla
        \(\rho^\gamma\)= 0,
    \end{aligned}
\right.
\end{equation}
with $\kappa>0$ and $1<\gamma\le 1+\frac{2}{d}$. Consider the analogue
of \eqref{eq:uvFluid},
\begin{equation*}
  \rho(t,x) = \frac{1}{(1+t)^d}R\(\frac{t}{1+t},\frac{x}{1+t}\),\quad
  u(t,x) = \frac{1}{1+t}U\(\frac{t}{1+t},\frac{x}{1+t}\)
  +\frac{x}{1+t}. 
\end{equation*}
Denoting by $\si$ and $y$ the time and space variables for $(R,U)$, we
readily check that in terms of $(R,U)$, \eqref{eq:euler-isent0} is 
equivalent to
\begin{equation}\label{eq:euler-isent2}
   \left\{
    \begin{aligned}
 & \d_\si R+\DIV\(RU\)=0,\\
     &\d_\si (RU) +\DIV( R U\otimes U)+\kappa(1-\si)^{d\gamma-d-2}\nabla
        \(R^\gamma\)= 0.
    \end{aligned}
\right.
\end{equation}
Note that in the case $\gamma=1+2/d$, $(R,U)$ solves exactly
\eqref{eq:euler-isent0}. This algebraic identity can be viewed as the
counterpart of the pseudo-conformal transform in the framework of
nonlinear Schr\"odinger equations (see e.g. \cite{CazCourant}), after
Madelung transform and a semi-classical limit (see
e.g. \cite{AnMa09,AnMa12,CaDaSa12}). (Leaving out the semi-classical limit, this
shows that at least in the case $\gamma=1+2/d$,
\eqref{eq:euler-isent0} could be replaced by Korteweg equations, with
essentially the same conclusions as below.)
\smallbreak
The important remark is that the time
interval $t\in [0,\infty)$ has been compactified, since it corresponds
to $\si\in [0,1)$. Therefore, if the solution of \eqref{eq:euler-isent2}
is defined (at least) on the time interval $[0,1]$, going back to the
original unknowns yields a global solution to
\eqref{eq:euler-isent0}. 
\smallbreak

We rewrite \eqref{eq:euler-isent2} away from vacuum as:
\begin{equation}\label{eq:euler-isent3}
   \left\{
    \begin{aligned}
 & \d_\si R+\DIV\(RU\)=0,\\
     &\d_\si U +U\cdot\nabla U+\kappa(1-\si)^{d\gamma-d-2}\frac{1}{R}\nabla
        \(R^\gamma\)= 0.
    \end{aligned}
\right.
\end{equation}
Using the same change of unknown function used
to symmetrize \eqref{eq:euler-isent0} (\cite{MUK86,JYC90}), but in the
case of $(R,U)$, that is,
\begin{equation*}
  \tilde R = R^{\frac{\gamma-1}{2}},
\end{equation*}
\eqref{eq:euler-isent3} becomes
\begin{equation}\label{eq:euler-isent4}
   \left\{
    \begin{aligned}
 & \d_\si \tilde R+U\cdot \nabla \tilde R +\frac{\gamma-1}{2}\tilde R
 \DIV U=0,\\
     &\d_\si U+U\cdot\nabla U+\kappa
     \frac{2\gamma}{\gamma-1}(1-\si)^{d\gamma-d-2}\tilde R \nabla 
        \tilde R= 0.
    \end{aligned}
\right.
\end{equation}
Multiplying the second equation by the symmetric positive definite matrix
\begin{equation*}
  S(\si) =
  \frac{(\gamma-1)^2}{4\kappa\gamma}\(1-\si\)^{d+2-d\gamma}{\mathrm
    I}_d 
\end{equation*}
makes the system symmetric. 
\smallbreak

\noindent {\bf Case $\gamma=1+2/d$.}
In this case, the symmetrizer is constant. 
Using the standard results in this
framework (see e.g. \cite{Majda,Taylor3}), we infer that if for $s>d/2+1$,
$\|(\tilde R,U)\|_{H^s(\R^d)}$ is sufficiently small at $\si=0$, then
\eqref{eq:euler-isent4} has a unique solution $(\tilde R,U)\in
C([0,1];H^s(\R^d))$. By the same argument, we can actually solve
\eqref{eq:euler-isent4}  backward in time, by prescribing the data at
$\si=1$: if these data are sufficiently small, the solution satisfies $(\tilde R,U)\in
C([0,1];H^s(\R^d))$. Back to the initial unknowns, we infer
Theorem~\ref{theo:serre}. 

\smallbreak

\noindent {\bf Case $1<\gamma<1+2/d$.} 
In this case, the symmetrizer $S$ goes to zero as $\si\to 1$. Setting,
for $m>1+d/2$ an integer
\begin{equation*}
 F_m(\si) :=\sum_{0\le |\alpha|\le m}\(\|\d_y^\alpha \tilde R\|_{L^2}^2 +
  \<\d_y^\alpha U,S\d_y^\alpha U\>_{L^2,L^2}\),
\end{equation*}
it is proven in \cite{Serre97} that $F_m$ satisfies the differential
inequality
\begin{equation*}
  \frac{dF_m}{d\si}\le C F_m+C
  \(1-\sigma\)^{d\gamma/2-1-d/2}\(F_m^{3/2}+F_m^{(m+3)/2}\) .
\end{equation*}
Defining $G_m(\si)=F_m(\si)\exp(-C\si)$, we get 
  \begin{equation*}
    \frac{d G_m}{d\si}\lesssim
    \(1-\sigma\)^{d\gamma/2-1-d/2}\(G_m^{3/2}+G_m^{(m+3)/2}\) .
  \end{equation*}
Introducing
\begin{equation*}
  H(G):= \int_1^G \frac{\dd g}{g^{3/2}+g^{(m+3)/2}} ,
\end{equation*}
we have 
\begin{equation*}
  H\(G_m(\si)\) \le H\(G_m(0)\) +C_1 \int_0^\si
  \(1-s\)^{d\gamma/2-1-d/2}\dd s.
\end{equation*}
Since the last integral is convergent as $\si\to 1$ (recall that
$\gamma>1$), and
\begin{equation*}
  H\(G_m(\si)\) \le H\(G_m(0)\) +\underbrace{C_1 \int_0^1
    \(1-s\)^{d\gamma/2-1-d/2}\dd s}_{:=C_2}.
\end{equation*}
Noticing that $H(0)=-\infty$, we see that if $\|(\tilde R,U)\|_{H^m}$
is sufficiently small, then $(\tilde R,U)$ is defined up to $\si=1$
(by contradiction). Again, we can adapt this argument with
data at $\si=1$ (replace $G_m(0)$ with $G_m(1)$ in the above
estimate), and decrease time to $\si=0$, in order to infer 
Theorem~\ref{theo:serre}. Note that in starting from $\si=1$, we only assume
$(\tilde R,U)_{\mid \si=1}$ in $H^m$, with $\|\tilde R(1)\|_{H^m}$
small (not necessarily $\|U(1)\|_{H^m}$).


\section{Qualitative study of ordinary differential equations}
\label{sec:ODE}

\subsection{Universal dispersion}

We sketch the proof of Lemma~\ref{lem:taubis}, and refer to
\cite{CaGa-p} for details. The fact that under the assumptions of
Lemma~\ref{lem:taubis}, \eqref{eq:taugen} has a unique local $C^2$
solution is
an immediate consequence of Cauchy-Lipschitz Theorem. Multiplying
\eqref{eq:taugen} by $\dot \tau$ and integrating, we find
\begin{equation}\label{eq:taupoint}
  \(\dot \tau\)^2 = C+4\kappa \ln \tau,
\end{equation}
where the value
$  C= \beta^2 -4\kappa \ln \alpha$
is irrelevant for the rest of the discussion. Since the left hand
side of \eqref{eq:taupoint} is non-negative, we readily have
\begin{equation*}
  \tau(t)\ge \exp\(-\frac{C}{4\kappa}\)>0,
\end{equation*}
for all $t$ in the life-span of $\tau$. This shows that the $C^2$
solution is uniformly convex, and global in time. 
\smallbreak

Next, we note that $\tau$ grows at least linearly in time. Indeed, if
$\beta>0$, then since $\tau$ is convex, 
\begin{equation*}
  \tau(t)\ge \beta t+\alpha.
\end{equation*}
On the other hand, if $\beta\le 0$, suppose that $\tau$ is bounded,
$\tau(t)\le M$. Then \eqref{eq:taugen} yields
\begin{equation*}
  \ddot \tau(t)\ge \frac{2\kappa}{M},
\end{equation*}
hence a contradiction for $t$ large enough. Therefore, we can find
$T>0$ such that $\tau(T)\ge 1$ and $\dot \tau(T)>0$, so arguing like
above,
\begin{equation*}
  \tau(t)\ge \dot\tau(T)(t-T)+\tau(T),\quad \text{and}\quad \dot
 \tau(t)>0\quad \forall t\ge T. 
\end{equation*}
From the above discussion, there exists $T\ge 0$ such that for $t\ge
T$, $\dot \tau(t)>0$, and so \eqref{eq:taupoint} yields
\begin{equation}\label{eq:taupoint2}
  \dot \tau(t) = \sqrt{C+4\kappa \ln \tau(t)},\quad t\ge T. 
\end{equation}
Separating the variables, we have
\begin{equation*}
  \frac{\dd \tau}{\sqrt{C+4\kappa \ln \tau}}=\dd t,
\end{equation*}
and the change of variable $\sigma= \sqrt{C+4\kappa \ln \tau}$ yields
\begin{equation*}
  \int \frac{\dd \tau}{\sqrt{C+4\kappa \ln \tau}} =
  \frac{1}{2\kappa}\int e^{(\sigma^2-C)/4\kappa}\dd\sigma.
\end{equation*}
The asymptotic expansion of Dawson function (see e.g. \cite{AbSt64})
yields, in the sense of diverging integrals,
\begin{equation*}
  \int e^{\sigma^2}\dd\sigma \sim \frac{1}{2\sigma}e^{\sigma^2}.
\end{equation*}
We get
\begin{equation*}
  \frac{\tau(t)}{\sqrt{C+4\kappa \ln \tau}}\Eq t \infty t,\quad
  \text{hence}\quad
    \frac{\tau(t)}{\sqrt{4\kappa \ln \tau}}\Eq t \infty t.
\end{equation*}
We see here that the initial data of $\tau$, appearing in  the numerical
value of $C$, are irrelevant for the leading order large time behavior
of $\tau$. We readily infer 
\begin{equation*}
  \tau(t)\Eq t \infty 2t\sqrt{\kappa \ln t},\quad \dot \tau(t) \Eq t
  \infty 2\sqrt{\kappa \ln t},
\end{equation*}
where the second relation stems from the first one and
\eqref{eq:taupoint2}. 
\subsection{Perturbed dynamics}

The proof of Lemma~\ref{lem:ODE2} resume several of the above
steps. Local existence follows again from the Cauchy-Lipschitz
Theorem. Leaving out the explicit dependence upon $\eps$ and $\nu$ in
the notation, and multiplying \eqref{eq:tau-quant} by $\dot\tau$,
integration now yields
\begin{equation}\label{eq:taupoint-quant}
  \(\dot \tau(t)\)^2 = C+4\kappa \ln \tau(t) -\frac{\eps^2}{2\tau(t)^2}
  -\nu\int_0^t\(\frac{\dot \tau(s)}{\tau(s)}\)^2\dd s.
\end{equation}
Writing
\begin{equation*}
  C+4\kappa \ln \tau(t) = \(\dot \tau(t)\)^2
  +\frac{\eps^2}{2\tau(t)^2}+ \nu\int_0^t\(\frac{\dot
    \tau(s)}{\tau(s)}\)^2\dd s\ge 0,
\end{equation*}
we still have $\tau(t)\ge e^{-C/4\kappa}>0$. 
\smallbreak

Now suppose that $\tau\in L^\infty(\R_+)$. Then
\eqref{eq:taupoint-quant} and the above property imply
\begin{equation*}
  \(\dot \tau(t)\)^2+\nu\int_0^t\(\frac{\dot
    \tau(s)}{\tau(s)}\)^2\dd s\in L^\infty(\R_+),
\end{equation*}
hence
\begin{equation*}
\(\dot \tau(t)\)^2+\frac{\nu}{\|\tau\|_{L^\infty}^2}\int_0^t\(\dot
    \tau(s)\)^2\dd s\in L^\infty(\R_+).
\end{equation*}
In particular, $\int_0^\infty (\dot \tau)^2<\infty$. 
Integrating by parts,
\begin{align*}
  \int_0^t\(\dot    \tau(s)\)^2\dd s&=\tau(t)\dot\tau(t) -
  \alpha\beta-\int_0^t\tau\ddot \tau = \tau(t)\dot\tau(t) -
  \alpha\beta-\int_0^t\(2\kappa + \frac{\eps^2}{\tau^2}
    -\nu\frac{\dot \tau}{\tau}\)\\
& = \tau(t)\dot\tau(t) -
  \alpha\beta-\int_0^t\(2\kappa + \frac{\eps^2}{\tau^2}\)
  +\nu\ln\(\frac{\tau(t)}{\alpha}\). 
\end{align*}
Since $\tau$ is bounded, we
infer
\begin{equation*}
  \tau(t)\dot\tau(t) \gtrsim t-1,
\end{equation*}
hence a contradiction. Therefore, there exists $t_n\to \infty$ such
that
\begin{equation*}
  \tau(t_n)\Tend n \infty \infty. 
\end{equation*}
Now we suppose that
\begin{equation}\label{eq:intDV}
  \int_0^\infty\(\frac{\dot   \tau(s)}{\tau(s)}\)^2\dd s=\infty. 
\end{equation}
Then \eqref{eq:taupoint-quant} implies
\begin{equation}\label{eq:DVbrutale}
  4\kappa\ln \tau(t) - \(\dot \tau(t)\)^2\Tend t \infty \infty. 
\end{equation}
Integrating by parts yields
\begin{align*}
  \int_{t_n}^t  \(\frac{\dot   \tau}{\tau}\)^2 & =
 \frac{\dot\tau}{\tau}\Big|_{t_n}^t  -  \int_{t_n}^t  \dot \tau \(
   \frac{\ddot \tau}{\tau^2} -  2\frac{(\dot  \tau)^2}{\tau^3}\)\\
&= \frac{\dot\tau}{\tau}\Big|_{t_n}^t +2 \int_{t_n}^t \(\frac{\dot
  \tau}{\tau}\)^3 -\int_{t_n}^t \(2\kappa \frac{\dot \tau}{\tau^3}
  +\eps^2\frac{\dot \tau}{\tau^5}-\nu \frac{(\dot \tau)^2}{\tau^4}\)\\
& =
  \frac{\dot\tau}{\tau}+\frac{\kappa}{\tau^2}-\frac{\eps^2}{4\tau^4}\Big|_{t_n}^t 
+2 \int_{t_n}^t \(\frac{\dot  \tau}{\tau}\)^3 +\nu  \int_{t_n}^t
  \frac{(\dot \tau)^2}{\tau^4}. 
\end{align*}
In view of \eqref{eq:DVbrutale}, the above three integrated terms are
bounded. We infer
\begin{equation*}
   \int_{t_n}^t  \(\frac{\dot   \tau}{\tau}\)^2 \le C + \(2\sup_{s\ge
     t_n}\left| \frac{\dot \tau(s)}{\tau(s)}\right| +\nu\sup_{s\ge
     t_n}\frac{1}{\tau(s)^2}\) \int_{t_n}^t  \(\frac{\dot
     \tau}{\tau}\)^2. 
\end{equation*}
Now \eqref{eq:DVbrutale} yields, for $t\ge t_n\gg 1$,
\begin{equation*}
   \int_{t_n}^t  \(\frac{\dot   \tau}{\tau}\)^2 \le C + \frac{1}{2} \int_{t_n}^t  \(\frac{\dot
     \tau}{\tau}\)^2. 
\end{equation*}
This provides a contradiction with \eqref{eq:intDV}.
We infer that $\tau$ is not bounded, and 
\begin{equation*}
 \int_0^\infty\(\frac{\dot   \tau(s)}{\tau(s)}\)^2\dd s<\infty.   
\end{equation*}
But \eqref{eq:taupoint-quant} shows that for any sequence of time
along which $\tau$ goes to infinity, $(\dot \tau)^2$ also goes to
infinity. Therefore, 
\begin{equation*}
  \tau(t)\Tend t \infty \infty\quad\text{and}\quad  \dot\tau(t)\Tend t \infty \infty.
\end{equation*}
For large time, \eqref{eq:taupoint-quant} becomes
\begin{equation*}
   \(\dot \tau(t)\)^2 \Eq t \infty 4\kappa \ln \tau(t), 
\end{equation*}
and we can resume the computation of the above subsection to infer
Lemma~\ref{lem:ODE2}.

\bibliographystyle{smfplain}

\bibliography{biblio}

\providecommand{\bysame}{\leavevmode ---\ }
\providecommand{\og}{``}
\providecommand{\fg}{''}
\providecommand{\smfandname}{\&}
\providecommand{\smfedsname}{\'eds.}
\providecommand{\smfedname}{\'ed.}
\providecommand{\smfmastersthesisname}{M\'emoire}
\providecommand{\smfphdthesisname}{Th\`ese}
\begin{thebibliography}{10}

\bibitem{AbSt64}
{\scshape M.~Abramowitz {\normalfont \smfandname} I.~A. Stegun} --
  \emph{Handbook of mathematical functions with formulas, graphs, and
  mathematical tables}, National Bureau of Standards Applied Mathematics
  Series, vol.~55, For sale by the Superintendent of Documents, U.S. Government
  Printing Office, Washington, D.C., 1964.

\bibitem{LogSob}
{\scshape C.~An{\'e}, S.~Blach{\`e}re, D.~Chafa{\"{\i}}, P.~Foug{\`e}res,
  I.~Gentil, F.~Malrieu, C.~Roberto {\normalfont \smfandname} G.~Scheffer} --
  \emph{Sur les in\'egalit\'es de {S}obolev logarithmiques}, Panoramas et
  Synth\`eses [Panoramas and Syntheses], vol.~10, Soci\'et\'e Math\'ematique de
  France, Paris, 2000, With a preface by Dominique Bakry and Michel Ledoux.

\bibitem{AnMa09}
{\scshape P.~Antonelli {\normalfont \smfandname} P.~Marcati} -- {\og On the
  finite energy weak solutions to a system in quantum fluid dynamics\fg},
  \emph{Comm. Math. Phys.} \textbf{287} (2009), no.~2, p.~657--686.

\bibitem{AnMa12}
\bysame , {\og The quantum hydrodynamics system in two space dimensions\fg},
  \emph{Arch. Ration. Mech. Anal.} \textbf{203} (2012), no.~2, p.~499--527.

\bibitem{AS-comp}
{\scshape P.~Antonelli {\normalfont \smfandname} S.~Spirito} -- {\og On the
  compactness of finite energy weak solutions to the quantum {N}avier-{S}tokes
  equations\fg}, Preprint arXiv:1512.07496.

\bibitem{AMTU01}
{\scshape A.~Arnold, P.~Markowich, G.~Toscani {\normalfont \smfandname}
  A.~Unterreiter} -- {\og On convex {S}obolev inequalities and the rate of
  convergence to equilibrium for {F}okker-{P}lanck type equations\fg},
  \emph{Comm. Partial Differential Equations} \textbf{26} (2001), no.~1-2,
  p.~43--100.

\bibitem{AuHa17}
{\scshape C.~Audiard {\normalfont \smfandname} B.~Haspot} -- {\og Global
  well-posedness of the {E}uler-{K}orteweg system for small irrotational
  data\fg}, \emph{Comm. Math. Phys.} \textbf{351} (2017), no.~1, p.~201--247.

\bibitem{BDD07}
{\scshape S.~Benzoni-Gavage, R.~Danchin {\normalfont \smfandname} S.~Descombes}
  -- {\og On the well-posedness for the {E}uler-{K}orteweg model in several
  space dimensions\fg}, \emph{Indiana Univ. Math. J.} \textbf{56} (2007),
  no.~4, p.~1499--1579.

\bibitem{BiMy76}
{\scshape I.~Bia{\l}ynicki-Birula {\normalfont \smfandname} J.~Mycielski} --
  {\og Nonlinear wave mechanics\fg}, \emph{Ann. Physics} \textbf{100} (1976),
  no.~1-2, p.~62--93.

\bibitem{BD04}
{\scshape D.~Bresch {\normalfont \smfandname} B.~Desjardins} -- {\og Quelques
  mod\`eles diffusifs capillaires de type {K}orteweg\fg}, \emph{Comptes
  {R}endus {M}\'ecanique} \textbf{332} (2004), no.~11, p.~881--886.

\bibitem{Bre-De-CKL-03}
{\scshape D.~Bresch, B.~Desjardins {\normalfont \smfandname} C.-K. Lin} -- {\og
  On some compressible fluid models: {K}orteweg, lubrication, and shallow water
  systems\fg}, \emph{Comm. Partial Differential Equations} \textbf{28} (2003),
  no.~3-4, p.~843--868.

\bibitem{Bru-Me10}
{\scshape S.~Brull {\normalfont \smfandname} F.~M\'ehats} -- {\og Derivation of
  viscous correction terms for the isothermal quantum {E}uler model\fg},
  \emph{ZAMM Z. Angew. Math. Mech.} \textbf{90} (2010), no.~3, p.~219--230.

\bibitem{CaDaSa12}
{\scshape R.~Carles, R.~Danchin {\normalfont \smfandname} J.-C. Saut} -- {\og
  Madelung, {G}ross-{P}itaevskii and {K}orteweg\fg}, \emph{Nonlinearity}
  \textbf{25} (2012), no.~10, p.~2843--2873.

\bibitem{CaGa-p}
{\scshape R.~Carles {\normalfont \smfandname} I.~Gallagher} -- {\og Universal
  dynamics for the defocusing logarithmic {S}chr{\"o}dinger equation\fg},
  \emph{Duke Math. J.}, to appear. Archived as
  \url{https://hal.archives-ouvertes.fr/hal-01398526}.

\bibitem{CazCourant}
{\scshape T.~Cazenave} -- \emph{Semilinear {S}chr\"odinger equations}, Courant
  Lecture Notes in Mathematics, vol.~10, New York University Courant Institute
  of Mathematical Sciences, New York, 2003.

\bibitem{JYC90}
{\scshape J.-Y. Chemin} -- {\og Dynamique des gaz \`a masse totale finie\fg},
  \emph{Asymptotic Anal.} \textbf{3} (1990), no.~3, p.~215--220.

\bibitem{CFY17}
{\scshape Y.~Chen, E.~Fan {\normalfont \smfandname} M.~Yuen} -- {\og Explicitly
  self-similar solutions for the {E}uler/{N}avier-{S}tokes-{K}orteweg equations
  in {$R^N$}\fg}, \emph{Appl. Math. Lett.} \textbf{67} (2017), p.~46--52.

\bibitem{Fei04}
{\scshape E.~Feireisl} -- \emph{Dynamics of viscous compressible fluids},
  Oxford Lecture Series in Mathematics and its Applications, vol.~26, Oxford
  University Press, Oxford, 2004.

\bibitem{Gis-VV15}
{\scshape M.~Gisclon {\normalfont \smfandname} I.~Lacroix-Violet} -- {\og About
  the barotropic compressible quantum {N}avier-{S}tokes equations\fg},
  \emph{Nonlinear Anal.} \textbf{128} (2015), p.~106--121.

\bibitem{Gra98}
{\scshape M.~Grassin} -- {\og Global smooth solutions to {E}uler equations for
  a perfect gas\fg}, \emph{Indiana Univ. Math. J.} \textbf{47} (1998), no.~4,
  p.~1397--1432.

\bibitem{GrSe97}
{\scshape M.~Grassin {\normalfont \smfandname} D.~Serre} -- {\og Existence de
  solutions globales et r\'eguli\`eres aux \'equations d'{E}uler pour un gaz
  parfait isentropique\fg}, \emph{C. R. Acad. Sci. Paris S\'er. I Math.}
  \textbf{325} (1997), no.~7, p.~721--726.

\bibitem{Jungel}
{\scshape A.~J\"ungel} -- {\og Global weak solutions to compressible
  {N}avier-{S}tokes equations for quantum fluids\fg}, \emph{SIAM J. Math.
  Anal.} \textbf{42} (2010), no.~3, p.~1025--1045.

\bibitem{LiWa06}
{\scshape T.~Li {\normalfont \smfandname} D.~Wang} -- {\og Blowup phenomena of
  solutions to the {E}uler equations for compressible fluid flow\fg}, \emph{J.
  Differential Equations} \textbf{221} (2006), no.~1, p.~91--101.

\bibitem{Lio98}
{\scshape P.-L. Lions} -- \emph{Mathematical topics in fluid mechanics. {V}ol.
  2}, Oxford Lecture Series in Mathematics and its Applications, vol.~10, The
  Clarendon Press, Oxford University Press, New York, 1998, Compressible
  models, Oxford Science Publications.

\bibitem{Majda}
{\scshape A.~Majda} -- \emph{Compressible fluid flow and systems of
  conservation laws in several space variables}, Applied Mathematical Sciences,
  vol.~53, Springer-Verlag, New York, 1984.

\bibitem{MUK86}
{\scshape T.~Makino, S.~Ukai {\normalfont \smfandname} S.~Kawashima} -- {\og
  Sur la solution \`a support compact de l'\'equation d'{E}uler
  compressible\fg}, \emph{Japan J. Appl. Math.} \textbf{3} (1986), no.~2,
  p.~249--257.

\bibitem{MelletVasseur}
{\scshape A.~Mellet {\normalfont \smfandname} A.~Vasseur} -- {\og On the
  barotropic compressible {N}avier-{S}tokes equations\fg}, \emph{Comm. Partial
  Differential Equations} \textbf{32} (2007), no.~1-3, p.~431--452.

\bibitem{Serre97}
{\scshape D.~Serre} -- {\og Solutions classiques globales des \'equations
  d'{E}uler pour un fluide parfait compressible\fg}, \emph{Ann. Inst. Fourier}
  \textbf{47} (1997), p.~139--153.

\bibitem{Serre2017}
\bysame , {\og Long-time stability in systems of conservation laws, using
  relative entropy/energy\fg}, \emph{Arch. Ration. Mech. Anal.} \textbf{219}
  (2016), no.~2, p.~679--699.

\bibitem{Simon87}
{\scshape J.~Simon} -- {\og Compact sets in the space {$L\sp p(0,T;B)$}\fg},
  \emph{Ann. Mat. Pura Appl. (4)} \textbf{146} (1987), p.~65--96.

\bibitem{Taylor3}
{\scshape M.~Taylor} -- \emph{Partial differential equations. {III}}, Applied
  Mathematical Sciences, vol. 117, Springer-Verlag, New York, 1997, Nonlinear
  equations.

\bibitem{VasseurYuInventiones}
{\scshape A.~F. Vasseur {\normalfont \smfandname} C.~Yu} -- {\og Existence of
  global weak solutions for 3{D} degenerate compressible {N}avier-{S}tokes
  equations\fg}, \emph{Invent. Math.} \textbf{206} (2016), no.~3, p.~935--974.

\bibitem{VasseurYu}
\bysame , {\og Global weak solutions to the compressible quantum
  {N}avier-{S}tokes equations with damping\fg}, \emph{SIAM J. Math. Anal.}
  \textbf{48} (2016), no.~2, p.~1489--1511.

\bibitem{Vi03}
{\scshape C.~Villani} -- \emph{Topics in optimal transportation}, Graduate
  Studies in Mathematics, vol.~58, American Mathematical Society, Providence,
  RI, 2003.

\bibitem{Xin98}
{\scshape Z.~Xin} -- {\og Blowup of smooth solutions of the compressible
  {N}avier-{S}tokes equation with compact density\fg}, \emph{Comm. Pure Appl.
  Math.} \textbf{51} (1998), p.~229--240.

\bibitem{Yuen12}
{\scshape M.~Yuen} -- {\og Self-similar solutions with elliptic symmetry for
  the compressible {E}uler and {N}avier-{S}tokes equations in {$R^N$}\fg},
  \emph{Commun. Nonlinear Sci. Numer. Simul.} \textbf{17} (2012), no.~12,
  p.~4524--4528.

\end{thebibliography}

\end{document}